\newtheorem{thrm}{Theorem}[section]
\newtheorem*{thrm*}{Theorem}
\newtheorem{lemm}[thrm]{Lemma}
\newtheorem*{lemm*}{Lemma}
\newtheorem{prop}[thrm]{Proposition}
\newtheorem*{prop*}{Proposition}
\newtheorem*{corl*}{Corollary}
\newtheorem*{claim*}{Claim}
\theoremstyle{definition}
\newtheorem{defn}[thrm]{Definition}
\newtheorem*{defn*}{Definition}
\newtheorem*{exmp*}{Example}
\newtheorem*{rmrk*}{Remark}
\renewcommand{\qed}{\hfill \mbox{\raggedright \rule{0.1in}{0.1in}}}
\DeclareMathOperator{\tr}{tr}
\newcommand{\overbar}[1]{\mkern 1.5mu\overline{\mkern-1.5mu#1\mkern-1.5mu}\mkern 1.5mu}
\newcommand{\ov}[1]{\overbar{#1}}
\newcommand{\un}[1]{\underaccent{\bar}{#1}}
\newcommand{\bds}[1]{\boldsymbol{#1}}
\newcommand{\unwt}[1]{\underaccent{\sim}{#1}}
\newcommand{\upwt}[1]{\accentset{\sim}{#1}}
\numberwithin{equation}{section}
\begin{document}

\title{\bf{Strategic Growth with Recursive Preferences:\\
Decreasing Marginal Impatience}}

\author[1]{Luis Alcal\'{a}
\thanks{Corresponding author. Email: lalcala@unsl.edu.ar}}

\author[2]{Fernando Tohm\'{e}}

\author[2]{Carlos Dab\'{u}s}

\affil[1]{Departamento de Matem\'{a}tica, Universidad Nacional de San Luis\newline
San Luis, Argentina}

\affil[2]{Departamento de Econom\'{i}a, Universidad Nacional del Sur  and CONICET\newline
Bah\'{i}a Blanca, Buenos Aires, Argentina}

\date{ }

\maketitle

\begin{abstract}
\noindent We study the interaction between strategy, heterogeneity and growth in a two-agent model of capital accumulation. Preferences are represented by recursive utility functions with decreasing marginal impatience. The stationary equilibria of this dynamic game are analyzed under two alternative information structures: one in which agents precommit to future actions, and another one where agents use Markovian strategies. In both cases, we develop sufficient conditions to prove the existence of equilibria and characterize their stability properties. The precommitment case is characterized by monotone convergence, but Markovian equilibria may exhibit nonmonotonic paths, even in the long-run.\medskip

\noindent\textbf{Key words}: Recursive preferences, Strategic growth, Decreasing marginal impatience, Precommitment, Markovian strategies
\medskip

\noindent\textbf{JEL classification}: C61, C62, C73, O40, D90 
\medskip
\end{abstract}
\thispagestyle{empty}

\newpage

\section{Introduction}
\label{sec:intro}

We study the interaction between strategy, heterogeneity and growth in a two-agent model of capital accumulation, where preferences are represented by recursive utility functions with decreasing marginal impatience and agents receive different shares of income. The stationary equilibria of this dynamic game are studied under two alternative information structures: the precommitment or open-loop model and the Markovian or closed-loop model. In the first case, agents commit to an entire sequence of choices at the beginning of their interaction. In the second model, strategies specify a course of action based on the history of past choices.

Compared with neoclassical models of optimal growth, representative agents models with recursive preferences have certain features that better explain the behavior of actual economies. For instance, \citet{mantel99} proved the existence of finitely many steady states that are locally asymptotically stable with separate regions of monotone convergence. This model also shows dependence on initial conditions, and the possibility of falling into poverty traps. But this is far from a realistic depiction of the diversity found in growth experiences. On one hand, economic variables do not always exhibit monotone growth  paths. On the other hand, heterogeneous time preferences cannot be easily reduced to a representation with a single agent. These two features are not unrelated: the \emph{mutual influence} between economic agents makes capital paths interdependent, therefore more prone to nonmonotonic behavior.

Although the model we develop is provided with a number of structural assumptions, substantial technical challenges are involved. We exploit similarities between the current framework and models of nonoptimal infinite-horizon economies with capital accumulation. To paraphrase \citet{greenhuff95}: The usual problem in these environments is that agents fail to take into account how their actions influence the behavior of other agents. In order to solve an individual's dynamic program, one needs to know the equilibrium law of motion governing the aggregate state, but to know this in turn requires knowledge of  individual decision rules. Proving existence of equilibria can then be problematic, since it can be difficult to establish that the individual and aggregate laws of motion coincide along an equilibrium path.

Two additional elements complicate the resolution of our model. One of them is the presence of heterogeneity in preferences and income shares across agents; the other one is the interdependence of strategy spaces. Despite these difficulties, we are able to prove the existence of stationary equilibria under precommitment strategies and Markovian strategies, and give a characterization of these equilibria based on general functional forms. Thus, an important contribution of this paper is to develop a method for solving stationary equilibria in a class of capital accumulation games with recursive preferences, particularly for the case of equilibria in Markovian strategies (see Section \ref{sec:Markov}). The method we develop results from a combination of different approaches, which include the  dynamical systems approach, operator methods, and fixed point theory.

More specifically, we introduce sufficient conditions to prove the existence of stationary equilibria and analyze their stability properties. It is illustrative to compare this equilibria with the equilibria that agents would obtain by themselves (the ``autarky'' setting). This approach gives the model a frame of reference and also allows a heuristic analysis of how changes in the players' strategy spaces affect equilibrium outcomes. Assuming that one agent is uniformly more impatient than the other, the stationary precommitment level of capital lies below the  corresponding levels in autarky. In turn, the stationary level of capital in a Markovian equilibrium is lower than in the precommitment case, indicating that there is a price to be paid when agents monitor their actions along the entire equilibrium path.

In a close paper to ours, \citet{camachoetal13} also analyze a strategic growth model with recursive preferences, in which discount factors are a function of capital held by each agent, but preferences are homogeneous. Output shares are determined by the proportion of aggregate capital that each agent holds. Hence, these shares not only differ among agents, but also vary over time. An important contribution of their paper is that it dispenses with the assumption of concavity of the value function and applies the methods of supermodular games to determine the existence of stationary equilibria. It is also shown, via numerical examples, that open-loop equilibria ``tend to be symmetric'' and strategic interaction removes the multiplicity and indeterminacy that may arise in single agent models. Furthermore, the presence of time-varying output shares does not generate asymmetric equilibria. 

The plan of the paper is as follows. The next subsection   reviews the literature, both on recursive preferences in infinite horizon capital accumulation models, and on the methods used to solve related optimization problems. Section \ref{sec:environment} introduces the problem and main assumptions. An analysis of the space of feasible strategies available to the agents is given in Section \ref{sec:stratspace}, together with a characterization of the autarky problem. Section \ref{sec:precommit} studies equilibria in precommitment strategies, including the problems of existence and stability. In Section \ref{sec:Markov}, the analysis is dedicated to Markovian strategies, again covering different aspects ranging from the existence of equilibria to the issue of their stability. Section \ref{sec:conclusions} concludes.

\subsection{Related Literature}

Since Ramsey's seminal paper, optimal economic growth has been represented by a single agent that maximizes a time-additive  intertemporal welfare function with a constant rate of time preference. This basic construction has remained the fundamental building block of models of optimal growth. An alternative to constant discounting was to consider welfare functions with a variable rate of time preference. The theoretical foundation of this approach was laid out by \citet{koopmans60}. Its effects on optimal growth were analyzed by \citet{bealskoop69} and \citet{iwai72}, and, more recently, \citet{mantel93,mantel95,mantel99}, \citet{beckermull97}, \citet{das03}, \citet{stern06}, and \citet{eroletal11}. The key difference among these approaches lies on how the rate of time preference is modeled as a function of consumption or capital.

Models that assume increasing marginal impatience were pioneered by \citet{uzawa68} with later contributions by  \citet{lucasstokey84}, \citet{epstein87}, and \citet{obstfeld90}. These models have a certain appeal, because their equilibria are globally stable, independently of initial conditions. On the other hand, following a suggestion of \citet{fisher30} in his \emph{Theory of Interest}, \citet{mantel99} postulated that the degree of impatience (measured as the rate of time preference) should be a decreasing function of consumption and thus, indirectly, of income. The lower the income level, the higher the sacrifice of postponing present consumption in exchange for future consumption. Recent theoretical models and experimental evidence support \emph{decreasing marginal impatience} as the most plausible assumption.\footnote{A different way of including variable discount rates has been largely explored in the literature on behavioral economics. For example, \citet{laibson97} postulates that time inconsistent intertemporal choices may lead agents to have hyperbolic discounting, rather than geometric discounting. Preferences in this paper are time invariant and satisfy the stationarity postulate from \citet{koopmans60}, so they are time consistent.}

As mentioned earlier, representative agent models with decreasing marginal impatience generally produce multiple steady states and exhibit dependence on initial conditions, which includes the possibility of yielding poverty traps.\footnote{These properties were actively investigated by the late Rolf Mantel \citep{mantel67,mantel93,mantel95,mantel99}. Although Mantel's work along these lines is far less known than his celebrated results on the Sonnenschein-Mantel-Debreu Theorem in General Equilibrium Theory, it follows from the same foundational concerns. For a detailed treatment, see \citet{tohme06}.} The recent work by \citet{eroletal11} assumes that the rate of time preference is a function of the capital stock. There it is shown that a poverty trap may arise even with a convex technology. Also, under a convex-concave technology, the optimal path can exhibit global convergence to a unique stationary point.

Another basic framework for studying optimal growth is the Ramsey equilibrium model with constant but heterogeneous discount rates. Under homogeneous preferences, the steady state reflects the degree of impatience of the representative agent. But this approach has an important shortcoming, since the capital distribution in the economy is indeterminate. In the case of many agents having different discount rates, stationary equilibria are characterized by what is known as \emph{Ramsey's conjecture}, where only the most patient owns the capital stock of the economy and all others have none and live off their wage incomes. The distribution of capital is determinate, but it ends up being concentrated in a single agent. A thorough review of this literature can be found in \citet{becker06} and the references therein.\footnote{See also \citet{beckeretal14} for an up-to-date study of turnpike properties, monotone income conditions, and exceptions to the Ramsey equilibrium in discrete time. They point out that continuous time models give different predictions, another issue that was explored in \citet{mantel99}.} 

In our model, the inclusion of recursive preferences and strategic interaction implies a non-degenerate stationary capital distribution, so the Ramsey equilibrium result is avoided. An alternative in this direction has been given by \citet{sorger02,sorger08}, assuming that agents eventually realize that they exercise some market power in the capital market, but still behave competitively in the labor market. This leads to open-loop equilibria in which all agents in the economy own positive amounts of capital. \citet{pichlersorger09} extend the analysis to Markovian equilibria. However, they are not able to obtain analytical results. Based on numerical simulations, they show that a Markov perfect equilibrium can be affected by preferences, while the open-loop equlibrium is determined by technology alone. They also find some evidence of nonmonotonic behavior. 

\citet{docknishim04} develop a two-agent model with  externalities in the form of knowledge spillovers and   heterogeneous technological parameters. Assuming certain specific functional forms for preferences and technologies, they show that dynamics under open-loop and Markov equilibria are remarkably similar: they are characterized by a balanced growth path, a unique stable equilibrium, and monotonic behavior of aggregate variables. However, when external effects are relatively large, there exist oscillatory paths. On the other hand, growth rates are larger in Markovian equilibria than in open-loop equilibria.

Another model of strategic growth is given in  \citet{docknishim05}, who also study a two-agent economy with a nonconcave production function. They find that in an open-loop equilibrium, there is a threshold value for the initial capital stock such that above the threshold, the solution converges to an efficient steady state, but below the threshold the capital stock converges to zero. Moreover, there exists a unique interior globally stable Markov perfect equilibrium. Results for the latter are also obtained with specific functional forms for preferences and technology.

Recent work by \citet{drugeonwign15} has brought a novel perspective on this problem, by introducing temptation motives and self-control costs in a standard Ramsey model, building on the preference representation developed by \citet{gulpesendorf04}. It is shown that when agents are heterogeneous in their discount factors and temptation motives, equilibria are characterized by nondegenerate long-run distributions of consumption and wealth. In fact, the most patient individual may end up having lower consumption levels than the other agents, if their temptation motives are sufficiently low.

In terms of the formal methods applied to finding equilibria in this paper, we build on an early contribution by \citet{rosen65}, who studies the existence and uniqueness of $n$-person generalized concave games. Each player's strategy space depends on the strategy of the other players, like in the abstract economies of Arrow-Debreu and McKenzie. Existence follows from a generalization of Kakutani's fixed-point theorem, while uniqueness is ensured by a property called diagonal strict concavity.

The dynamical systems approach was developed by \citet{beckerfoias98} to analyze the Ramsey equilibrium under  geometric discounting and heterogeneous discount rates. The construction of an equilibrium is based on the turnpike property. In particular, a functional equation is derived from the Euler condition of the most patient agent, which defines a nonlinear operator with a monotonicity property. The solutions are found applying a method called \emph{implicit programming}. \citet{beckerfoias07} have also applied these methods to solve a model of strategic growth and find similar dynamics. 

We have also mentioned \citet{greenhuff95}, who addressed the existence problem of a nonoptimal stochastic growth model. They follow some innovative work of \citet{coleman91}, which has been extended by \citet{coleman97,coleman00}, \citet{dattmirmreff02}, and \citet{mirmorref08}. Formally, the aggregate law of motion is a state variable that determines, together with individual state variables, individual decision rules. This interaction between individual and aggregate behavior is embedded into the Euler equation of the representative agent. Imposing consistency between individual and aggregate law of motions, and working recursively with the Euler equation, yields a monotonic operator. We take some insights from this approach, and extend some results to include heterogeneity and strategic behavior.

\section{The Environment}
\label{sec:environment}

Time is discrete and agents are infinitely lived. The economy considered here consists of two agents, one storable good that can be consumed or used as capital for production, and a single productive unit. We assume, as in an Arrow-Debreu-McKenzie private ownership economy, that agents receive a constant proportion of output as income shares. The idea is to characterize an equilibrium path for this  economy as a function of preferences, technology, initial capital, and income shares. 

Throughout the paper, variables and parameters indexed by $i$  correspond to agent $i$, while those indexed by $j$ correspond to the other agent, which is always interpreted as $j \neq i = 1,2$. Each agent $i$ has a prospective utility function over nonnegative consumption sequences $\bds{c}^i:=\{c_t^i\}_{t=0}^\infty$ given by 
\begin{equation}
\label{eq:Widef}
\mathrm{W}_0^i(\bds{c}^i)=\sum_{t=0}^{\infty} \left(\prod_{s=0}^{t-1}
\alpha_i(c_s^i)\right) u_i(c_t^i),
\end{equation}
where $u_i(c_t^i)$ is agent $i$'s instantaneous utility of consuming $c_t^i$, and the real-valued function $\alpha_i(c_t^i)$ is the subjective factor of time preference. Then, the one-period discount rate for agent $i$ can be defined as $\rho_i(c_t^i):= 1/\alpha_i(c_t^i)-1$. 

The following assumptions describe prospective utility in terms of instantaneous utility and the discount factor for each $i=1,2$.
\begin{enumerate}[label=(U\arabic*)]
\item\label{asm:alpha} $\alpha_i:\mathbb{R}_+ \to \mathbb{R}_{++}$ is continuous, strictly increasing and strictly concave on $\mathbb{R}_+$, twice continuously differentiable on $\mathbb{R}_{++}$ with $\alpha_i''(c) < 0 < \alpha_i'(c)$ for all $c > 0$,
\item\label{asm:alphap} $\alpha_i(0)>0$; $\sup_{c > 0}\ \alpha_i(c) = \ov{\alpha}_i$, for some constant $ 0 < \ov{\alpha}_i < 1$,
\item\label{asm:utility} $u_i:\mathbb{R}_+ \to \mathbb{R}_+$ is continuous, strictly increasing and strictly concave on $\mathbb{R}_+$; and twice continuously differentiable on $\mathbb{R}_{++}$ with $u_i''(c) < 0 < u_i'(c)$ for all $c > 0$,
\item\label{asm:uprime} $u_i(0)=0$; $\lim_{c \to 0^+} u_i'(c)=+\infty$; $\lim_{c \to +\infty} u_i'(c)=0$,
\end{enumerate}

Since $\alpha_i(c_t^i)$ is increasing in consumption, it can also be thought as being increasing in income. Hence, $\rho_i(c_t^i)$ decreases with consumption and income, satisfying the property of decreasing impatience. Under these assumptions, prospective utility defines a welfare path $\{\mathrm{W}_t^i\}_{t=0}^\infty$ by solving the following difference equation:
\begin{align}
\label{eq:Wirec}
&\mathrm{W}_t^i = u_i(c_t) + \alpha_i(c_t)\mathrm{W}_{t+1}^i, &t=0,1,\ldots
\end{align}
given the initial condition $(c_0^i,\mathrm{W}_0^i)$. The function on the right-hand side of \eqref{eq:Wirec} is called an \emph{utility  aggregator}, in this case defined as $\mathrm{U}^i(c,\mathrm{W}^i) := u_i(c) + \alpha_i(c)\mathrm{W}^i$. This utility aggregator is continuous, increasing in its arguments, and satisfies a Lipschitz condition of order one. Moreover, assumptions \ref{asm:alpha}--\ref{asm:uprime} imply that successive approximations of $\mathrm{U}^i$ converge uniformly  to a unique prospective utility.\footnote{Proofs of these statements can be found in \citet{boyd90}.}

Agents are endowed with $(k_0^i,k_0^j)$ units of capital at the beginning of the first period. There is a single productive unit in the economy where both agents pool their capital to produce output. Aggregate capital is denoted by $k_t:=k_t^i+k_t^j$ for every $t$. Technology consists of a neoclassical aggregate production function $f$ which satisfies the following:
\begin{enumerate}[label=(T\arabic*)]
\item\label{asm:prodf} $f:\mathbb{R}_+ \to \mathbb{R}_+$ is continuous, strictly increasing and strictly concave on $\mathbb{R}_+$; and twice continuously differentiable on $\mathbb{R}_{++}$ with $f''(k) < 0 < f'(k)$ for all $k > 0$,
\item\label{asm:prodfp} $f(0)=0$; $\lim_{k \to 0^+} f'(k) > \max\left\{\alpha_i(0)^{-1},\alpha_j(0)^{-1}\right\}$; $\lim_{k \to +\infty} f'(k) < 1$.
\end{enumerate}
It is immediately verified that these properties imply the existence of a maximum sustainable capital level, i.e., there is a $k_m > 0$ such that $f(k_m)=k_m$, $f(k) < k$ for all $k > k_m$, and $f(k) > k$ for all $k < k_m$ . This fact and \ref{asm:prodfp}, together with \ref{asm:alpha}--\ref{asm:alphap}, guarantee the existence of an interior stationary equilibrium. For simplicity, we further assume that output $f(k)$ is net of depreciation and maintenance costs.

The single good in this economy is used both for consumption and accumulation. Agents make investment decisions separately. Every period $t$, each agent receives a share $\theta^i \geq 0$ of total output. Income shares are constant over time and satisfy $\theta^i + \theta^j=1$. Without loss of generality, we assume $\theta^i > \theta^j$ throughout the paper. It is further assumed that these shares are public information, fully enforceable and not open to renegotiation.

In a sequential formulation of the problem, each agent chooses infinite sequences of consumption and savings to maximize prospective utility, given by \eqref{eq:Widef}, taking   the initial conditions and the actions of the other player as given. Then, the sequential program for agent $i$  has the form
\begin{align}
\label{eq:SPi}
\max_{\left\{c_t^i,k_{t+1}^i\right\}_{t=0}^\infty}\quad
  &\sum_{t=0}^{\infty} \left(\prod_{s=0}^{t-1}
\alpha_i(c_s^i)\right) u_i(c_t^i)\tag{SP$_i$}\\[5pt]
\text{s.t. }\quad & k_{t+1}^i \leq \theta^i f(k_t^i+k_t^j)-c_t^i,&t=0,1,\ldots \nonumber\\[5pt]
&c_t^i \geq 0,\ k_{t+1}^i \geq 0, & t=0,1,\ldots\nonumber
\\[5pt]
&k_0^i \geq 0,\  \text{given},\nonumber
\end{align}
where $\{k_t^j\}_{t=0}^\infty$ is the strategy followed by the  other player. 

A capital path $\bds{k}^i:=\{k_t^i\}_{t=0}^\infty$ is said to be \emph{feasible from $(k_0^i,k_0^j) \geq 0$}, given $\bds{k}^j \geq 0$, if it satisfies
\begin{equation*}
0 \leq k_{t+1}^i \leq \theta^if(k_t^i+k_t^j), \qquad t=0,1\ldots
\end{equation*}
Let $\Pi^i\big[(k_0^i,k_0^j);\bds{k}^j\big]$ be the set of all feasible capital paths for player $i$. A consumption path $\bds{c}^i:=\{c_t^i\}_{t=0}^\infty$ is said to be \emph{feasible from $(k_0^i,k_0^j)$} if there is a sequence $\bds{k}^i$ in $\Pi^i\big[(k_0^i,k_0^j);\bds{k}^j\big]$ such that
\begin{equation*}
0 \leq c_t^i \leq \theta^if(k_t^i+k_t^j)-k_{t+1}^i, \qquad t=0,1,\ldots
\end{equation*}

Since preferences are strictly monotone, the first  inequality restriction in \eqref{eq:SPi} will be binding. Moreover, Inada conditions guarantee an interior solution for $c_t^i$ and $k_{t+1}^i$, for all $t$. As a result, \eqref{eq:SPi} can be written as
\begin{equation*}
\max_{\{k_{t+1}^i\}_{t=0}^\infty \in \Pi^i[(k_0^i,k_0^j);\bds{k}^j]}\ \sum_{t=0}^\infty \left[\prod_{s=0}^{t-1}
\alpha_i\left(\theta^i f(k_t^i+k_t^j)-k_{t+1}^i\right)\right]
u_i\left(\theta^i f(k_t^i+k_t^j)-k_{t+1}^i\right).
\end{equation*}
Given that the set $\Pi^i$ is compact in the product topology defined over the space of bounded non-negative sequences $\bds{k^i}$, and the objective function is continuous, this problem is well defined. In a strategic environment, the solution to this problem critically depends on the information structure of the game. Throughout the paper, we restrict our attention to pure strategies under two alternative information structures: precommitment or open-loop strategies, and Markovian or closed-loop strategies. For the case of precommitment, optimal strategies are given by individual capital paths, $\boldsymbol{k}^{i}$ and $\boldsymbol{k}^j$, and the aggregate capital path that results from these decisions yields an equilibrium in open-loop strategies, or simply, an open-loop equilibrium.\footnote{In what follows, we use the terms open-loop equilibrium and precommitment equilibrium interchangeably.} In a stationary Markovian equilibrium, strategies are functions of the state of the system. The state contains all the relevant information from past consumption and savings decisions, as well as the evolution of the state in previous periods. 

As will be shown below, there are certain advantages of using a recursive formulation of the sequential problem \eqref{eq:SPi} for developing the solution methods introduced in this paper. At the beginning of any period, individual choices are basically constrained by \emph{aggregate capital} $k_t$, since past accumulation decisions are of no consequence for current or future payoffs. In the language of the recursive approach, aggregate capital represents the state variable of the dynamic optimization problem, while consumption levels  $c_t^i$ and $c_t^j$ are the decision variables. If the feasible sets for individual consumption are carefully constructed, then a feasible consumption profile $(c_t^i,c_t^j)$ immediately implies that the level of aggregate capital next period $k_{t+1}$ is also feasible. Thus, keeping track of individual capital holdings becomes unnecessary. To see this, note that adding up the resource constraints from each \eqref{eq:SPi}, and taking nonnegativity constraints into account, yields
\begin{equation*}
c_t^i + c_t^j \leq f(k_t) - k_{t+1}, \qquad t=0,1,\ldots,
\end{equation*}
which will be satisfied with equality along an optimal path. Also, if the equilibrium is interior, we have that $c_t^i>0$, $c_t^j > 0$, and $0 < k_{t+1} < f(k_t)$, and this in turn implies $0 < c_t^i  < f(k_t) - c_t^j$, for all $t$.

Given the existence of a maximum sustainable capital $k_m$, the state space can be restricted to the closed interval $K:=[0,k_m]$. At the beginning of each period, player $i$  chooses $c^i$ over the correspondence $[0,f(k)-c^j]$ to solve a dynamic program (DP$_i$) of the form
\begin{align}
\label{eq:mainOL}
v_i(k)=& \max_{0 \leq c^i \leq f(k)-c^j}\
 \Big\{u_i(c^i)+\alpha_i(c^i)\,v_i(f(k)-c^i-c^j):\
 k \in K,\ 0 \leq c^j \leq f(k)\Big\},\tag{DP$_i$}
\end{align}
and player $j$ solves an analogous program. A solution to \eqref{eq:mainOL} yields player $i$'s best response to a strategy $c^j$, for a given initial value $k$ of the state variable. Clearly, the strategy space of each player depends on the strategy followed by the other player, so this is a game that belongs to the class of generalized games in the sense of \citet{arrowdebreu54} and \citet{mckenzie59}, also known as games of strategic dependence.\footnote{Another branch of the literature --typically related to differential games-- refers to this class as ``games with coupled constraints.'' This terminology is also used by \citet{rosen65}.} Furthermore, as players share a common technology to produce output, such interdependence is a natural element of the game. This, however, creates some technical difficulties which are discussed next.

\section{The Strategy Space}
\label{sec:stratspace}

Let $S^i \subset \mathbb{R}_+$ and $S^j \subset \mathbb{R}_+$ be the sets of all feasible strategies for players $i$ and $j$, respectively. A feasible strategy profile $c:=(c^i,c^j)$ is an element of the product space $S:=S^i \times S^j$. In the first part of this discussion we consider a fixed value $k \in K$, so we can apply the static framework developed by \citet{rosen65} for concave $n$-person games. For this problem to be well defined, admissible strategies must be selected from a convex, compact subset $S \subset \mathbb{R}_+^2$, and  each player's payoff function $u_i$ has to be continuous in $(c^i,c^j)$ and concave in $c^i$ for each fixed value of the strategy followed by the other player. If $P^i$ denotes the projection of $S$ on $\mathbb{R}_+$, Rosen assumes that preferences are defined over the set $P = P^i \times P^j$, which is a superset of $S$. A shortcoming of using $P$ as the strategy space is that it will contain infeasible strategies.

\citet{banksduggan04} extended the result obtained by Rosen  and solved the problem of having a strategy space which includes infeasible strategies. The set $S$ is considered a primitive of the problem, instead of being a construction, and  it is assumed to be nonempty, convex and compact. Preferences are defined only on the set of feasible strategy profiles, and are represented by a continuous and quasi-concave utility function $u_i: S \to \mathbb{R}$. Provided that the feasibility correspondence of each agent is lower hemicontinuous, the authors prove the existence of a Nash equilibrium from standard results.\footnote{See also \citet{carlson02} on this particular subject.}

What is the feasible strategy space in our problem? To attempt an answer to this question, first consider the following auxiliary program related to \eqref{eq:SPi} for a single player, say $i$. We will refer to this case as player $i$'s \emph{autarky} decision problem, and denote it by $\mathcal{A}^i$. Essentially, this is a world where only player $i$ exists and output is not shared with anyone else, a problem that can be seen as a single-agent optimal growth model with recursive preferences. This class of models has been analyzed at length by \citet{mantel93,mantel95,mantel99}, and more recently by \citet{eroletal11}, among others. We freely borrow their results to characterize $\mathcal{A}^i$. The reader is referred to those papers for details.

Let $\tilde{v}_i:K \to \mathbb{R}$ denote the value function corresponding to the optimization problem associated with $\mathcal{A}^i$, with a Bellman equation given by
\begin{equation*}
\tilde{v}_i(k)=\max_{0 \leq x^i \leq f(k)}\
 \Big\{u_i(x^i)+\alpha_i(x^i)\tilde{v}_i(f(k)-x^i)\Big\}.
\end{equation*}
This value function satisfies $\tilde{v}_i(0)=0$ and $\tilde{v}_i(k)> 0$ if $k > 0$. Moreover, it is strictly increasing and continuous on $[0,f(k)]$ and differentiable on $(0,f(k))$. Given $k_0 > 0$, an optimal consumption path  satisfies the Euler equation and envelope condition
\begin{align}
\label{eq:Euleraut}
&u_i'(c_t^i)+\alpha_i'(c_t^i)\tilde{v}_i(f(k_t)-c_t^i)=\alpha_i(c_t^i)\tilde{v}_i'(f(k_t)-c_t^i), & t=0,1,\ldots,
\\[5pt]
\label{eq:envelaut}
&\tilde{v}_i'(k_t)=[u_i'(c_t^i)+\alpha_i'(c_t^i)\tilde{v}_i(f(k_t)-c_t^i)]f'(f(k_t)-c_t^i), & t=0,1,\ldots,
\end{align}
and $k_t$ evolves according to $k_{t+1} = f(k_t) - c_t^i$, for all $t$. For any optimal stationary point $k$, it must be true that $f(k) \geq k$, so that $k \in [0,k_m]$. The continuity of $\alpha_i$, $u_i$, $f$, and $\tilde{v}_i$ implies that if $k = \lim_{t \to \infty} k_t$, then
\begin{equation}
\label{eq:SPaut}
\alpha_i(f(k)-k)f'(k) = 1
\end{equation}
must hold.
Since $0 < \alpha_i(f(k)-k) < 1$ for every $k \in K$, it follows that $f'(k) > 1$, so the equilibrium is dynamically efficient.

Our assumptions on preferences \ref{asm:alpha}--\ref{asm:uprime} and technology \ref{asm:prodf}--\ref{asm:prodfp} imply the existence of a stationary point $k \in (0,k_m)$. Moreover, if the left-hand side of \eqref{eq:SPaut} is strictly decreasing on some interval $I \subseteq (0,k_m)$ containing $k$, such stationary point is locally stable and locally isolated, two properties that will prove useful in this and later sections. Differentiating the left-hand side of  \eqref{eq:SPaut} and making appropriate substitutions, this condition is equivalent to
\begin{equation}
\label{eq:eta1st}
\frac{\alpha_i'(f(k)-k)}{\alpha_i(f(k)-k)}(f'(k)-1)+\frac{f''(k)}{f'(k)} < 0,
\end{equation}
for all $k \in I$. To obtain meaningful and comparable results, we will further assume that discount factors satisfy $\alpha_i(c^i) \geq \alpha_j(c^i)$ for all $c^i=c^j \in f(I)$ such that $c^i=c^j=c$. Hence, if they consume equal amounts, agent $i$ is uniformly more patient than player $j$ over $f(I)$. This assumption will be kept for the remaining of the paper, adjusting the definition of $I$ as needed.

By looking at \eqref{eq:SPaut}, a stationary point $k$ can be thought as the intersection of the marginal product curve $f'(k)$ and the graph of $(\alpha_i(f(k)-k))^{-1}$, as   depicted in Figure~\ref{fig:stateqAU}. Points A and B represent the autarky equilibrium for player $i$ and player $j$, respectively. Although these equilibria, by definition, are independent from each other, they can be used as reference points to characterize equilibria with strategic interactions. Based on geometrical properties, there is an intuitive interpretation that can be obtained from the stability condition \eqref{eq:eta1st}. Taking into account that $(1/\alpha_i)'=-\alpha_i'/\alpha_i^2$ and substituting \eqref{eq:SPaut} into \eqref{eq:eta1st}, the latter can be written as
\begin{equation}
\label{eq:slopesaut}
\left|f''(k)\right|> \left|\left(\alpha_i(f(k)-k)^{-1}\right)'\right|.
\end{equation}
This means that, in absolute value, the slope of the marginal product curve must be larger than the slope of the curve $(1/\alpha_i)$ taken as a function of $k$.\footnote{In terms of the subjective discount rate, \eqref{eq:slopesaut} can be expressed as $\left|f''(k)\right| > \rho_i(f(k)-k)\left|\rho_i'(f(k)-k)\right|$.} The fact that $1/\alpha_i \leq 1/\alpha_j$, together with the strict concavity of $f$, immediately implies that the agent with a higher level of patience has more capital in autarky, as can be seen in Figure \ref{fig:stateqAU}. 

\begin{figure}[!h]
\centering
\includegraphics[width=0.75\textwidth]{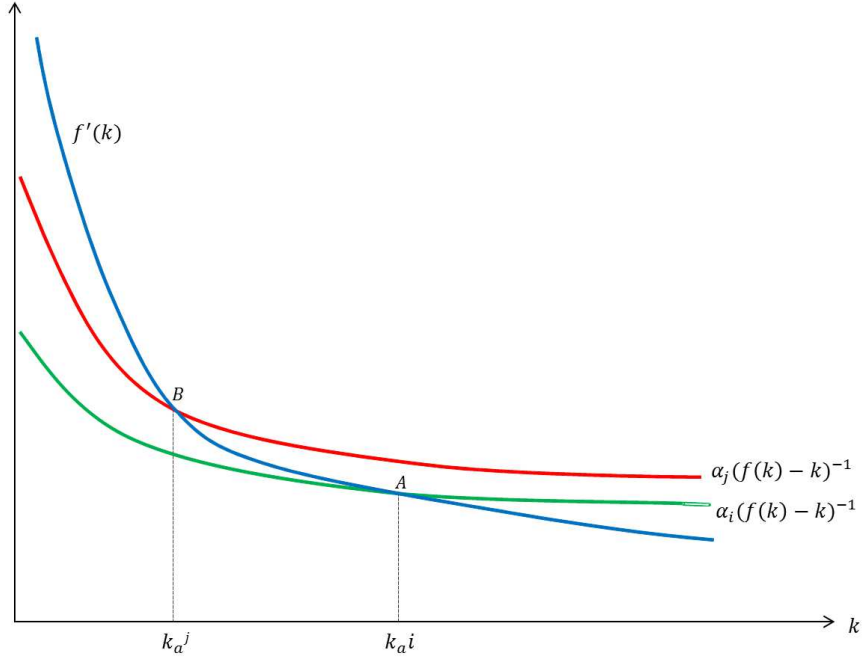}
\caption{Autarky equilibria}
\label{fig:stateqAU}
\end{figure}

Note that the assumption on discount factors can be interpreted as a ``sorting condition'' in the sense that marginal utility of (current) consumption is higher for the less patient player. If we identify an agent's ``type'' with impatience, the higher type consumes more (i.e., accumulates less capital) in equilibrium. This property extends to strategic equilibria, as will be shown in later sections. The characterization of  autarkic equilibria is summarized in the following lemma, whose proof can be found in Appendix \ref{sec:appSP}.

\begin{lemm}
\label{lem:stateqAU}
Suppose that $I$ is a subinterval of $K$ such that $\alpha_j(c^j) \leq \alpha_i(c^i)$ holds for all $c^i=c^j=c \in f(I)$. Then, there exist stationary equilibria $k_a^i,k_a^j \in I$ for programs $\mathcal{A}^i$ and $\mathcal{A}^j$, respectively. Moreover, $0 < k_a^j \leq k_a^i$ and the corresponding  equilibrium levels of consumption satisfy $0 < c_a^j \leq c_a^i$. 
\end{lemm}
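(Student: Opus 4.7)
The plan is to reduce each autarky problem to a one-dimensional root-finding problem, invoke the intermediate value theorem to produce the stationary points, and then exploit the sorting hypothesis $\alpha_i \geq \alpha_j$ together with the monotonicity that underlies the stability condition \eqref{eq:eta1st} to rank the two equilibria. For each agent $\ell \in \{i,j\}$ I define
\[
h_\ell(k) := \alpha_\ell\bigl(f(k)-k\bigr)\,f'(k), \qquad k \in (0,k_m],
\]
so that, by \eqref{eq:SPaut}, any interior stationary capital for $\mathcal{A}^\ell$ is a zero of $h_\ell - 1$.

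For existence, I would check the boundary behaviour of $h_\ell$ on $(0,k_m]$. As $k\downarrow 0$, $f(k)-k\to 0$ and $f'(k)\to f'(0^+) > \alpha_\ell(0)^{-1}$ by \ref{asm:prodfp}, hence $\lim_{k\downarrow 0} h_\ell(k) > 1$. At $k=k_m$, $f(k_m)=k_m$ gives $h_\ell(k_m) = \alpha_\ell(0)\,f'(k_m)$; strict concavity of $f$ applied to $0 = f(0) < f(k_m) + f'(k_m)(0-k_m)$ yields $f'(k_m) < 1$, and together with $\alpha_\ell(0) < \ov{\alpha}_\ell < 1$ from \ref{asm:alphap}, this gives $h_\ell(k_m) < 1$. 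Continuity of $h_\ell$ on $(0,k_m]$ and the intermediate value theorem then produce an interior zero $k_a^\ell \in (0,k_m)$; the stability condition \eqref{eq:eta1st}, assumed to hold on $I$, ensures that $h_\ell$ is strictly decreasing on $I$, so the stationary point is locally isolated and can be taken inside $I$.

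To obtain the ordering $k_a^j \leq k_a^i$, I use the sorting hypothesis $\alpha_i(c) \geq \alpha_j(c)$ for all $c\in f(I)$, which gives $h_i(k) \geq h_j(k)$ for every $k\in I$ with $f(k)-k\in f(I)$. Evaluating at $k_a^j$ gives $h_i(k_a^j) \geq h_j(k_a^j) = 1 = h_i(k_a^i)$, and since $h_i$ is strictly decreasing on $I$ by the stability condition, this forces $k_a^j \leq k_a^i$. For the consumption comparison, dynamic efficiency at each equilibrium yields $f'(k_a^\ell) = \alpha_\ell(c_a^\ell)^{-1} > \ov{\alpha}_\ell^{-1} > 1$, so both stationary capitals lie in the region where $k\mapsto f(k)-k$ has positive derivative $f'(k)-1$. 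Hence $k_a^j \leq k_a^i$ implies $c_a^j = f(k_a^j)-k_a^j \leq f(k_a^i)-k_a^i = c_a^i$, and strict positivity of $c_a^\ell$ follows from $k_a^\ell \in (0,k_m)$, where $f(k_a^\ell) > k_a^\ell$.

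The main obstacle I anticipate is bookkeeping the domain on which the sorting hypothesis applies: because $\alpha_i \geq \alpha_j$ is only assumed on $f(I)$, the inequality $h_i \geq h_j$ used in the comparison step requires that the stationary points, and the associated consumption levels $f(k_a^\ell)-k_a^\ell$, lie in $I$ and $f(I)$ respectively. This is handled by choosing $I$ as in the discussion preceding the lemma, large enough that the stability condition \eqref{eq:eta1st} holds on $I$ and $f(I)$ contains the consumption levels supported by the equilibria found via the IVT; once this is secured, the remaining steps are continuity, a single monotonicity argument for $h_i$, and the elementary observation that $f(k)-k$ is increasing on the dynamically efficient region.
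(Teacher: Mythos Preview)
Your proof is correct, but the ordering step takes a different route from the paper's. You deduce $k_a^j \leq k_a^i$ by invoking the stability condition \eqref{eq:eta1st} to make $h_i$ strictly decreasing on $I$, then compare $h_i(k_a^j) \geq 1 = h_i(k_a^i)$. The paper instead argues by contradiction using only concavity of $f$: if $k_a^i < k_a^j$, then $f'(k_a^i) > f'(k_a^j)$ forces $\alpha_i(c_a^i) < \alpha_j(c_a^j)$, which together with the sorting hypothesis $\alpha_i \geq \alpha_j$ implies $c_a^i < c_a^j$; but then the mean value theorem gives a point $x \in (k_a^i,k_a^j)$ with $f'(x) < 1$, contradicting $f'(k_a^j) > 1$ and concavity. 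The paper's argument is slightly more robust in that it does not rely on the stability condition holding throughout $I$, only on the primitives \ref{asm:alpha} and \ref{asm:prodf}; your argument is more direct once that extra monotonicity is in hand, and it has the side benefit of making the local isolation of the equilibria explicit. The existence step and the consumption-ordering step are essentially the same in both proofs.
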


Next, we present an intuitive argument for the construction of the strategy space that makes use of autarky equilibria. For this, fix $k \in (0,k_m)$ in \eqref{eq:Euleraut} and define the functions $LHS_i(c^i;f(k)):= u_i'(c^i)+\alpha_i'(c^i)\tilde{v}_i(f(k)-c^i)$, and $RHS_i(c^i;f(k)):= \alpha_i(c^i)\tilde{v}_i'(f(k)-c^i)$, for all $c^i > 0$. Clearly, $LHS_i$ is strictly decreasing with $LHS_i(f(k);\cdot)=u_i(f(k))+\alpha_i'(f(k))\tilde{v}_i(0) > 0$ and, by the Inada condition in \ref{asm:uprime}, $\lim_{c^i \to 0^+} LHS_i(c^i;\cdot)=+\infty$. On the other hand, $RHS_i$ is strictly increasing with $RHS_i(0;\cdot)=\alpha_i(0)\tilde{v}_i'(f(k)) > 0$, and
$\lim_{c^i \to f(k)^-} RHS_i(c^i;\cdot)=+\infty$, given that $\tilde{v}_i' \to +\infty$ as $k \to 0^+$, which follows from the fact that $\tilde{v}_i(0)=0$ and assumptions  \ref{asm:alphap} and \ref{asm:uprime}. These conditions guarantee the existence of an equilibrium consumption level $d^i \in (0,f(k))$ at which the graphs of $LHS_i$ and $RHS_i$ intersect, so $d^i$ is implicitly determined by 
\begin{equation}
\label{eq:EulerU}
u_i'(d^i)+\alpha_i'(d^i)\tilde{v}_i(f(k)-d^i)=\alpha_i(d^i)\tilde{v}_i'(f(k)-d^i).
\end{equation}
An equilibrium for the autarky case corresponds to point $D$ in Figure \ref{fig:stratspace}.

\begin{figure}[h!]
\centering
\includegraphics[width=0.9\textwidth]{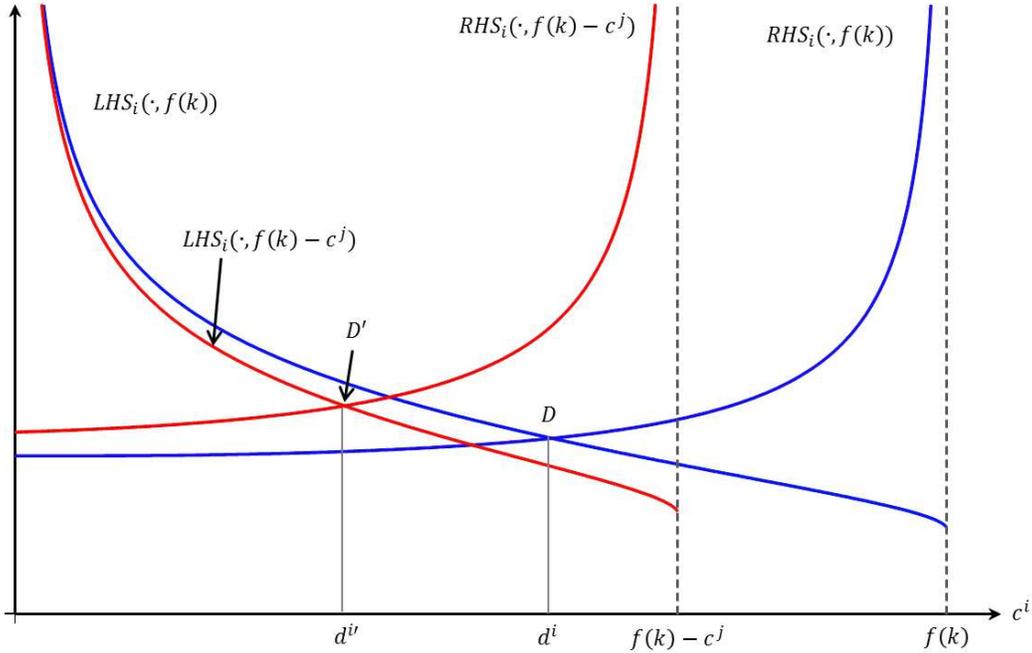}
\caption{The strategy space}
\label{fig:stratspace}
\end{figure}

Now assume that player $j$ enters the game and chooses a  consumption level satisfying $0 < c^j < \theta^j f(k)$. The irruption of player $j$ into the scene has two fundamental effects on player $i$'s program: first, $i$'s total output share falls from one to $\theta^i < 1$; second, the available choices of consumption and saving for player $i$ are more restrictive. Ignoring second-order effects, this can be seen in the graph as horizontal shifts of the $LHS_i$ and $RHS_i$ curves, and also a shift of the vertical asymptote of $RHS_i$ at $f(k)$ to the left to $f(k)-c^j$. As a result, there is an unambiguous decrease in consumption from $d^i$ to $d^{i'}$, which is depicted as point $D'$ in Figure \ref{fig:stratspace}. Note that we have assumed $0 < c^i < f(k)-c^j$ to obtain feasible consumption choices.

Slightly abusing notation, we can define an implicit function $d_i(k)$ from \eqref{eq:EulerU} for all $k \in K$, which is increasing over that interval. For a fixed level of $k$, provided that $0 \leq d_i(k) \leq \theta^if(k)$ on $K$, the strategy space for an individual player can be defined as $S^i:=[0,d_i(k)]$, a convex compact subset of $\mathbb{R}_+$. Then, preferences defined over the product space $S := S^i \times S^j$ exclude infeasible strategies. Now, if $k$ is allowed to vary, there is no guarantee that the resulting map will be convex in $k$. One possibility is to define $S^i$ as the convex hull of this set over $K$. Alternatively, and keeping in mind transitional dynamics, defining an individual strategy space as $S^i:=[0,d_i(k_m)]$ also seems to be a reasonable compromise.

\section{Precommitment Strategies}
\label{sec:precommit}

The game in consideration is a triplet $\mathcal{P}:=\left(N,S,\{\mathrm{W}_0^i\}_{i \in N}\right)$, where $N=\{1,2\}$ is the set of players, $S:=\prod_{i \in N} S^i$ the strategy space, and $\mathrm{W}_0^i$ the payoff function for each player $i \in N$, given by \eqref{eq:Widef}. A precommitment or open-loop strategy for the game $\mathcal{P}$ is a sequence of actions that depend only on the initial state and the date. Both players simultaneously commit at the beginning of the game to a completely specified list of actions to be played without any possibility of revision during the entire course of the game. Given an initial condition $(k_0^i,k_0^j) \geq 0$, each agent chooses an optimal consumption path, taking the strategy of the other player as given, seeking to maximize the discounted sum of instantaneous utilities.

\subsection{Nash Equilibrium}
\label{sec:nasheq}

Our approach to construct equilibria is close in spirit to the one developed by \citet{rosen65} for $n$-person concave games, extended to a dynamic setting. A similar methodology was also applied by \citet{beckerfoias98,beckerfoias07}, who refer to this as the \emph{dynamical system approach}, to solve and characterize an optimal growth problem. One of the main characteristics of this approach is that it works with Euler equations rather than value functions directly. 

Let $\mathrm{W}_0^i(\bds{c}^i|\bds{c}^j)$ denote the  discounted utility that player $i$ obtains at $t=0$ from following a strategy $\bds{c}^i$ when player $j$'s strategy is given by $\bds{c}^j$. A Nash equilibrium is defined next.

\begin{defn}
\label{def:nasheq}
A feasible strategy profile $(\bds{c}^{i*},\bds{c}^{j*}) \in \Pi^i \times \Pi^j$ is a \emph{Nash equilibrium} for the game in  precommitment strategies $\mathcal{P}$ if
\begin{equation*}
\mathrm{W}_0^i(\bds{c}^{i*}|\bds{c}^{j*}) \geq \mathrm{W}_0^i(\bds{c}^i|\bds{c}^{j*}), \qquad \text{for each } \bds{c}^i \in \Pi^i\ \text{and}\ j \neq i =1,2.
\end{equation*}
\end{defn}

Consider the modified sequential problem for player $i$
\begin{align}
\label{eq:SPiprime}
\max_{\{c_t^i,k_{t+1}\}_{t=0}^\infty}\quad &\sum_{t=0}^{\infty} \left(\prod_{s=0}^{t-1}
\alpha_i(c_s^i)\right) u_i(c_t^i)\tag{SP$_i'$}\\[5pt]
\text{s.t. }\quad & c_t^i \leq f(k_t)-c_t^j-k_{t+1},&t=0,1,\ldots\nonumber\\[5pt]
&c_t^i \geq 0,\ k_{t+1} \geq 0, & t=0,1,\ldots\nonumber
\\[5pt]
&k_0 \geq 0,\ \bds{c}^j:=\{c_t^j\}_{t=0}^\infty\ \text{given}\nonumber.
\end{align}
A solution to \eqref{eq:SPiprime} in open-loop strategies will be constructed from the necessary first-order conditions for an interior equilibrium, the Euler equation and envelope condition
\begin{align}
\label{eq:EulerOL}
&u_i'(c_t^i)+\alpha_i'(c_t^i)v_i(f(k_t)-c_t^i-c_t^j)= \alpha_i(c_t^i)v_i'(f(k_t)-c_t^i-c_t^j),\\[5pt]
\label{eq:envelOL}
&v_i'(k_t) = \left[u_i'(c_t^i)+\alpha_i'(c_t^i)v_i(f(k_t)-c_t^i-c_t^j)\right]f'(k_t),
\end{align}
respectively. The second-order condition for optimality is given by
\begin{align*}
u_i''(c_t^i)+\alpha_i''(c_t^i)v_i(k_{t+1})-2\alpha_i'(c_t^i)v_i'(k_{t+1})+\alpha_i(c_t^i)v_i''(k_{t+1}) \leq 0, \quad \text{and}\\[5pt]
v_i''(k_t) = \left[u_i''(c_t^i)+\alpha_i''(c_t^i)v_i(k_{t+1})\right]f'(k_t)^2 + \left[u_i'(c_t^i)+\alpha_i'(c_t^i)v_i(k_{t+1})\right]f''(k_t).
\end{align*}
Our assumptions on preferences and technology imply that $v_i' > 0$ and $v_i''< 0$, hence each player's value function $v_i(k)$, $i=1,2$, is strictly increasing and strictly concave for all $k \in K$.

To ease the notation burden, we define the functions $U^i: S^i \times S^j \times K \to \mathbb{R}$, and $W^i: S^i \times S^j \times K \to \mathbb{R}$ by
\begin{align}
\label{eq:Uisupdef}
U^i(c_t^i,c_t^j,k_t) &:= u_i'(c_t^i)+\alpha_i'(c_t^i)v_i(f(k_t)-c_t^i-c_t^j),\\[5pt]
\label{eq:Wisupdef}
W^i(c_t^i,c_t^j,k_t) &:= u_i''(c_t^i)+\alpha_i''(c_t^i)
v_i(f(k_t)-c_t^i-c_t^j).
\end{align}
Then, an interior solution for the game $\mathcal{P}$ is a feasible sequence $\{(c_t^i,c_t^j,k_t)\}_{t=0}^\infty$ characterized by the following three conditions
\begin{subequations}
\label{eq:focOL}
\begin{align}
\label{eq:focOLi}
U^i(c_t^i,c_t^j,k_t)&= \alpha_i(c_t^i)U^i(c_{t+1}^i,c_{t+1}^j,k_{t+1})f'(k_{t+1}),\\[5pt]
\label{eq:focOLj}
U^j(c_t^j,c_t^i,k_t)&= \alpha_j(c_t^j)U^j(c_{t+1}^j,c_{t+1}^i,k_{t+1})f'(k_{t+1}),\\[5pt]
\label{eq:rcOL}
k_{t+1} &= f(k_t)-c_t^i-c_t^j.
\end{align}
\end{subequations}

Note that $v_i'(k_t)=U^i(c_t^i,c_t^j,k_t)f'(k_t)$, hence the partial derivatives of $U^i$ are given by
\begin{subequations}
\label{eq:partialU}
\begin{align}
\label{eq:partialU1}
U_1^i(c_t^i,c_t^j,k_t)&=W^i(c_t^i,c_t^j,k_t)-\alpha_i'(c_t^i)U^i(c_{t+1}^i,c_{t+1}^j,k_{t+1})f'(k_{t+1}),
\\[5pt]
\label{eq:partialU2}
U_2^i(c_t^i,c_t^j,k_t)&=-\alpha_i'(c_t^i)U^i(c_{t+1}^i,c_{t+1}^j,k_{t+1})f'(k_{t+1}), \\[5pt]
\label{eq:partialU3}
U_3^i(c_t^i,c_t^j,k_t)&=\alpha_i'(c_t^i)U^i(c_{t+1}^i,c_{t+1}^j,k_{t+1})f'(k_{t+1}),
\end{align}
\end{subequations}
where $U_l^i$ denotes de partial derivative of $U^i$ with respect to its $l$-th argument. 

By the implicit function theorem, the first-order conditions \eqref{eq:focOL} for an equilibrium can be written as
\begin{subequations}
\label{eq:FOCOL}
\begin{align}
\label{eq:FOCOLi}
&U^i(c_t^i,c_t^j,k_t)=\alpha_i(c_t^i)\,U^i(F^i(c_t^i,c_t^j,k_t),F^j(c_t^j,c_t^i,k_t),
F^k(c_t^i,c_t^j,k_t))f'(F^k(c_t^i,c_t^j,k_t)),\\[5pt]
\label{eq:FOCOLj}
&U^j(c_t^j,c_t^i,k_t)=\alpha_j(c_t^j)
\,U^j(F^j(c_t^j,c_t^i,k_t),F^i(c_t^i,c_t^j,k_t),
F^k(c_t^i,c_t^j,k_t))f'(F^k(c_t^i,c_t^j,k_t)),
\end{align}
\end{subequations}
for some differentiable functions $F^i:S^i \times S^j \times K \to S^i$ and $F^j:S^j \times S^i \times K \to S^j$, where $F^k:S^i \times S^j \times K \to K$ is given by
\begin{equation}
\label{eq:Fk}
F^k(c_t^i,c_t^j,k_t):=f(k_t)-c_t^i-c_t^j.
\end{equation}
The map $F^i$ indicates agent $i$ how much to consume next period, given  current period's consumption profile $(c_t^i,c_t^j)$ and capital stock $k_t$, and similarly for $F^j$, whereas $F^k$ is the aggregate resource restriction. We use this formulation of the problem to implicitly define a discrete dynamical system. Therefore, the equilibrium conditions 
\begin{align*}
c_{t+1}^i&=F^i(c_t^i,c_t^j,k_t),\\[5pt]
c_{t+1}^j&=F^j(c_t^j,c_t^i,k_t),\\[5pt] 
k_{t+1} &= F^k(c_t^i,c_t^j,k_t),
\end{align*}
define a map $\Phi$ from $S^i \times S^j \times K$ into itself, which can also be written as
\begin{equation}
\label{eq:Phi}
(c_{t+1}^i,c_{t+1}^j,k_{t+1})=\Phi(c_t^i,c_t^j,k_t),
\end{equation}
for all $t$. An equilibrium path can then be defined in terms of this map.

\begin{defn}
\label{def:eqpathOL}
An \emph{equilibrium path} for the game $\mathcal{P}$ is a feasible sequence $\{(c_t^i,c_t^j,k_t)\}_{t=0}^\infty$ such that $(c_t^i,c_t^j,k_t) \in S^i \times S^j \times K$ for all $t$, and $\{(c_{t+1}^i,c_{t+1}^j,k_{t+1})\}_{t=0}^\infty=
\{\Phi^t(c_0^i,c_0^j,k_0)\}_{t=0}^\infty$,
where $\Phi^t$ is the $t$-th iterate of $\Phi$.
\end{defn}

\subsection{Stationary Equilibrium}
\label{sec:stateqOL}

It is easily verified that if $\Phi(\ov{c}^i,\ov{c}^j,\ov{k})=(\ov{c}^i,\ov{c}^j,\ov{k})$ for some $t$, this implies $c_t^i=c_{t+1}^i=\ov{c}^i$, $c_t^j=c_{t+1}^j=\ov{c}^j$, and $k_t=k_{t+1}=\ov{k}$, for all $t$. Hence, a fixed point of $\Phi$ on $S^i \times S^j \times K$ corresponds to a stationary point of the dynamic system given by \eqref{eq:Phi}. The latter, together with \eqref{eq:focOL}, imply that a stationary point $(\ov{c}^i,\ov{c}^j,\ov{k})$ is defined by
\begin{subequations}
\label{eq:stptOL}
\begin{align}
\label{eq:stptOLi}
1&=\alpha_i(\ov{c}^i)f'(\ov{k})\\[5pt]
\label{eq:stptOLj}
1&=\alpha_j(\ov{c}^j)f'(\ov{k}),\\[5pt]
\label{eq:stptOLk}
\ov{k}&=f(\ov{k})-\ov{c}^i-\ov{c}^j.
\end{align}
\end{subequations}

Local properties of the nonlinear system \eqref{eq:Phi} near a hyperbolic stationary point can be examined based on the \emph{stable manifold theorem}. Before proceeding, we need some preliminary calculations. From \eqref{eq:partialU}, the partial derivatives of $U^i$ evaluated at $(\ov{c}^i,\ov{c}^j,\ov{k})$ are given by 
$U_1^i =W^i-(\alpha_i'/\alpha_i)U^i$,
$U_2^i = -(\alpha_i'/\alpha_i)U^i$, and $U_3^i =(\alpha_i'/\alpha_i)U^if'$, where $U^i=u_i'+\alpha'v_i > 0$ and $W^i:=u_i''+\alpha_i''v_i < 0$. Differentiating \eqref{eq:FOCOLi} and \eqref{eq:FOCOLj} we obtain, after appropriate substitutions, the following system of equations in the unknowns $F_l^i$ with $l=1,2,3$ and $j \neq i=1,2$,
\begin{align}
\label{eq:systOL}
\begin{bmatrix}
\omega_i & & \delta_i\\[5pt]
\delta_j & & \omega_j
\end{bmatrix}
\begin{bmatrix}
F_1^i & F_2^i & F_3^i \\[5pt]
F_2^j & F_1^j & F_3^j
\end{bmatrix}
&=
\begin{bmatrix}
\omega_i-\eta_i & -\eta_i           & \eta_i f' \\[5pt]
-\eta_j         & \omega_j - \eta_j & \eta_j f'
\end{bmatrix},
\end{align}
where
\begin{equation}
\label{eq:notatioOL}
\omega_i:=\frac{\alpha_i'}{\alpha_i}-\frac{W^i}{U^i}>0, \quad
\delta_i:= \frac{\alpha_i'}{\alpha_i}>0, \quad \eta_i:=\frac{\alpha_i'}{\alpha_i}(f'-1)+\frac{f''}{f'}.
\end{equation}
Note the relation between $\eta_i$ and equation \eqref{eq:eta1st}, which determines the stability of the autarky equilibrium for each player. These conditions have a central role in the upcoming analysis. 

The determinant of the coefficient matrix in \eqref{eq:systOL} is given by
$\Delta_0:=\omega_i\omega_j-\delta_i\delta_j > 0$, so we can solve the system for $F_l^i$ and $F_l^j$, $l=1,2,3$, which yields
\begin{subequations}
\label{eq:Fls}
\begin{align}
\label{eq:F1}
F_1^i&=\frac{1}{\Delta_0}\left[\omega_i\omega_j-(\eta_i\omega_j-\delta_i\eta_j)\right], &
F_1^j&=\frac{1}{\Delta_0}\left[\omega_i\omega_j-(\omega_i\eta_j-\eta_i\delta_j)\right],\\[5pt]
\label{eq:F2}
F_2^i&=-\frac{1}{\Delta_0}\left[\delta_i\omega_j+(\eta_i\omega_j-\delta_i\eta_j)\right],&
F_2^j&=-\frac{1}{\Delta_0}\left[\omega_i\delta_j+(\omega_i\eta_j-\eta_i\delta_j)\right],\\[5pt]
\label{eq:F3}
F_3^i&=\frac{1}{\Delta_0}\left(\eta_i\omega_j-\delta_i\eta_j\right)f',&
F_3^j&=\frac{1}{\Delta_0}\left(\omega_i\eta_j-\eta_i\delta_j\right) f'.
\end{align}
\end{subequations}
There is a nice symmetry in these formulas: any partial derivative of $F^j$ can be obtained by switching subscripts in the corresponding partial derivative of $F^i$, and viceversa.

It is of interest to determine the signs of the equations in \eqref{eq:Fls} to gain some intuition on the model and analyze strategic interactions. For this, having a framework of reference can be quite useful. Note that in the limiting case where $\alpha_i$ and $\alpha_j$ approach constant functions, the current model will tend to behave like a model with constant discounting, at least in a qualitative sense. In other words, if we take $\alpha_i'\to 0$, the conditions given in \eqref{eq:notatioOL} will approximate to
\begin{equation*}
\omega_i=-u_i''/u_i' > 0, \quad \delta_i=0,\quad \text{and} \quad \eta_i = f''/f' < 0, 
\end{equation*}
and analogously for $\alpha_j' \to 0$. 

Using a model similar to ours, but assuming preferences with fixed heterogeneous discount factors, \citet{beckerfoias07} proved that these conditions lead to an asymptotic equilibrium where the most patient player holds all the capital in the economy, and the equilibrium capital path is eventually monotonic, the so called turnpike property. The novelty of their result is that optimal paths satisfy the turnpike property \emph{with or without strategic interaction}. Our strategy will be to keep the parameter values\footnote{This is a slight abuse of terminology, since $\delta_i$, $\omega_i$ and $\eta_i$ are not exactly parameters, but expressions derived from the model's primitives evaluated at a stationary point $(\ov{c}^i,\ov{c}^j,\ov{k})$. If we were to assume specific functional forms for the fundamentals, they would ultimately depend on certain parameters.} relatively close to the case of constant discounting, i.e., $\omega_i > 0$, $\eta_i < 0$, and, if needed for comparison purposes, $\delta_i$ will be taken to be small, but not zero. In any circumstance, the assumption that discount factors satisfy $\alpha_i \geq \alpha_j$ will remain unchanged.

\subsection{Local Regularity and Cross-Marginal Impatience}
\label{sec:LRCMI}

Note that the sign of all derivatives of $F^i$ in \eqref{eq:Fls} depend on the term $(\eta_i\omega_j - \delta_i\eta_j)$. For certain combinations of these parameters, we are able to determine some of those signs. In fact, they depend on two factors which we identify with two properties called ``local regularity'' and ``cross-marginal impatience,''
\begin{align}
\eta_i\omega_j-\delta_i\eta_j &= \left[\frac{\alpha_i'}{\alpha_i}(f'-1)+\frac{f''}{f'}\right]\left(\frac{\alpha_j'}{\alpha_j}-\frac{W^j}{U^j}\right)-\left(\frac{\alpha_i'}{\alpha_i}\right)\left[\frac{\alpha_j'}{\alpha_j}(f'-1)+\frac{f''}{f'}\right],
\nonumber\\[5pt]
\label{eq:crossimp}
&=\underbrace{\left[\frac{\alpha_i'}{\alpha_i}(f'-1)+\frac{f''}{f'}\right]}_\text{local regularity}\left(-\frac{W^j}{U^j}\right)+\underbrace{\left(\frac{\alpha_j'}{\alpha_j}-\frac{\alpha_i'}{\alpha_i}\right)}_\text{\shortstack{cross-marginal \\ impatience}}\left(\frac{f''}{f'}\right).
\end{align}
In what follows we give formal definitions and characterize these concepts.

\begin{defn}
A stationary equilibrium $(\ov{c}^i,\ov{c}^j,\ov{k}) \in S^i \times S^j \times K$ for the game $\mathcal{P}$ satisfies \emph{local regularity} (LR) if there is an open rectangle $I^i \times I^j \times I^k \subset S^i \times S^j \times K$ containing $(\ov{c}^i,\ov{c}^j,\ov{k})$ such that
\begin{equation}
\label{eq:LRdef}
\frac{\alpha_i'(c^i)}{\alpha_i(c^i)}\left(f'(k)-1\right)+\frac{f''(k)}{f'(k)}<0, \qquad i=1,2,
\end{equation}
holds for each $(c^i,c^j,k) \in I^i \times I^j \times I^k$.
\end{defn}

Local regularity is equivalent to $\eta_i < 0$, $i=1,2$, which is now a familiar condition. Loosely speaking, LR states that a stationary equilibrium for the game $\mathcal{P}$ lies in a neighborhood of each player's autarky equilibrium, in a region where each of the latter are locally stable. This intuition is confirmed in Section \ref{sec:discussOL} with two results. In Proposition~\ref{prp:localreg}, it is shown that $\ov{k}$ lies below the levels of autarky equilibria $k_a^i$ and $k_a^j$. On the other hand, Proposition~\ref{prp:locstabOL} proves that LR implies that $(\ov{c}^i,\ov{c}^j,\ov{k})$ satisfies a (weak) local stability condition. 

Cross-marginal impatience (CMI) measures how the marginal rate of intertemporal substitution between current and future consumption for one player changes in response to a change in the strategy followed by the other player. Given that the analysis is carried out in a neighborhood of a stationary point, CMI is defined in terms of discount factors $(\alpha_i,\alpha_j)$ only. For this reason, not much is lost if we consider two consecutive periods, and define the  intertemporal substitution between $c_t^i$ and $c_{t+1}^i$ for player $i$ as
\begin{align}
\label{eq:MRSdef}
\mu_{t,t+1}^i:=\frac{U^i(c_t^i,c_t^j,k_t)}{\alpha_i(c_t^i)U^i(c_{t+1}^i,c_{t+1}^j,k_{t+1})}, \qquad t=0,1,\ldots
\end{align}
For a stationary path, this is simply $\ov{\mu}^i=\left[\alpha_i(\ov{c}^i)\right]^{-1}$. The obvious distinction between this formulation and a constant discount factor, say $0 < \beta < 1$, where the marginal rate of substitution $\beta^{-1}$ is also constant, is that $\ov{\mu}^i$ decreases with the level of stationary consumption. 

Before characterizing CMI, in order to capture various aspects of marginal impatience, consider the following exercise.  Suppose that \eqref{eq:LRdef} and $\alpha_j'(c)/\alpha_j(c)-\alpha_i'(c)/\alpha_i(c) \geq 0$ hold for all $c$ in some interval $I$, so the right-hand side of \eqref{eq:crossimp} is negative. Next, differentiate \eqref{eq:MRSdef} and evaluate the result at $(\ov{c}^i,\ov{c}^j,\ov{k})$. After substitutions, we have
\begin{align*}
\frac{\partial{\mu_{t,t+1}^i}}{\partial{c_t^i}}&=-\left(2\,\frac{\alpha_i'}{\alpha_i^2}-\frac{W^i}{\alpha_iU^i}\right) < 0, &  \frac{\partial{\mu_{t,t+1}^i}}{\partial{c_{t+1}^i}}&=\frac{\alpha_i'}{\alpha_i^2}-\frac{W^i}{\alpha_iU^i} > 0,
\\
\frac{\partial{\mu_{t,t+1}^i}}{\partial{c_t^j}}&=-\frac{\alpha_i'}{\alpha_i^2} < 0, &  \frac{\partial{\mu_{t,t+1}^i}}{\partial{c_{t+1}^j}}&=\frac{\alpha_i'}{\alpha_i^2} > 0,\\
\frac{\partial{\mu_{t,t+1}^i}}{\partial{k_t}}&=\frac{\alpha_i'}{\alpha_i^3} > 0, & \frac{\partial{\mu_{t,t+1}^i}}{\partial{k_{t+1}}}&=-\frac{\alpha_i'}{\alpha_i^3}< 0.
\end{align*}
The first line of partial derivatives above reflects the property of decreasing marginal impatience (DMI): agents become less impatient if  future consumption is substituted for current consumption. The second line contains cross-partial derivatives, which are naturally related to the concept of CMI:  they capture each player's best response to changes in the strategy of the other player. In this scenario, player $i$ gets more patient as player $j$ gives up future consumption for current consumption, hence preferences satisfy \emph{decreasing cross-marginal impatience} (DCMI).  Finally, impatience tends to decrease as current capital replaces future capital. There is an interesting observation about the size of these effects. From the second and third lines, note that increasing $c_t^j$ and $c_{t+1}^j$ (or $k_t$ and $k_{t+1}$) by the same amount has a null effect on impatience for player $i$, they simply cancel each other out. Therefore, changes in these variables that tend to have permanent effects will not affect impatience. But adding up the values on the first line above implies that equally increasing $c_t^i$ and $c_{t+1}^i$ still makes agent $i$ less impatient (the net effect in absolute value is $\alpha_i'/\alpha_i^2$), since the discount factor increases utility in every future period. We are ready to formally define the second property.

\begin{defn}
\label{def:NUCMI}
Assume that $\alpha_i$, $i=1,2$ satisfy \ref{asm:alpha}--\ref{asm:alphap}. Let $I \in S^i \cap S^j$ be an open interval such that $\alpha_i(c) \geq \alpha_j(c)$ for all $c \in I$. It is said that $(\alpha_i,\alpha_j)$ exhibits \emph{non-upward cross-marginal impatience} (NUCMI) if
\begin{equation}
\label{eq:NUCMI}
\frac{\partial{\mu_{t,t+1}^i}(c)}{\partial{c_t^j}} \geq \frac{\partial{\mu_{t,t+1}^j}(c)}{\partial{c_t^i}}, \qquad \text{all}\ c \in I.
\end{equation}
The property of \emph{non-downward cross-marginal impatience}  (NDMI) is defined analogously but with the inequality reversed.\footnote{A related condition, albeit presented in statistical terms, called \emph{non-upward-crossing}, has been recently introduced by \citet{chadeswink16} in a study of the first-order approach to the classical moral hazard problem.}
\end{defn}

While DCMI relates to a first-order effect on $\mu_{t,t+1}^i$ from a change in current consumption, NUCMI measures how cross-marginal impatience varies with the level of impatience, so that if impatience were a continuous variable, it would be characterized by a second cross-partial derivative.\footnote{This should not be confused with the following second cross-partial derivative
\begin{equation*}
\frac{\partial^2{\mu_{t,t+1}^i}}{\partial{c_t^j}\partial{c_t^i}}=\frac{U_{12}^i}{\alpha_iU^i}\left(1-\frac{\alpha_i'}{\alpha_i}\right)=-\frac{\alpha_i'}{\alpha_i^2}\left(1-\frac{\alpha_i'}{\alpha_i}\right),
\end{equation*}
which depends on $\alpha_i$ but not on $\alpha_j$.
}
An intuitive way to understand NUCMI, is that when \eqref{eq:NUCMI} holds, then
\begin{align*}
\left|\left(\frac{1}{\alpha_i(c^i)}\right)'\right|=\frac{\alpha_i'(c^i)\hfill}{\alpha_i(c^i)^2} \leq \frac{\alpha_j'(c^j)\hfill}{\alpha_j(c^j)^2}=\left|\left(\frac{1}{\alpha_j(c^j)}\right)'\right|, \qquad \text{for all}\ c^i=c^j=c,
\end{align*}
that is, in absolute value, the slope of the curve $(1/\alpha_j)$ must be larger (i.e., more inelastic) than the slope of $(1/\alpha_i)$ for equal levels of consumption. See Figure \ref{fig:stateqAU} for a graphical interpretation of this condition.

The following example analyzes the implications of assuming LR and NUCMI on the model developed in this section. First, we have from \eqref{eq:Fls} that $\partial{c_{t+1}^i}/\partial{c_t^i}=F_1^i > 0$, so future consumption for player $i$ increases after a small increase in $c_t^i$. Current income does not change, so capital accumulation $k_{t+1}$ must fall. As future income $f(k_{t+1})$ also falls, there is a negative income effect (in the sense of permanent income) that tends to decrease $c_{t+1}^i$. A lower level of accumulation in turn increases  $f'(k_{t+1})$, which tends to increase $c_{t+1}^i$. It seems that there should be a rather strong substitution effect for this change to generate incentives for future consumption. On the other hand, $\partial{c_{t+1}^i}/\partial{k_t}=F_3^i < 0$, so future consumption is negatively related with current capital stock. An increase in $k_t$ has a positive income effect that tends to increase $c_{t+1}^i$, but in the current period it can only decrease investment, since $(c_t^i,c_t^j)$ remain constant by assumption. Again, we have an income effect and a substitution effect operating in opposite directions with respect to $c_{t+1}^i$. The former should be quite strong to explain the negative sign. Based on standard income and substitution effects, these two results can not be reconciled. We are not able to determine the sign of $F_2^i$, unless additional assumptions are made. And the same applies to $F_l^j$, $l=1,2,3$.

What distinctive features of our model could help to explain the apparent contradiction described above? An effect that strengthens the substitution effect between current and future consumption is the presence of decreasing marginal impatience: higher $c_t^i$ implies a higher discount factor $\alpha_i$, which favors consumption in future periods. Additional forces arise from the strategic interaction and its effects on the strategy space. An increase in $c_t^i$ decreases the marginal rate of substitution for player $j$, and this tends to decrease future consumption $c_{t+1}^j$. Taking into account our earlier discussion in Section \ref{sec:stratspace}, the strategy space in $(t+1)$ for player $i$ expands as a result of lower $c_{t+1}^j$, which tends to rise $c_{t+1}^i$. Moreover, the future marginal rate of substitution for $i$ increases. Although this effect is more distant in time, under NUCMI it could be strong enough to increase $c_{t+1}^i$. All these effects increase future consumption, as predicted by the positive value for $F_1^i$. Similar arguments can be used to explain how these additional channels contribute to a negative sign of $F_3^i$.

There are obviously additional direct and indirect effects not included in the previous analysis. They all act together in complex intertwined ways. But some of these new channels offer plausible explanations for the predictions of the model under particular assumptions. It is worth mentioning that strategy alone without recursive preferences is not enough to generate long-lived effects from these additional channels, for two reasons. The first one has to do with the turnpike property (more specifically, Ramsey's conjecture), since constant discount factors prevent any cross-marginal effects. Second, the reinforcing effects that arise from changes in the strategy space will be absent.

\subsection{Discussion}
\label{sec:discussOL}

In this section we discuss and extend the previous results to obtain further insight into the characterization of open-loop equilibria. We begin with a comparison between each player's autarky stationary equilibrium and the stationary equilibrium of the precommitment game.

\begin{prop}
\label{prp:localreg}
Suppose there is an open interval $I \subset S^i \cap S^j$ such that $\alpha_i(c) \geq \alpha_j(c)$ for all $c \in I$, and LR is satisfied on $\Omega:=I \times I \times K$. If the game $\mathcal{P}$ has a non-trivial stationary equilibrium $(\ov{c}^i,\ov{c}^j,\ov{k})$ on $\Omega$, then the following holds
\begin{enumerate}[label=\emph{(\roman*)},leftmargin=*]
\item $\ov{c}^i \leq \ov{c}^j$
\item $\ov{k} \leq k_a^j \leq k_a^i$
\item $c_a^i \leq \ov{c}^i$ and $c_a^j \leq \ov{c}^j$,
\end{enumerate}
where $(c_a^i,k_a^i)$ and $(c_a^j,k_a^j)$ are the stationary equilibria for autarky programs $\mathcal{A}^i$ and $\mathcal{A}^j$, respectively.
\end{prop}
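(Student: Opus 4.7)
The plan is to extract everything from the stationary first-order conditions \eqref{eq:stptOLi}--\eqref{eq:stptOLk}, compared against the autarky characterization \eqref{eq:SPaut} and Lemma~\ref{lem:stateqAU}; no dynamical-system or fixed-point argument is required beyond stationarity itself. The sorting-type assumption $\alpha_i\ge\alpha_j$ on $I$, together with monotonicity of $\alpha_i,\alpha_j$ and concavity of $f$, does all the heavy lifting once LR is invoked to produce local monotonicity of an auxiliary function.

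For (i), read off from \eqref{eq:stptOLi}--\eqref{eq:stptOLj} that $\alpha_i(\ov{c}^i)=\alpha_j(\ov{c}^j)=1/f'(\ov{k})$. Since $\alpha_j\le\alpha_i$ pointwise on $I$, this forces $\alpha_j(\ov{c}^i)\le\alpha_i(\ov{c}^i)=\alpha_j(\ov{c}^j)$, and strict monotonicity of $\alpha_j$ from \ref{asm:alpha} then yields $\ov{c}^i\le\ov{c}^j$. For (ii), introduce the autarky response curve $h_j(k):=\alpha_j(f(k)-k)\,f'(k)$, which by \eqref{eq:SPaut} satisfies $h_j(k_a^j)=1$. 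Using \eqref{eq:stptOLk} one has $f(\ov{k})-\ov{k}=\ov{c}^i+\ov{c}^j>\ov{c}^j$ at a non-trivial equilibrium, so monotonicity of $\alpha_j$ combined with \eqref{eq:stptOLj} gives
\begin{equation*}
h_j(\ov{k})=\alpha_j(\ov{c}^i+\ov{c}^j)\,f'(\ov{k})>\alpha_j(\ov{c}^j)\,f'(\ov{k})=1=h_j(k_a^j).
\end{equation*}
A direct differentiation shows that $h_j'(k)<0$ is equivalent to the LR inequality \eqref{eq:LRdef} (with index $j$) evaluated at $c^j=f(k)-k$; since LR holds on all of $\Omega=I\times I\times K$, $h_j$ is strictly decreasing on the range of $k$ for which $f(k)-k\in I$, so the displayed strict inequality forces $\ov{k}<k_a^j$. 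The remaining bound $k_a^j\le k_a^i$ is exactly what Lemma~\ref{lem:stateqAU} supplies.

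Part (iii) follows from (ii) by an identical ``$\alpha$-inversion'' of the stationary Euler conditions: $\ov{k}\le k_a^i$ together with strict concavity of $f$ gives $f'(\ov{k})\ge f'(k_a^i)$, and comparing $\alpha_i(\ov{c}^i)=1/f'(\ov{k})$ with $\alpha_i(c_a^i)=1/f'(k_a^i)$ pins down the relation between $\ov{c}^i$ and $c_a^i$ via monotonicity of $\alpha_i$; the analogous chain using $\ov{k}\le k_a^j$ handles player $j$. The main technical obstacle sits inside (ii): LR is posited on the rectangle $I\times I\times K$, and to run the monotonicity of $h_j$ on an interval containing both $\ov{k}$ and $k_a^j$ one needs $f(k)-k\in I$ throughout that interval. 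In practice this can be secured either by taking $I$ to be an open interval containing both $\ov{c}^i+\ov{c}^j$ and $c_a^j$ and invoking continuity of $k\mapsto f(k)-k$, or by chaining two applications of LR---one at the precommitment stationary point and one at the autarky point---via an intermediate-value argument.
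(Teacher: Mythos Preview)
Your proposal is correct and follows essentially the same route as the paper's own proof. Both arguments extract everything from the stationary conditions \eqref{eq:stptOLi}--\eqref{eq:stptOLk} together with the autarky relation \eqref{eq:SPaut}, using only monotonicity of the $\alpha_i$ and concavity of $f$. For part (ii) the paper derives the same key inequality $\alpha_j(f(\ov{k})-\ov{k})f'(\ov{k})\ge 1$ from nonnegativity of $\ov{c}^i$, just as you do (though without naming your auxiliary function $h_j$), and then passes directly to $\ov{k}\le k_a^j$; your explicit invocation of LR to get monotonicity of $h_j$ is precisely the step the paper leaves tacit, and your discussion of the range issue (ensuring $f(k)-k\in I$ between $\ov{k}$ and $k_a^j$) is a point the paper does not address at all.

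One remark on (iii): the chain you outline, namely $\ov{k}\le k_a^i\Rightarrow f'(\ov{k})\ge f'(k_a^i)\Rightarrow \alpha_i(\ov{c}^i)=1/f'(\ov{k})\le 1/f'(k_a^i)=\alpha_i(c_a^i)$, yields $\ov{c}^i\le c_a^i$, not $c_a^i\le\ov{c}^i$. This is also exactly what the paper's own proof concludes (``hence the monotonicity of the discount factor implies $\ov{c}^i\le c_a^i$''), so the inequalities as printed in the statement of the proposition are reversed; both your argument and the paper's establish $\ov{c}^i\le c_a^i$ and $\ov{c}^j\le c_a^j$.
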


\begin{figure}[h!]
\centering
\includegraphics[width=0.75\textwidth]{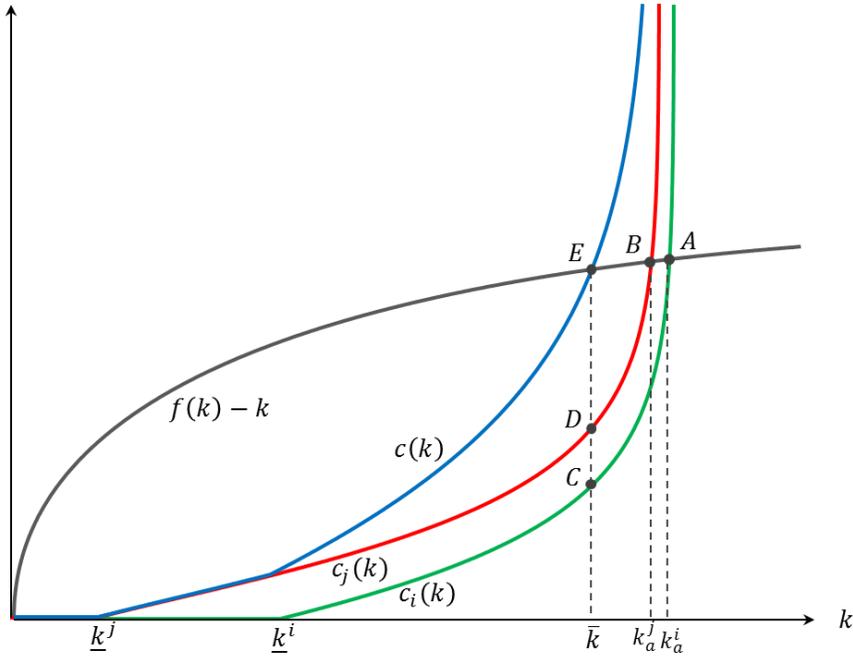}
\caption{Open-loop stationary equilibrium}
\label{fig:stateqOL}
\end{figure}

The conclusions of Proposition \ref{prp:localreg} can be  illustrated as follows. From the equilibrium condition \eqref{eq:stptOL}, we can write 
\begin{equation*}
c_i(k)=\alpha_i^{-1}\left(\frac{1}{f'(k)}\right) \quad \text{and} \quad c_j(k)=\alpha_j^{-1}\left(\frac{1}{f'(k)}\right),
\end{equation*}
where $\alpha_i^{-1}(\cdot)$ and $\alpha_j^{-1}(\cdot)$ denote  inverse functions. These functions are well defined since $\alpha_i$, $\alpha_j$ and $f'$ are strictly monotone. As a result, $c_i(k)$ and $c_j(k)$ are both strictly increasing. Moreover, it is easy to see that $c_i(k) \leq f(k)-k$ for all $0 \leq k \leq k_a^i$ and $c_j(k) \leq f(k)-k$ for all $0 \leq k \leq k_a^j$ holds. A stationary equilibrium for game $\mathcal{P}$ can be represented graphically as the intersection of two curves: aggregate consumption, defined by $c(k):=c_i(k)+c_j(k)$, and aggregate net output, which is $f(k)-k$. This is depicted as point $E$ in Figure \ref{fig:stateqOL}, whereas $C$ and $D$ are  the consumption levels $c_i(\ov{k})$ and $c_j(\ov{k})$ in such equilibrium. Points $A$ and $B$ represent a stationary autarky equilibrium for players $i$ and $j$, respectively, that correspond to the same points in Figure \ref{fig:stateqAU}. Note the lower bounds $\un{k}^i$ and $\un{k}^j$, which satisfy the following conditions $ \alpha_i(0)f'(\un{k}^i)=1$ and $\alpha_j(0)f'(\un{k}^j)=1$.

In a neighborhood of a stationary point $\ov{k} \in (0,k_m)$ it is easily verified that
\begin{equation}
\label{eq:cikprime}
c_i'(\ov{k})=-\frac{f''(\ov{k})/f'(\ov{k})}{\alpha_i'(c_i(\ov{k}))/\alpha_i(c_i(\ov{k}))} \quad \text{and} \quad c_j'(\ov{k})=-\frac{f''(\ov{k})/f'(\ov{k})}{\alpha_j'(c_j(\ov{k}))/\alpha_j(c_j(\ov{k}))},
\end{equation}
therefore NUCMI implies that the $c_i(k)$ curve is steeper than $c_j(k)$ near $\ov{k}$.\footnote{Note that LR implies that $c_i'(k) > (f'(k)-1)$ near $\ov{k}$, since by  \eqref{eq:cikprime}
\[
\frac{\alpha_i'(c_i(\ov{k}))}{\alpha_i(c_i(\ov{k}))}\,c_i'(\ov{k})+\frac{f''(\ov{k})}{f'(\ov{k})}=0.
\]
But replacing $c_i'$ with $(f'-1)$ above, and letting $k$ vary on $I^i$, transforms this into a strict inequality, hence the result follows.}

Before closing this section, we extend to the current framework the approach used to analyze the local stability of autarky equilibria in Section \ref{sec:stratspace}. For the  game this is a weaker notion of stability, because the responses to any perturbation are limited to the discount factor $\alpha_i(c^i)$ and the marginal product of capital $f'(k)$. To perform a full-fledged stability analysis, we need to characterize the stable manifold. Later we show that this is a rather complicated task. 

Let $(\ov{c}^i,\ov{c}^j,\ov{k})$ be a stationary equilibrium given by \eqref{eq:stptOL}. From the implicit function theorem, there is an open rectangle $I^i \times I^j \times I^k$ containing $(\ov{c}^i,\ov{c}^j,\ov{k})$, and continuously differentiable functions $c_i:I^k \to I^i$ and $c_j:I^k \to I^j$ such that
\begin{align*}
\alpha_i\left[F^i(c_i(\ov{k}),c_j(\ov{k}),\ov{k})
\right]f'(\ov{k})=1,\\[5pt]
\alpha_j\left[F^j(c_j(\ov{k}),c_i(\ov{k}),\ov{k})
\right]f'(\ov{k})=1,\\[5pt]
f(\ov{k})-c_i(\ov{k})-c_j(\ov{k})=\ov{k}.
\end{align*}
Let $s(\ov{k}):=f(\ov{k})-c_i(\ov{k})-c_j(\ov{k})$ denote the aggregate savings function. The local stability condition is obtained in two steps: first, implicitly differentiating the first two lines of the above system of equations, which yields  $c_i'(\ov{k})$ and $c_j'(\ov{k})$; and, second, verifying that in a neighborhood of the stationary point, $|s'(\ov{k})| < 1$ holds. It is readily verified that $s'(\ov{k}) > 0$, so the local stability condition is equivalent to
\begin{equation}
\label{eq:locstabOL}
\frac{\alpha_i'}{\alpha_i}\,\frac{\alpha_j'}{\alpha_j}\left(f'-1\right)+\left(\frac{\alpha_i'}{\alpha_i}+\frac{\alpha_j'}{\alpha_j}\right)\frac{f''}{f'} < 0.
\end{equation}
The following result shows that LR is sufficient for local stability.

\begin{prop}
\label{prp:locstabOL}
Let $(\ov{c}^i,\ov{c}^j,\ov{k}) \in  S^i \times S^j \times K$ be a stationary equilibrium for the game $\mathcal{P}$ that satisfies LR on an open rectangle $I^i \times I^j \times I^k \subset S^i \times S^j \times K$. Then, the stationary equilibrium is locally stable.
\end{prop}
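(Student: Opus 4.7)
The plan is to leverage the reduction already carried out in the paragraph preceding the proposition: local stability of the stationary point is characterized by the one-dimensional condition $|s'(\ov k)| < 1$ on the aggregate savings map, which the text expresses as the algebraic inequality \eqref{eq:locstabOL}. So the real content of the proof is to check that LR implies \eqref{eq:locstabOL}, plus a brief confirmation that \eqref{eq:locstabOL} is indeed the correct one-sided bound $s'(\ov k) < 1$.

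For the algebraic step, abbreviate $\delta_l := \alpha_l'(\ov c^l)/\alpha_l(\ov c^l) > 0$ (for $l=i,j$) as in \eqref{eq:notatioOL}, so that LR reads
\[
\eta_l := \delta_l\bigl(f'(\ov k)-1\bigr) + \frac{f''(\ov k)}{f'(\ov k)} < 0, \qquad l=i,j.
\]
The left-hand side of \eqref{eq:locstabOL} then rearranges as
\[
\delta_i\delta_j\bigl(f'-1\bigr) + (\delta_i+\delta_j)\,\frac{f''}{f'}
= \delta_j\!\left[\delta_i\bigl(f'-1\bigr) + \frac{f''}{f'}\right] + \delta_i\,\frac{f''}{f'}
= \delta_j\,\eta_i + \delta_i\,\frac{f''}{f'}.
\]
Under LR, $\eta_i < 0$ and $\delta_j > 0$, so the first summand is strictly negative. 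The second summand is also strictly negative, since $\delta_i > 0$ and $f''/f' < 0$ by strict concavity of $f$ together with $f' > 0$. Their sum is therefore negative on the rectangle $I^i \times I^j \times I^k$, which is precisely \eqref{eq:locstabOL}. The decomposition is manifestly symmetric in $i,j$ (one gets $\delta_i\eta_j + \delta_j f''/f' < 0$ the other way around), so the conclusion is independent of any patience ordering.

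The only piece still to justify is the equivalence between local stability and \eqref{eq:locstabOL}, essentially asserted in the text. Differentiating the stationary relations $\alpha_i(c_i(k))f'(k)=1$ and $\alpha_j(c_j(k))f'(k)=1$ implicitly gives formulas \eqref{eq:cikprime} for $c_i'(\ov k),\, c_j'(\ov k) > 0$; substituting into $s'(\ov k) = f'(\ov k) - c_i'(\ov k) - c_j'(\ov k)$ and using LR to bound each $c_l'(\ov k)$ in terms of $f'(\ov k)-1$ yields $0 < s'(\ov k)$, so local stability is equivalent to the one-sided inequality $s'(\ov k) < 1$; clearing denominators in $1 - s'(\ov k) > 0$ reproduces \eqref{eq:locstabOL}. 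There is no serious obstacle here — the argument is essentially the single identity $\delta_j\eta_i + \delta_i f''/f' < 0$ — the only care required is sign bookkeeping when translating $|s'(\ov k)| < 1$ into \eqref{eq:locstabOL}, which is precisely why the paper records $s'(\ov k) > 0$ as a separate observation before stating the proposition.
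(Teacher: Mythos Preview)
Your algebraic reduction of \eqref{eq:locstabOL} is correct but follows a different route from the paper. The paper multiplies each LR inequality by its own $\delta_l$, sums to obtain $(\delta_i^2+\delta_j^2)(f'-1)+(\delta_i+\delta_j)f''/f'<0$, and then invokes Young's inequality $\delta_i\delta_j\le\delta_i^2+\delta_j^2$ together with $f'(\ov k)>1$ to pass to \eqref{eq:locstabOL}. Your one-line identity $\delta_i\delta_j(f'-1)+(\delta_i+\delta_j)f''/f'=\delta_j\eta_i+\delta_i f''/f'$ is more elementary: it dispenses with Young's inequality, does not need $f'>1$, and in fact uses only \emph{one} of the two LR conditions (plus strict concavity of $f$), so it is a mild strengthening. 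One small caveat: your side remark that LR yields $s'(\ov k)>0$ via the bound $c_l'(\ov k)>f'(\ov k)-1$ goes the wrong way---that bound gives $s'(\ov k)<2-f'(\ov k)$, an upper bound, not a lower one. The paper simply asserts $s'(\ov k)>0$ in the text without proof, so this is an inherited gap rather than one you introduced; just do not claim it follows from the LR bound you cite.
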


\begin{proof}
See Appendix \ref{sec:appOL}.
\end{proof}

\subsection{Local Stable Manifold}
\label{sec:manifoldOL}

To shorten notation, let $x_t:=(c_t^i,c_t^j,k_t)$ for all $t$, so for a given initial condition $x_0 \geq 0$, an equilibrium path is obtained from successive iterations of the map
\begin{equation}
\label{eq:Phidef2}
x_{t+1} = \Phi(x_t), \qquad t=0,1,\ldots
\end{equation}
We stated earlier that if $\ov{x}=(\ov{c}^j,\ov{c}^i,\ov{k})$ is a fixed point of $\Phi$ in $X:=S^i \times S^j \times K$, then $\ov{x}$ is a stationary equilibrium for the game $\mathcal{P}$. The local analysis of a nonlinear dynamical system determine conditions under which an equilibrium path converges to a stationary point $\ov{x}$. That is,  $\lim_{t \to \infty} \Phi^t(x_0) = \ov{x}$, for $x_0$ sufficiently close to this stationary point. 

Some additional assumptions besides \ref{asm:alpha}--\ref{asm:uprime} and \ref{asm:prodf}--\ref{asm:prodfp} are needed for this result, so we state them in a common notation below:
\begin{enumerate}[label=(P\arabic*)]
\item\label{asm:etaneg} $\eta_i < 0$ and $\eta_j < 0$,
\item\label{asm:crossimp} $\delta_i \leq \delta_j$,
\item\label{asm:postrace} $f'-(2\omega_i\omega_j+(\delta_i-\omega_i)\eta_j+\eta_i(\delta_j-\omega_j)) \geq 0$.
\end{enumerate}
Note that \ref{asm:etaneg} and \ref{asm:crossimp} are local regularity and non-upward cross-marginal impatience, respectively, introduced in the previous section. Assumption \ref{asm:postrace} guarantees that the characteristic polynomial associated to the linearized system $\Phi$ in a neighborhood of $\ov{x}$ has real roots.

\begin{thrm}
\label{thm:manifoldOL}
Let $\ov{x}$ be a stationary equilibrium of \eqref{eq:Phidef2} and assume that \ref{asm:etaneg}--\ref{asm:postrace} are satisfied. Then, there exists a neighborhood $\mathcal{N}$ of $\ov{x}$ and a continuously differentiable function $\phi:\mathcal{N} \to \mathbb{R}_+^2$, for which the matrix $[\phi_l^i(\ov{x})]$ has full rank, such that if $\{x_t\}_{t=0}^\infty$ is an equilibrium path for $\mathcal{P}$ with $x_0 \in \mathcal{N}$ and $\phi(x_0)=0$, then $\lim_{t \to \infty} x_t = \ov{x}$.
\end{thrm}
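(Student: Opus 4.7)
The statement is a local stable manifold theorem for the discrete-time system $x_{t+1} = \Phi(x_t)$ at its hyperbolic fixed point $\ov{x}$. The phase space $X$ has dimension three, with one predetermined coordinate ($k$) and two jump coordinates ($c^i, c^j$), so the announced codomain $\mathbb{R}^2$ for $\phi$ with full-rank Jacobian at $\ov{x}$ is exactly what one needs to cut out a codimension-two submanifold via $\phi^{-1}(0)$, i.e.\ a one-dimensional local stable manifold $W^s_{\text{loc}}(\ov{x})$. The plan is therefore classical: linearize $\Phi$ at $\ov{x}$, show the spectrum of the linearization splits as $1+2$ (one root in the open unit disk, two outside), and invoke the local stable manifold theorem for $C^1$ diffeomorphisms.

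I would first assemble the Jacobian
\[
A := D\Phi(\ov{x}) = \begin{pmatrix} F^i_1 & F^i_2 & F^i_3 \\ F^j_2 & F^j_1 & F^j_3 \\ -1 & -1 & f'(\ov{k}) \end{pmatrix},
\]
using the partials of $F^i, F^j$ from \eqref{eq:Fls} and those of $F^k$ from \eqref{eq:Fk}, and form the monic characteristic polynomial $p(\lambda) = \det(\lambda I - A)$. Its coefficients are polynomial expressions in $\omega_i, \omega_j, \delta_i, \delta_j, \eta_i, \eta_j$ and $f'(\ov{k})$, with the common positive denominator $\Delta_0 = \omega_i \omega_j - \delta_i \delta_j$. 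The core step is to verify, under \ref{asm:etaneg}--\ref{asm:postrace}, that $p$ has three real roots with exactly one in $(-1,1)$ and two of modulus greater than one. Assumption \ref{asm:postrace} plays the role of a discriminant non-negativity condition, arising naturally after Schur-complementing the $2 \times 2$ consumption block against the $k$-equation, and secures real roots; a sign analysis of $p(1)$, $p(-1)$, and $\det A$ under \ref{asm:etaneg} ($\eta_i, \eta_j < 0$) and \ref{asm:crossimp} ($\delta_i \leq \delta_j$) should then deliver $p(1) \cdot p(-1) < 0$ (at least one root inside $(-1,1)$) and, combined with the leading-coefficient sign, push the remaining two roots outside $[-1,1]$. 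This yields a one-dimensional stable subspace $E^s \subset \mathbb{R}^3$ and a two-dimensional unstable subspace $E^u$.

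Given hyperbolicity with $\dim E^s = 1$, the standard local stable manifold theorem for $C^1$ diffeomorphisms furnishes a $C^1$ invariant submanifold $W^s_{\text{loc}}(\ov{x})$, of dimension one and tangent to $E^s$ at $\ov{x}$, along which every orbit converges to $\ov{x}$. Concretely, $W^s_{\text{loc}}$ can be written as the graph of a $C^1$ map $g: U_s \subset E^s \to E^u$ with $g(0) = 0$ and $Dg(0) = 0$, so setting $\phi(x) := \pi_u(x - \ov{x}) - g\bigl(\pi_s(x - \ov{x})\bigr)$ (with $\pi_s, \pi_u$ the spectral projections onto $E^s, E^u$) gives $D\phi(\ov{x}) = \pi_u$, whence $[\phi^i_l(\ov{x})]$ has full rank, and $\phi^{-1}(0) = W^s_{\text{loc}}(\ov{x}) \cap \mathcal{N}$; any orbit with $\phi(x_0) = 0$ then lies in $W^s_{\text{loc}}$ and converges to $\ov{x}$. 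The principal obstacle is the eigenvalue-split step: the cubic $p$ has six interlocking parameters, and organizing the algebra cleanly enough that \ref{asm:etaneg}--\ref{asm:postrace} each play a transparent role in the sign pattern of $p(\pm 1)$, $\det A$, and the discriminant is the delicate part of the proof.
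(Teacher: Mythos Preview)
Your outline is correct and follows essentially the same route as the paper: linearize at $\ov{x}$, study the cubic characteristic polynomial of $A=D\Phi(\ov{x})$, establish a $1+2$ spectral split, and invoke the local stable manifold theorem to produce $\phi$ as a submersion cutting out $W^s_{\text{loc}}(\ov{x})$.

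Two small points of comparison. First, the paper organizes the ``delicate part'' you flag by evaluating $p$ at $-1$, $0$, and $1$ (showing $p(-1)<p(0)<0<p(1)$ under \ref{asm:etaneg}) and then analyzing the critical points of $p$: it proves $\tfrac{1}{3}\tr(A)>1$, so the inflection point of $p$ lies to the right of $1$, and \ref{asm:postrace} enters precisely as the condition $\tr(A^2)-\tfrac{1}{3}\tr^2(A)>0$ guaranteeing $p'$ has two real zeros. This pins down the sharper conclusion $0<\ov\lambda_1<1<\ov\lambda_2<\ov\lambda_3$ (all roots positive), not merely one root in $(-1,1)$; your Schur-complement interpretation of \ref{asm:postrace} is not how the paper uses it. Second, your construction of $\phi$ via the unstable spectral projection is cleaner than the paper's explicit Jordan-block computation of the matrix $B$, but both yield the same full-rank Jacobian at $\ov{x}$.
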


Details of the proof can be found in Appendix \ref{sec:appOL}, so we present an outline of the main steps. First, we evaluate a linearized version of the dynamical system \eqref{eq:Phidef2} and find the associated eigenvalues as the roots of a third-degree polynomial. Second, we verify that the conditions for the stable manifold theorem hold. And third, we briefly explain how the stable manifold can be characterized.

We begin by taking a first-order approximation of $\Phi$ near $\ov{x}$ and finding the eigenvalues of the Jacobian matrix $D\Phi(\ov{x})$. Let $A$ denote the $3 \times 3$ matrix defined by
\begin{equation*}
A:=D\Phi(\ov{x})=
\begin{bmatrix}
\ov{F}_1^i & \ov{F}_2^i & \ov{F}_3^i\\
\ov{F}_2^j & \ov{F}_1^j & \ov{F}_3^j\\
\ov{F}_1^k & \ov{F}_2^k & \ov{F}_3^k\\
\end{bmatrix}
,
\end{equation*}
where all entries are evaluated at $\ov{x}$, which is denoted by placing a bar over each entry. Note that the first two rows of $D\Phi$ are given by \eqref{eq:Fls}, and the third row is obtained by differentiating \eqref{eq:Fk} and evaluating the result at $\ov{x}$, which yields
\begin{equation*}
\ov{F}_1^k = -1, \quad \ov{F}_2^k = -1, \quad \text{and} \quad\ov{F}_3^k = f'(\ov{k}).
\end{equation*}
In order to find eigenvalues of $A$, note that its trace and determinant are
\begin{align}
\label{eq:trA}
\mathrm{tr}(A)&=\frac{1}{\Delta_0}\left[2\omega_i\omega_j -(\eta_i\omega_j-\delta_i\eta_j)-(\omega_i\eta_j-\eta_i\delta_j)\right]+f',\\[5pt]
\label{eq:detA}
\det(A)&=\frac{\omega_i\omega_j}{\Delta_0}f'.
\end{align}
By the Cayley-Hamilton theorem, the coefficients of the characteristic polynomial can be expressed in terms of traces of powers of $A$, one of them being the determinant of $A$.\footnote{The theorem states that any invertible $n \times n$ matrix $A$ over the real field satisfies its own characteristic polynomial.} In particular, for $n=3$, the characteristic polynomial is given by
\begin{equation}
\label{eq:poly3}
p(\lambda):=\lambda^3 - \tr(A)\lambda^2 + \tfrac{1}{2}\left(\tr^2(A)-\tr(A^2)\right)\lambda - \det(A),
\end{equation}
and has the eigenvalues of $A$ as roots.

As shown in Figure \ref{fig:3degpoly}, the characteristic polynomial satisfies $p(-1) < 0$, $p(0) < 0$ and $p(1) > 0$. Moreover, it reaches local extrema at points $M$ and $m$ that correspond to a local maximum at $r_1$ and a local minimum at $r_2$, and a point of inflection at $r_3$ (not depicted) with  $0 < r_1 < 1 < r_3 < r_2$. In conclusion, the eigenvalues associated to $p(\lambda)$ are real and  satisfy
\begin{equation}
\label{eq:eigenvOL}
0 < \ov{\lambda}_1 < 1 < \ov{\lambda}_2 < \ov{\lambda}_3.
\end{equation}
The fact that all eigenvalues have moduli different than one implies that $\ov{x}$ is a hyperbolic fixed point.

\begin{figure}[h!]
\centering
\includegraphics[width=0.8\textwidth]{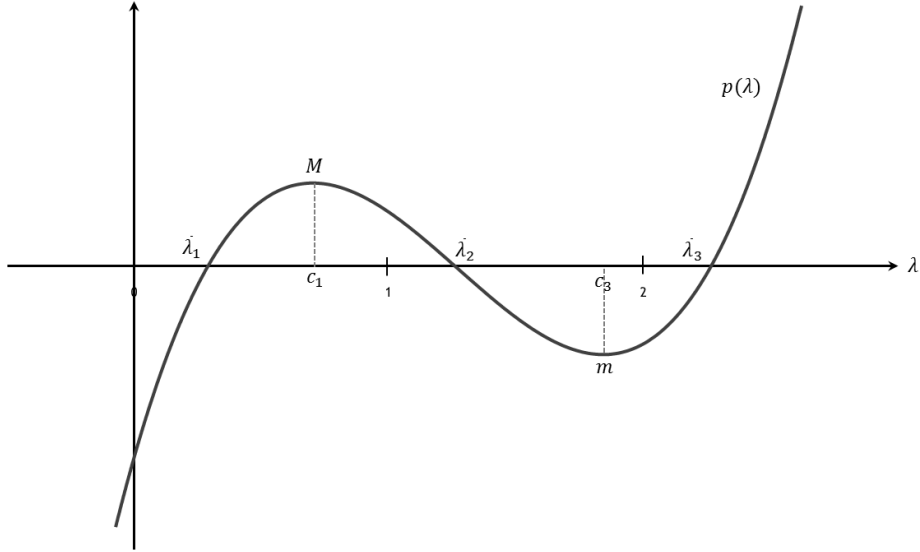}
\caption{Characteristic polynomial and eigenvalues}
\label{fig:3degpoly}
\end{figure}

Since $\det(A) \neq 0$, this matrix is locally invertible and, given that all elements of $D\Phi(\ov{x})$ exist and are continuous in some neighborhood $\mathcal{U}$ of $\ov{x}$, the map $\Phi$ is a local diffeomorphism.\footnote{Given two manifolds $M,N$, a differentiable map $\Phi:M \to N$ is a diffeomorphism if it is a bijection and its inverse $\Phi^{-1}$ is also differentiable.} Consequently, there exists a neighborhood $\mathcal{N} \subset \mathcal{U}$ and a continuously differentiable function $\phi: \mathcal{N} \to \mathbb{R}^2$. It remains to verify that the Jacobian matrix of $\phi$
\begin{equation*}
D\phi(\ov{x}):=
\begin{bmatrix}
\phi_1^i(\ov{x}) & \phi_2^i(\ov{x})\\[5pt]
\phi_1^j(\ov{x}) & \phi_2^j(\ov{x})
\end{bmatrix}
\end{equation*}
has full rank. After lengthy calculations, these derivatives can be solved analytically in terms of the model's parameters and the eigenvalues $\ov{\lambda}_l$, $l=1,2,3$, but the results turn out to be long and complicated to characterize. A detailed analysis of the stable manifold is available in Appendix \ref{sec:appOL}.

\section{Markovian Strategies}
\label{sec:Markov}

The game in Markovian strategies, or simply, the \emph{Markovian game}, is described by a tuple
$\mathcal{M}:=\left(N,K,\tau,\{A^i,\mathrm{W}^i\}_{i \in N}\right)$, where $N=\{1,2\}$ as before, $K$ is the state space, $\tau$ a transition function, and for each $i \in N$, $A^i$ represents the action space, and $\mathrm{W}^i$ are the payoff functions. In particular, we have that $K=[0,k_m]$, $\tau_t:=f(k_t)-c_t^i-c_t^j$, $A_i:=[0,\theta^if(k_m)]$, and $\mathrm{W}^i$ is given in recursive form by \eqref{eq:Wirec}.

 Let $g_i:K \to K$ denote a stationary Markovian strategy for player $i$ and $V_i: K \to \mathbb{R}_+$ the value function associated with \eqref{eq:mainOL} under this information structure. Both functions belong to the space of real valued $C^2(K)$ functions. Optimal strategies are characterized by the first-order and envelope conditions, which are the analogues to \eqref{eq:EulerOL}--\eqref{eq:envelOL} in the precommitment case,
\begin{align}
\label{eq:EulerM}
&u_i'(g_i(k))+\alpha_i'(g_i(k))V_i(f(k)-g_i(k)-g_j(k))
=\alpha_i(g_i(k))V_i'(f(k)-g_i(k)-g_j(k)),\\[5pt]
\label{eq:envelM}
&V_i'(k)=\left[u_i'(g_i(k))+\alpha_i'(g_i(k))V_i(f(k)-g_i(k)-g_j(k))\right]\left[f'(k)-g_j'(k)\right].
\end{align}
Note that $g_j'(k)$ enters player $i$'s equilibrium condition, adding a new element of strategic interaction. It is easy to show that $V_i$ is strictly increasing, which implies $f'(k)-g_j'(k)>0$ in equilibrium. Moreover, later we show that $g_j'(k) > 0$ and $0 < f'(k)-g_j'(k) < f'(k)$ hold in a neighborhood of any stationary point, hence the marginal value of an additional unit of capital for player $i$, measured by $V_i'(k)$, decreases compared to the precommitment solution. The last optimality condition is given by the accumulation equation for aggregate capital, $k' = f(k)-g_i(k)-g_j(k)$, where $k'$ denotes next period's value.

Combining \eqref{eq:EulerM} with \eqref{eq:envelM} and differentiating the result, we obtain the second-order condition for an optimum
\begin{align}
\label{eq:SOCEulMi}
u_i''(g_i(k))&+\alpha_i''(g_i(k))V_i(k')
-2\alpha_i'(g_i(k))V_i'(k')+\alpha_i(g_i(k))V_i''(k') \leq 0,
\end{align}
where $V_i'$ is given by \eqref{eq:envelM} and
\begin{align}
\label{eq:SOCenvMi}
V_i''(k)&=[u_i''(g_i(k))+\alpha_i''(g_i(k))V_i(k')](f'(k)-g_j'(k))^2 \nonumber\\
&+[u_i'(g_i(k))+\alpha_i'(g_i(k))V_i'(k')](f''(k)-g_j''(k)).
\end{align}
Hence \eqref{eq:SOCEulMi} is satisfied if the value function is increasing and concave, the latter depending on the sign of $(f''(k)-g_j''(k))$ at the optimum. In other words, a sufficient condition for the concavity of player $i$'s value function  $V_i$ is that player $j$'s optimal strategy $g_j$ must be less concave than the production function $f$. However, the condition can be relaxed, since the first term in \eqref{eq:SOCenvMi} is strictly negative under the current assumptions.

Optimality conditions can be written in terms of the functionals defined in Section \ref{sec:nasheq}. For $x \in K$, $y \in [0,f(x)]$, and $z \in [0,f(y)]$, use \eqref{eq:envelM} to eliminate $V_i'(y)$ from \eqref{eq:EulerM}, so the corresponding first-order conditions for each player can be written as
\begin{subequations}
\label{eq:FOCM}
\begin{align}
\label{eq:FOCMi}
U^i(g_i(x),g_j(x),x)&=\alpha_i(g_i(x))U^i(g_i(y),g_j(y),y)\left[f' (y)-g_j'(y)\right],\\[5pt]
\label{eq:FOCMj}
U^j(g_j(x),g_i(x),x)&=\alpha_j(g_j(x))U^j(g_j(y),g_i(y),y)\left[f'(y)-g_i'(y))\right].
\end{align}
\end{subequations}
There is no obvious method of attack on this particular system of differential functional equations. Proving existence seems quite challenging and raises an interesting issue which might be worth exploring in future research. Here we describe an approach that seems suitable for the problem at hand and has not been applied in economics, at least that we are aware of. The first step consists in transforming \eqref{eq:FOCM} into a system of integral equations. Then, a solution is obtained as a \emph{coupled fixed point} of a nonlinear multivalued mapping on an appropriately defined space.

The concept of a coupled fixed point was introduced by \citet{guolaksh87}, and the theory of coupled fixed points in ordered metric spaces is a growing research field, recently developed by \citet{bhasklaksh06}, \citet{nietoetal07}, and \citet{lakshciric09}. Recent applications to the solution of  nonlinear integral equations can be found in  \citet{vanthuan11} and \citet{petrucetal13}. The theory combines elements from contractive applications and monotone  operators, both of which are commonly used in fixed point theory. It also points out some directions for the solution methods developed in Section \ref{sec:solmeth}.

Given that $f(0)=0$ by \ref{asm:prodfp} and assuming $g_i(0)=g_j(0)=0$, it is possible to express the system given in \eqref{eq:FOCM} as
\begin{align*}
g_j(y)&= f(y) - \int_0^y\frac{U^i(g_i(x),g_j(x),x)}{\alpha_i(g_i(x))U^i(g_i(s),g_j(s),s)}\,ds,\\[5pt]
g_i(y)&= f(y) - \int_0^y \frac{U^j(g_j(x),g_i(x),x)}{\alpha_j(g_j(x))U^j(g_j(s),g_i(s),s)}\,ds,
\end{align*}
which has the form of a system of nonlinear Volterra integral equations in $g_i(y)$ and $g_j(y)$, with $(g_i(x),g_j(x),x)$ taken as given.\footnote{A nonlinear Volterra integral equation is a functional equation of the form
\begin{equation*}
u(t)=\phi(t)+\int_a^t\!\!K(t,s,u(s))\,ds, \quad t \in [a,b],
\end{equation*}
where $u(t)$ is the function to be determined.
}
Let $C(K)$ be the space of bounded and continuous functions defined on the closed interval $K$. Now define a couple of operators on the product space, $\mathcal{V}^i: C(K) \times C(K) \to C(K)$ and $\mathcal{V}^j: C(K) \times C(K) \to C(K)$ as
\begin{subequations}
\label{eq:Voltops}
\begin{align}
\label{eq:Voltopi}
\mathcal{V}^i[g_i,g_j](y)&= f(y) - \int_0^y\frac{U^i(g_i(x),g_j(x),x)}{\alpha_i(g_i(x))U^i(g_i(s),g_j(s),s)}\,ds,\\[5pt]
\label{eq:Voltopj}
\mathcal{V}^j[g_j,g_i](y)&= f(y) - \int_0^y\frac{U^j(g_j(x),g_i(x),x)}{\alpha_j(g_j(x))U^j(g_j(s),g_i(s),s)}\,ds.
\end{align}
\end{subequations}
These belong to a class of nonlinear integral operators known as \emph{Volterra operators}. Suppose there exists a metric $d$ such that $(C(K),d)$ is a complete metric space, and endow the product space $C(K) \times C(K)$ with an order relation denoted by $\preceq$. A solution to the system of integral equations $(g_i,g_j) \in C(K) \times C(K)$ is called a \emph{coupled fixed point} of the operators $\mathcal{V}^i$ and $\mathcal{V}^j$ in the product space if $g_i=\mathcal{V}^j\left[g_i,g_j\right]$ and $g_j=\mathcal{V}^i\left[g_j,g_i\right]$.

It is apparent that any fixed point of \eqref{eq:Voltops} will be a function of both \emph{current and future values} of the aggregate state, i.e., $x$ and $y$, something that may restrict the applicability of this method to characterize a solution. In fact, solving a system of nonlinear Volterra integrals often requires numerical methods. We will circumvent these difficulties by extending the solution approach introduced in Section \ref{sec:precommit} for the precommitment game. Specifically, we construct a map relating current consumption and capital holdings with future consumption, keeping the Markovian information structure intact.

\subsection{A Solution Method Based on First-Order Conditions}
\label{sec:solmeth}

The solution method proposed in this section combines  different elements and techniques from the literature that were mentioned earlier, such as implicit programming, the dynamical system approach to concave games, and, particularly, some results from equilibria of non-optimal economies in dynamic general equilibrium. But, as far as we know, this is a novel approach to solve dynamic games with heterogeneous players.

Let $G^i: K \times K \to S^i$ be a function representing current consumption for player $i$ when the current state is $k$ and the state next period is expected to follow a stationary rule $k' = g(k)$, and let $G_l^i$ denote the derivative of $G^i$ with respect to its $l$-th argument, with $l=1,2$, and $j\neq i=1,2$. In this and the following two sections, we characterize these functions and perform a stability analysis of the dynamical system. More precisions on the aggregate savings function $g$ will be given later in Section \ref{sec:stateqM}. 
We begin by transforming \eqref{eq:FOCM} into a system of nonlinear partial differential equations. For this, we replace $g_i(x)$ with $G^i(x,y)$, $g_i(y)$ with $G^i(y,z)$, and $g_i'(y)$ with $G_1^i(y,z)$ for $i=1,2$
\begin{subequations}
\label{eq:focM}
\begin{align}
\label{eq:focMi}
U^i(G^i(x,y),G^j(x,y),x)&=\alpha_i(G^i(x,y))
U^i(G^i(y,z),G^j(y,z),y)[f'(y)-G_1^j(y,z)],\\
\label{eq:focMj}
U^j(G^j(x,y),G^i(x,y),x)&=\alpha_j(G^j(x,y))
U^j(G^j(y,z),G^i(y,z),y)[f'(y)-G_1^i(y,z)].
\end{align}
\end{subequations}
Given that we consider stationary strategies, aggregate states $y \in [0,f(x)]$ and $z \in [0,f(y)]$ satisfy the following conditions for each $x \in K$,
\begin{align}
\label{eq:aggMy}
y&=f(x)-G^i(x,y)-G^j(x,y),\\[5pt]
\label{eq:aggMz}
z&=f(y)-G^i(y,z)-G^j(y,z).
\end{align}
Here we can see how the implicit programming approach is being applied to the current problem. For instance, equation \eqref{eq:aggMy} determines the evolution of the aggregate state from $x$ to $y$, which in turn also depends on an endogenous variable, because $y$ is an argument of the players' Markov strategies. To sum up, \eqref{eq:focM}, \eqref{eq:aggMy}, and \eqref{eq:aggMz}, represent the  optimality conditions when players take the values of $(x,y,z)$ as given.

In order to define a stationary equilibrium for $\mathcal{M}$,   we implicitly define the savings function $g:K \to K$, by replacing $y$ with $g(x)$ and $z$ with $g^2(x):=g(g(x))$ in \eqref{eq:aggMy}--\eqref{eq:aggMz}, which implies
\begin{align}
\label{eq:hdef1}
g(x)&=f(x)-G^i\left(x,g(x)\right)-G^j\left(x,g(x)\right),
\\[5pt]
\label{eq:hdef2}
g^2(x)&=f(g(x))-G^i\left(g(x),g^2(x)\right)-G^j\left(g(x),g^2(x)\right).
\end{align}
Note that if $G^i$ and $G^j$ are continuous on $K \times K$, then \eqref{eq:hdef2} is implied by \eqref{eq:hdef1}. These conditions allow us to define a map
\begin{equation}
\label{eq:Psi}
\Psi\left[k,g(k),g^2(k)\right]=0, \qquad \text{for all } k \in K,
\end{equation}
so that a stationary Markov equilibrium can be characterized in terms of this map as a function $g(k)$ satisfying \eqref{eq:hdef1}. The following result shows that a fixed point of $g$ on $K$ is a stationary point of the dynamical system induced by equilibrium Markov strategies. For a proof, see Appendix \ref{sec:appM}.
\begin{prop}
\label{prp:kstardef}
If $g(k)$ is a continuous function satisfying \eqref{eq:Psi},  then $g$ has a stationary point $k^* \in K$ defined by
\begin{subequations}
\label{eq:kstardef}
\begin{align}
\label{eq:kstar1}
\alpha_i\left(G^i(k^*,k^*)\right)[f'(k^*)-G_1^j(k^*,k^*)]&=1,
\\[5pt]
\label{eq:kstar2}
\alpha_j\left(G^j(k^*,k^*)\right)[f'(k^*)-G_1^i(k^*,k^*)]&=1,
\\[5pt]
\label{eq:kstar3}
f(k^*)-G^i(k^*,k^*)-G^j(k^*,k^*)&=k^*,
\end{align}
\end{subequations}
where $G^i(k,k')$ and $G^j(k,k')$ are solutions to  \eqref{eq:focM} on some open rectangle $I \times I' \subset K \times K$ containing the point $(k^*,k^*)$.
\end{prop}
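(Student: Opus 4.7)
The plan is to split the proof into a fixed-point existence step and an algebraic identification step. In the first step I would apply Brouwer's theorem to the continuous map $g:K\to K$: since $K=[0,k_m]$ is a nonempty, convex, compact subset of $\mathbb{R}$ and $g$ is continuous by hypothesis, the existence of some $k^*\in K$ with $g(k^*)=k^*$ follows once one verifies that $g(K)\subseteq K$. The self-map property follows from \eqref{eq:hdef1}: nonnegativity of the consumption choices $G^i(x,g(x))$ and $G^j(x,g(x))$ yields $g(x)\le f(x)\le f(k_m)=k_m$ for every $x\in K$, while the feasibility requirement that aggregate consumption does not exceed current output gives $g(x)\ge 0$.

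In the second step, at such a fixed point we also have $g^2(k^*)=g(k^*)=k^*$, so I would substitute $(x,y,z)=(k^*,k^*,k^*)$ into \eqref{eq:aggMy} and \eqref{eq:focM}. Equation \eqref{eq:aggMy} reduces immediately to \eqref{eq:kstar3}. Setting $x=y=k^*$ in \eqref{eq:focMi} produces
\begin{equation*}
U^i(G^i(k^*,k^*),G^j(k^*,k^*),k^*)=\alpha_i(G^i(k^*,k^*))\,U^i(G^i(k^*,k^*),G^j(k^*,k^*),k^*)\,[f'(k^*)-G_1^j(k^*,k^*)],
\end{equation*}
and since $U^i$ is strictly positive at interior consumption levels by \ref{asm:alpha} and \ref{asm:uprime}, one may divide the common factor from both sides to recover \eqref{eq:kstar1}. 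The symmetric substitution into \eqref{eq:focMj} delivers \eqref{eq:kstar2}.

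The main obstacle is the self-map verification at the first step. Although $g$ is assumed continuous, the relation \eqref{eq:hdef1} is implicit in $g(x)$ on both sides, so one must invoke the local existence and continuous differentiability of $G^i$ and $G^j$ on the open rectangle containing $(k^*,k^*)$ provided in the hypothesis to make sense of the derivative $G_1^j(k^*,k^*)$ appearing in \eqref{eq:kstar1} and to confirm that the implicit definition of $g$ produces values inside $K$ along the candidate trajectory $k\mapsto(k,g(k))$. Once this bookkeeping between the implicit definition of $g$ and the regularity of $(G^i,G^j)$ is cleared up, the remainder is routine algebra driven by the positivity of $U^i$ and $\alpha_i$.
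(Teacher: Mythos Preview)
Your second step---substituting $(x,y,z)=(k^*,k^*,k^*)$ into \eqref{eq:focM} and \eqref{eq:aggMy}, then cancelling the strictly positive common factor $U^i$---is exactly the paper's argument. The paper's proof is brief: it assumes $g$ has a stationary point $k^*>0$, notes that $\Psi(k^*,k^*,k^*)=0$, reads off \eqref{eq:kstar3} from \eqref{eq:hdef1}, and obtains \eqref{eq:kstar1}--\eqref{eq:kstar2} from \eqref{eq:focM} using $U^i,U^j>0$ on $(0,k_m]$.

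The difference lies in your first step. The paper does \emph{not} prove existence of the fixed point in this proposition; its proof opens with ``If $g(k)$ has a stationary point $k^*>0$, then\ldots'' and moves straight to the characterization. Despite the phrasing of the statement, the paper treats this result purely as a characterization: existence of a stationary Markovian equilibrium is handled separately and later, via Schauder's theorem applied to an operator $T$ on a space of Lipschitz functions (Theorem~\ref{thm:fixedpT}). Your Brouwer argument is a natural reading of the stated claim, and the ingredients you cite (nonnegativity and feasibility of $G^i,G^j$, monotonicity of $f$ with $f(k_m)=k_m$) are the right ones for a self-map check. But the obstacle you already flag is real: $G^i$ and $G^j$ are only guaranteed on a local rectangle $I\times I'$ around $(k^*,k^*)$, so verifying $g(K)\subseteq K$ globally would require more than the proposition's hypotheses supply. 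The paper simply sidesteps this by deferring existence.
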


Characterizing equilibria and their stability properties in this way, as we have seen with open-loop equilibria, involves substantial notation and a significant number of preliminary calculations. For this reason, and to allow comparison with previous results, we use the same definitions and notation introduced in Section \ref{sec:precommit}. However, a word of caution is in order: variables with the same functional form may have different underlying structures, depending on whether   they are defined in terms of an open-loop equilibrium or a Markovian equilibrium. We will attempt to avoid any ambiguity, trying to keep the advantages of having a common notation for both types of equilibria.

In the remaining of this section, we show that for $(k,k')$ sufficiently close to a stationary point $(k^*,k^*)$, Markov optimal strategies $G^i,G^j$ can be restricted to the space of $C^2(I \times I)$ functions that satisfy the following properties:
\begin{enumerate}[label=(M\arabic*)]
\item\label{asm:M1} $G^i(k,k') \geq 0$, $G^j(k,k') \geq 0$, and at least one inequality is strict,
\item\label{asm:M2} $G^i(k,k') + G^j(k,k') < f(k)$,
\item\label{asm:M3} $0 < G_1^i(k,k') < f'(k)$, $0 < G_1^j(k,k') < f'(k)$.
\end{enumerate}
The partial derivatives of $U^i$ have the same functional form as in \eqref{eq:partialU}, but the functions $v_i$ must be replaced by $V_i$. Then, evaluated at a stationary point $k^* \in K$, we have
\begin{align*}
U_1^i=W^i-\frac{\alpha_i'}{\alpha_i}\,U^i < 0, \quad
U_2^i=-\frac{\alpha_i'}{\alpha_i}\,U^i < 0, \quad \text{and} \quad
U_3^i=\frac{\alpha_i'}{\alpha_i}\,U^if' > 0,
\end{align*}
where $U^i:=u_i'+\alpha_i'V_i$ and $W^i:=u_i''+\alpha_i''V_i$.

We begin by differentiating \eqref{eq:focM} with respect to $x$ and evaluating the result at $x=y=z=k^*$. After substitutions, this leads to a system of equations in the unknowns $G_1^i$ and $G_1^j$, whose coefficients are functions of the model's primitives evaluated at a stationary point,\footnote{Recall that $\delta_i = \alpha_i'/\alpha_i$ and $\omega_i = \alpha_i'/\alpha_i - W^i/U^i$.}
\begin{equation*}
\begin{bmatrix}
\delta_i+\omega_i & \delta_i\\[5pt]
\delta_j & \delta_j+\omega_j
\end{bmatrix}
\begin{bmatrix}
G_1^i \\[5pt]
G_1^j
\end{bmatrix}
=
\begin{bmatrix}
\delta_if'\\[5pt]
\delta_jf'
\end{bmatrix}
.
\end{equation*}
Let $\Delta_1$ be the determinant of the coefficient matrix given by $\Delta_1:=\omega_i\omega_j+\delta_i\omega_j+\omega_i\delta_j$. It is clear that $\Delta_1 > 0$, so the solution is
\begin{subequations}
\label{eq:G1}
\begin{align}
\label{eq:G1i}
G_1^i &=\frac{\delta_i\omega_j}{\Delta_1}f'=\frac{\delta_i\omega_j}{\omega_i\omega_j+\delta_i\omega_j+\omega_i\delta_j}\,f',\\[5pt]
\label{eq:G1j}
G_1^j &=\frac{\omega_i\delta_j}{\Delta_1}f'=\frac{\omega_i\delta_j}{\omega_i\omega_j+\delta_i\omega_j+\omega_i\delta_j}\,f'.
\end{align}
\end{subequations}
Since $\delta_i > 0$ and $\omega_i > 0$, it follows that $0 < G_1^i < f'$ and $0 < G_1^j < f'$, as claimed in \ref{asm:M3}. We can think that $G_1^i$ and $G_1^j$ represent (in terms of output) direct effects of the strategic interaction between $i$ and $j$, so that $(f'-G_1^j)$ is the ``net marginal benefit'' for player $i$ of investing in capital, taking into account player $j$'s best response to this increase in $k$. A similar interpretation can be given to $(f'-G_1^i)$ for player $j$. We will refer to $G_1^i$ and $G_1^j$ as \emph{first-order effects}, since they depend exclusively on the primitives of the model.

Next, differentiate \eqref{eq:focM} with respect to $y$, which gives a way to solve $G_2^i$ and $G_2^j$ in terms of $G_1^i$ and $G_1^j$. This requires the introduction of some new parameters defined as
\begin{equation}
\label{eq:nuijdef}
\nu_{ij}:=\frac{\alpha_i'}{\alpha_i}(f'-1)+\frac{f''-G_{11}^j}{f'-G_1^j} \quad \text{and} \quad
\nu_{ji}:=\frac{\alpha_j'}{\alpha_j}(f'-1)+\frac{f''-G_{11}^i}{f'-G_1^i}.
\end{equation}
Note the similarity between $\nu_{ij}$, $\nu_{ji}$ and $\eta_i$, $\eta_j$, respectively, which we used extensively to characterize equilibria in open-loop strategies. This will be explored further in Section~\ref{sec:discussM}. The system of linear equations in the unknowns $G_2^i$ and $G_2^j$ is given by
\begin{equation*}
\begin{bmatrix}
\delta_i+\omega_i & \delta_i\\[5pt]
\delta_j & \delta_j+\omega_j
\end{bmatrix}
\begin{bmatrix}
G_2^i \\[5pt]
G_2^j
\end{bmatrix}
=
\begin{bmatrix}
\omega_i & & \delta_i\\[5pt]
\delta_j & & \omega_j\\
\end{bmatrix}
\begin{bmatrix}
G_1^i \\[5pt]
G_1^j \\
\end{bmatrix}
-
\begin{bmatrix}
\delta_i+\nu_{ij}\\[5pt]
\delta_j+\nu_{ji}\\
\end{bmatrix}
.
\end{equation*}
The partial derivatives $G_2^i$ and $G_2^j$ may be interpreted as the response of each player to a change in expected future returns to capital induced by a change in current returns (brought by a change in the current level of $k$). We call these \emph{second-order effects}, because they are combinations of $G_1^i$, $G_1^j$, $G_{11}^i$, and $G_{11}^j$. Using \eqref{eq:G1} to eliminate $G_1^i$ and $G_1^j$, the solution to this system of equations is given by
\begin{subequations}
\label{eq:G2}
\begin{align}
\label{eq:G2i}
G_2^i &= \frac{[\delta_i\omega_i\omega_j^2+\delta_i\delta_j(\omega_i\omega_j-\delta_i\omega_j+\omega_i\delta_j)]}{\Delta_1^2}\,f'-\frac{[\delta_i\omega_j+(\delta_j+\omega_j)\nu_{ij}-\delta_i\nu_{ji}]}{\Delta_1},\\[5pt]
\label{eq:G2j}
G_2^j &= \frac{[\omega_i^2\delta_j\omega_j+\delta_i\delta_j(\omega_i\omega_j+\delta_i\omega_j-\omega_i\delta_j)]}{\Delta_1^2}\,f'-\frac{[\omega_i\delta_j-\delta_j\nu_{ij}+(\delta_i+\omega_i)\nu_{ji}]}{\Delta_1}.
\end{align}
\end{subequations}

Finally, differentiating \eqref{eq:focM} with respect to $z$ yields
\begin{equation*}
\begin{bmatrix}
G_{12}^i \\ G_{12}^j\\
\end{bmatrix}
=
-
\begin{bmatrix}
f'-G_1^i & f'-G_1^j \\
\end{bmatrix}
\begin{bmatrix}
\delta_j & \omega_j\\
\omega_i & \delta_i\\
\end{bmatrix}
\begin{bmatrix}
G_2^i \\ G_2^j\\
\end{bmatrix}
\end{equation*}
and, after appropriate substitutions,
\begin{align*}
G_{12}^i &=-\frac{(\delta_jG_2^i+\omega_jG_2^j)(\omega_i\omega_j+\delta_i\omega_j)}{\Delta_1}\,f',\\[5pt]
G_{12}^j&=-\frac{(\omega_iG_2^i+\delta_iG_2^j)(\omega_i\omega_j+\omega_i\delta_j)}{\Delta_1}\,f',
\end{align*}
where $G_2^i$ and $G_2^j$ are given by \eqref{eq:G2i} and \eqref{eq:G2j}, respectively. We consider these \emph{third-order effects} that arise from the interaction between current and future returns to capital.

\subsection{Stability Analysis}
\label{sec:stabilityM}

We have seen that optimal strategies are characterized by complex interactions. But it is possible to determine the stability of a stationary point $k^*$ from the analysis of \emph{aggregate} responses, i.e., $(G_1^i+G_1^j)$ and $(G_2^i+G_2^j)$, because some of the interaction terms compensate each other. For this, we obtain a linear approximation of the dynamical system and apply the stable manifold theorem. Substitute \eqref{eq:aggMy} into \eqref{eq:aggMz} to obtain a reduced form for $\Psi$,
\begin{align}
\label{eq:Psireduc}
\Psi(x,y,z)= & f\left(f(x)-G^i(x,y)-G^j(x,y)\right)
-G^i\left(f(x)-G^i(x,y)-G^j(x,y),z\right)\\
&-G^j\left(f(x)-G^i(x,y)-G^j(x,y),z\right)-z=0.\nonumber
\end{align}
Next, differentiate \eqref{eq:Psireduc} with respect to $(x,y,z)$ and evaluate the results at a stationary point. In terms of $f$, $G^i$, and $G^j$, the partial derivatives are given by
\begin{subequations}
\label{eq:Psipar}
\begin{align}
\label{eq:Psi1}
\Psi_1^*&=(f'(k^*)-G_1^i(k^*,k^*)-G_1^j(k^*,k^*))^2,\\[5pt]
\label{eq:Psi2}
\Psi_2^*&=-(f'(k^*)-G_1^i(k^*,k^*)-G_1^j(k^*,k^*))(G_2^i(k^*,k^*)+G_2^j(k^*,k^*)),\\[5pt]
\label{eq:Psi3}
\Psi_3^*&=-(1+G_2^i(k^*,k^*)+G_2^j(k^*,k^*)).
\end{align}
\end{subequations}

Consider the polynomial function
\begin{equation}
\label{eq:poly2}
P(\lambda)=\Psi_1^* + \Psi_2^*\,\lambda + \Psi_3^*\,\lambda^2
\end{equation}
with $\Psi_3^* \neq 0$, so its discriminant is given by
$(\Psi_2^*)^2 - 4\Psi_1^*\Psi_3^* = (f'-G_1^i-G_1^j)^2(G_2^i+G_2^j+2)^2$.
From \eqref{eq:G1}, note that
\begin{equation}
\label{eq:asign}
f'-G_1^i-G_1^j=\left(1+\delta_i/\omega_i+\delta_j/\omega_j\right)^{-1}f'
\end{equation}
does not vanish near $k^*$. Then, it is assumed that $G_2^i+G_2^j \neq 2$ to obtain real and distinct characteristic roots. 

Adding \eqref{eq:G2i} to \eqref{eq:G2j}, it follows that
\begin{equation*}
G_2^i+G_2^j =-\frac{(\delta_i/\omega_i)^2+(\delta_j/\omega_j)^2}{(1+\delta_i/\omega_i+\delta_j/\omega_j)^2}\,f'-\frac{(1/\omega_j)(f''-G_{11}^i)+(1/\omega_i)(f''-G_{11}^j)}{(1+\delta_i/\omega_i+\delta_j/\omega_j)},
\end{equation*}
which, in principle, can take any real value.  Easy calculations show that these roots are
\begin{equation}
\label{eq:roots2}
\lambda_1^*= -(f'-G_1^i-G_1^j)  \quad \text{and} \quad
\lambda_2^*=\frac{f'-G_1^i-G_1^j}{1+G_2^i+G_2^j}.
\end{equation}
Given that the stable manifold theorem is valid for hyperbolic stationary points, we further assume that the eigenvalues $\lambda_1^*$ and $\lambda_2^*$ lie outside the unit circle.\footnote{The analysis of non-hyperbolic fixed points is based on the center manifold. See \citet[Ch. 4]{galor07} and references therein.}

Although we have imposed a number of restrictions on our parameter values, there are still several stability configurations that may arise from the analysis of  \eqref{eq:roots2}. To narrow them down, we use the same approach as in the case of precommitment strategies, evaluating the characteristic polynomial $P$ at three key points, namely, 0, 1 and $-1$, and exploiting some properties of its graph. For this, substitute \eqref{eq:Psipar} into \eqref{eq:poly2}, to obtain
\begin{align}
\label{eq:Ppoints}
P(0)&=(f'-G_1^i-G_1^j)^2,\nonumber\\[5pt]
P(1)&=\left[(f'-G_1^i-G_1^j)+1\right]\left[(f'-G_1^i-G_1^j)-(1+G_2^i+G_2^j)\right],\\[5pt]
P(-1)&=\left[(f'-G_1^i-G_1^j)-1\right]\left[(f'-G_1^i-G_1^j)+(1+G_2^i+G_2^j)\right]\nonumber.
\end{align}
Obviously, $P(0)>0$, but the signs of $P(-1)$ and $P(1)$  ultimately depend on parameter values. Introducing some notation (for this analysis only) will be helpful to visualize all  different cases and clarify the presentation of the results. Let $a:=(f'-G_1^i-G_1^j)$ and $b:=G_2^i+G_2^j$. From \eqref{eq:asign}, it follows immediately that $a > 0$. But, as we mentioned earlier, $b$ can take any value (with a couple of   exceptions pointed out above). From \eqref{eq:Ppoints},
\begin{equation*}
P(-1)=(a-1)[a+(1+b)], \quad P(0)=a^2, \quad \text{and} \quad P(1)=(a+1)[a-(1+b)].
\end{equation*}
All the graphs corresponding to these cases are parabolas, so additional information can be obtained from the principal coefficient of the characteristic polynomial, $-(1+b)$, and the $x$-coordinate of their vertices, $\lambda_v=-ab/2(1+b)$.  The stability analysis based on these formulas is summarized in Table \ref{tab:stable}.

\begin{table}[p]
\caption{Stability Analysis}
\label{tab:stable}
\centering
\begin{tabular}{|c|p{2.5cm}|c|c|c|c|}
\hline  {\textbf{Case}} & {\textbf{Parameters}} &
{$\bds{P(\lambda)}$} & {\textbf{Eigenvalues}} & {\textbf{Notes}} & {\textbf{Stab.}} \\
\hline
1 & \parbox[t][1.75cm]{2.5cm}{$0 < a < 1$, \\ $b > 0$}  & \raisebox{-.75\height} {\includegraphics[scale=0.15]{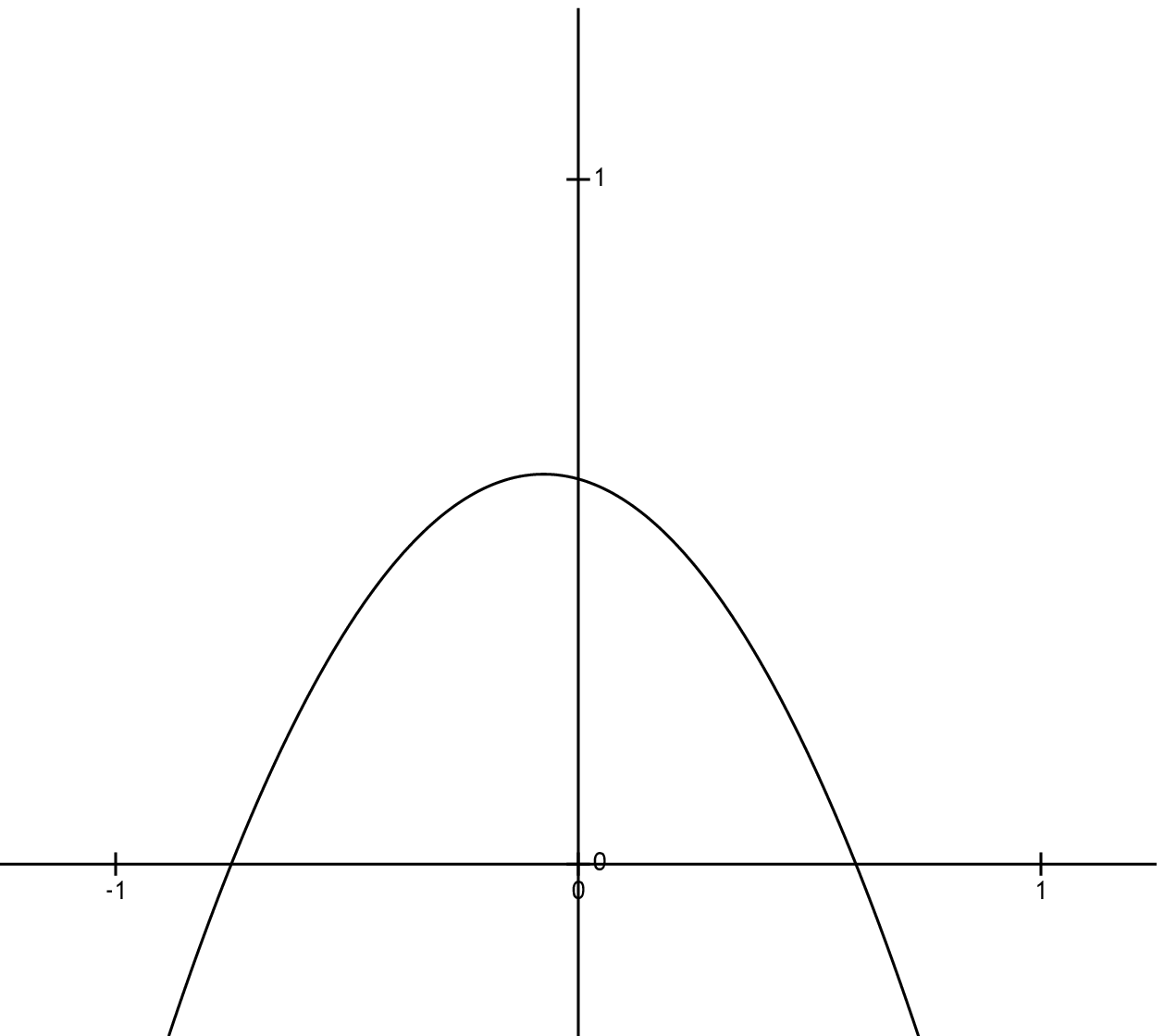}} & $-1 < \lambda_1^* < 0 < \lambda_2^* < 1$ & $|\lambda_1^*| > |\lambda_2^*|$ & stable\\
\hline
2 & \parbox[t][1.75cm]{2.5cm}{$a > 1$, $b > 0$, \\ $a-b < 1$}  & \raisebox{-.75\height} {\includegraphics[scale=0.15]{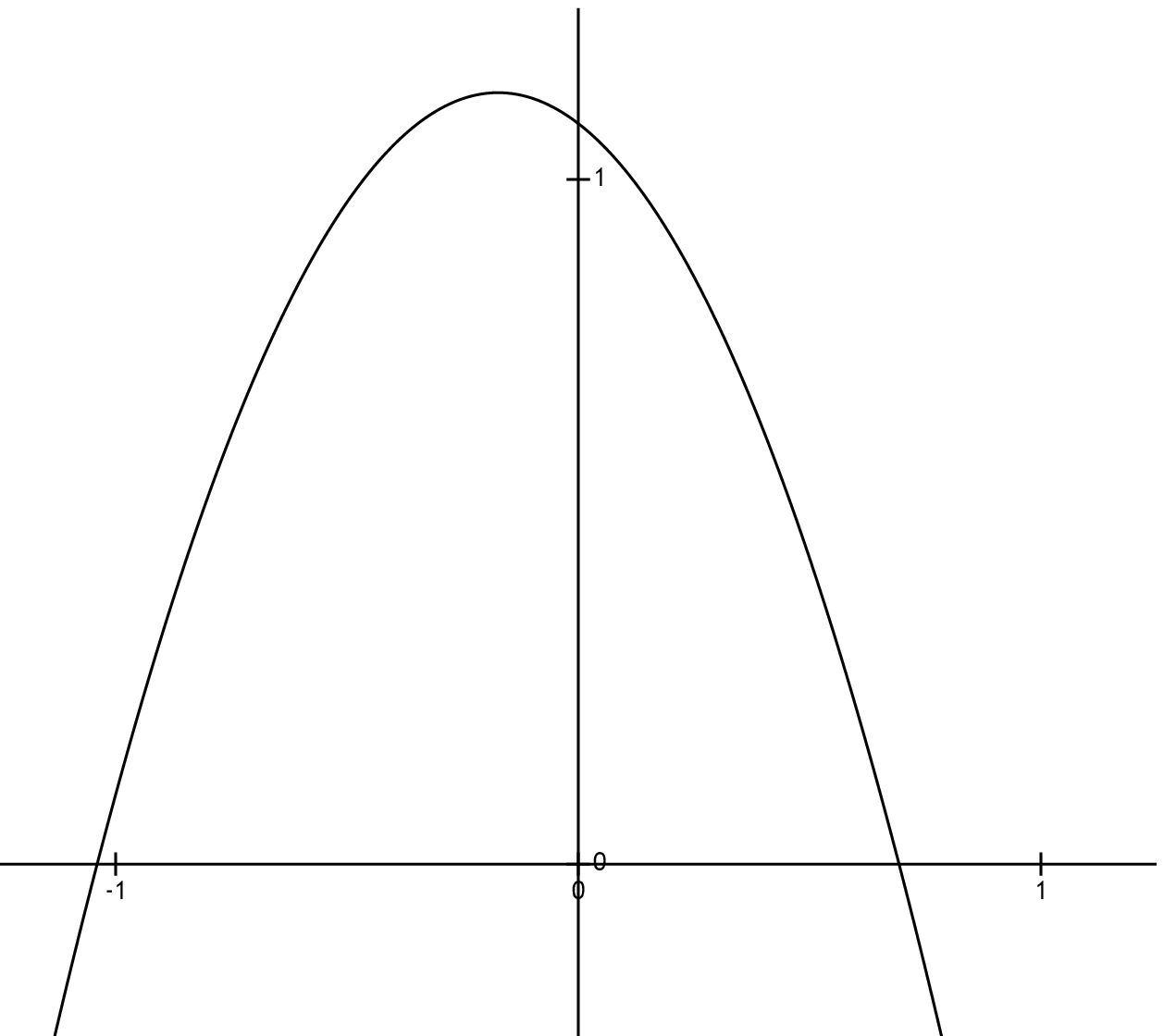}} & $\lambda_1^* < -1; \quad 0 < \lambda_2^* < 1$ & $|\lambda_1^*| > |\lambda_2^*|$ & saddle\\
\hline
3 & \parbox[t][1.75cm]{2.5cm}{$a > 1$, $b > 0$, \\ $a-b > 1$}  & \raisebox{-.75\height} {\includegraphics[scale=0.15]{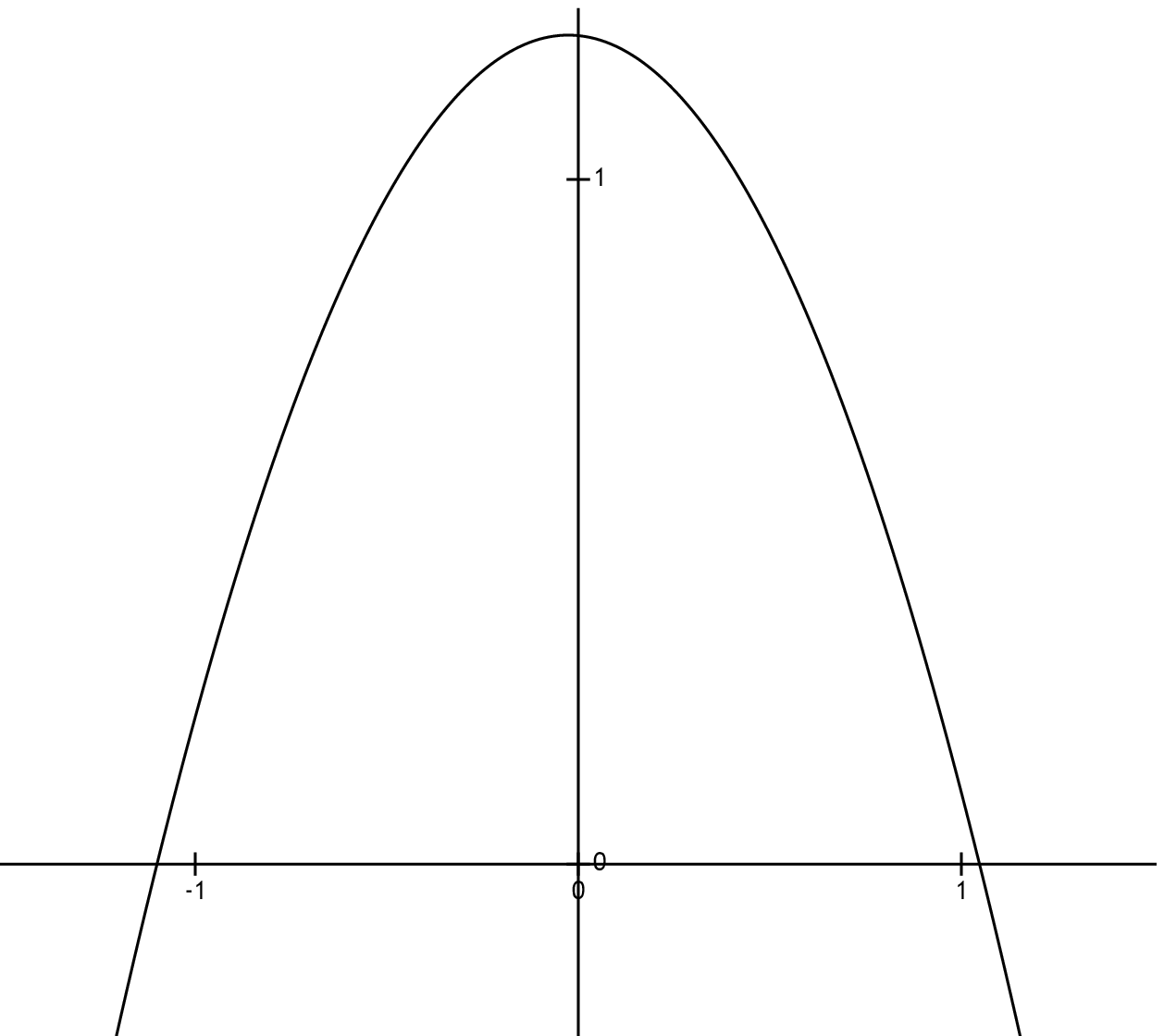}} & $\lambda_1^* < -1; \quad \lambda_2^* > 1$ & $|\lambda_1^*| > |\lambda_2^*|$ & unstable\\
\hline
4 & \parbox[t][1.75cm]{2.5cm}{$0 < a < 1$, \\ $-1 < b < 0$, \\ $a-b < 1$}  & \raisebox{-.75\height} {\includegraphics[scale=0.15]{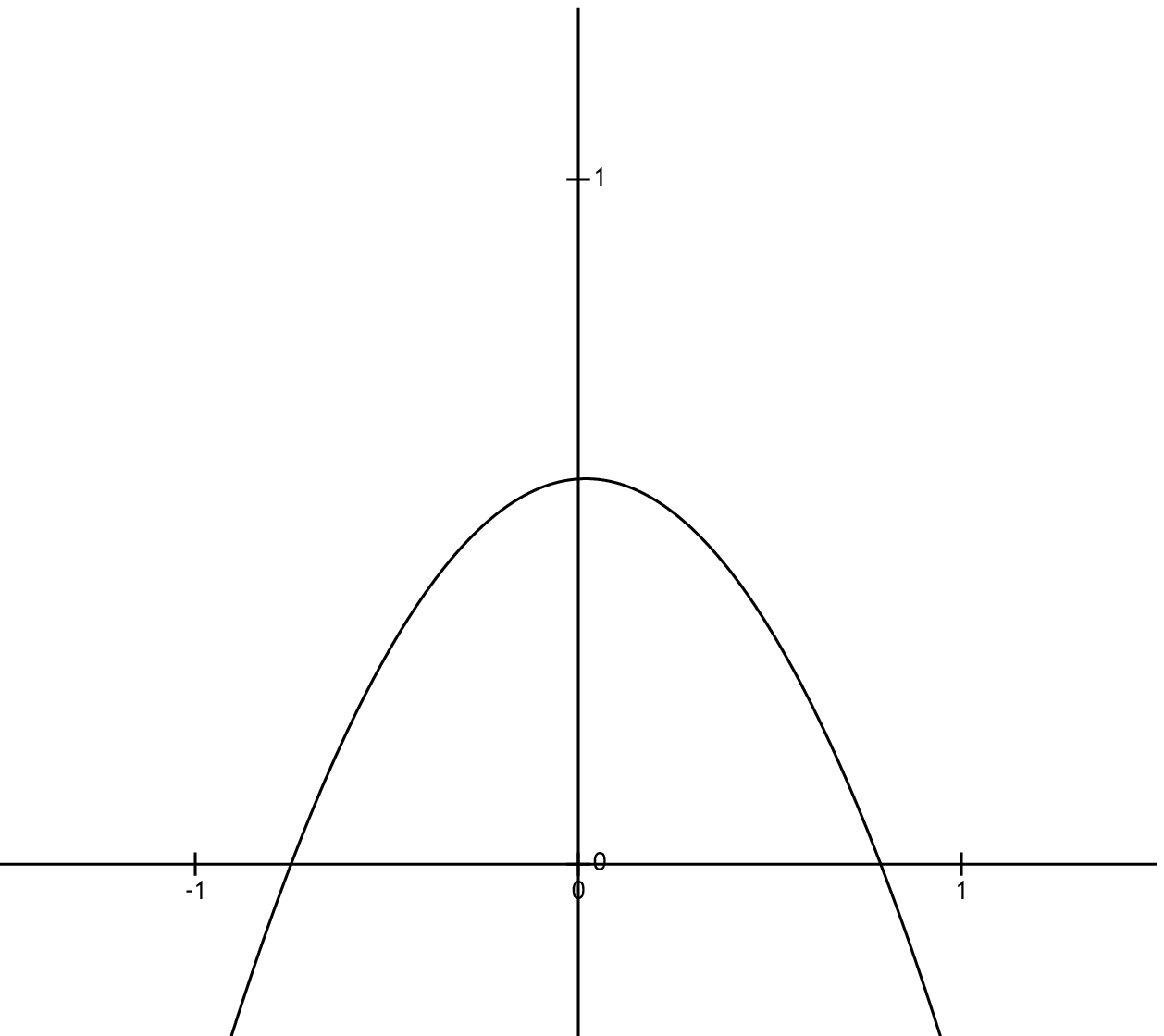}} & $-1 < \lambda_1^* < 0 < \lambda_2^* < 1$ & $|\lambda_1^*| < |\lambda_2^*|$  & stable \\
\hline
5 & \parbox[t][1.75cm]{2.5cm}{$0 < a < 1$, \\ $ -1 < b < 0$, \\ $a - b > 1$}  & \raisebox{-.75\height} {\includegraphics[scale=0.15]{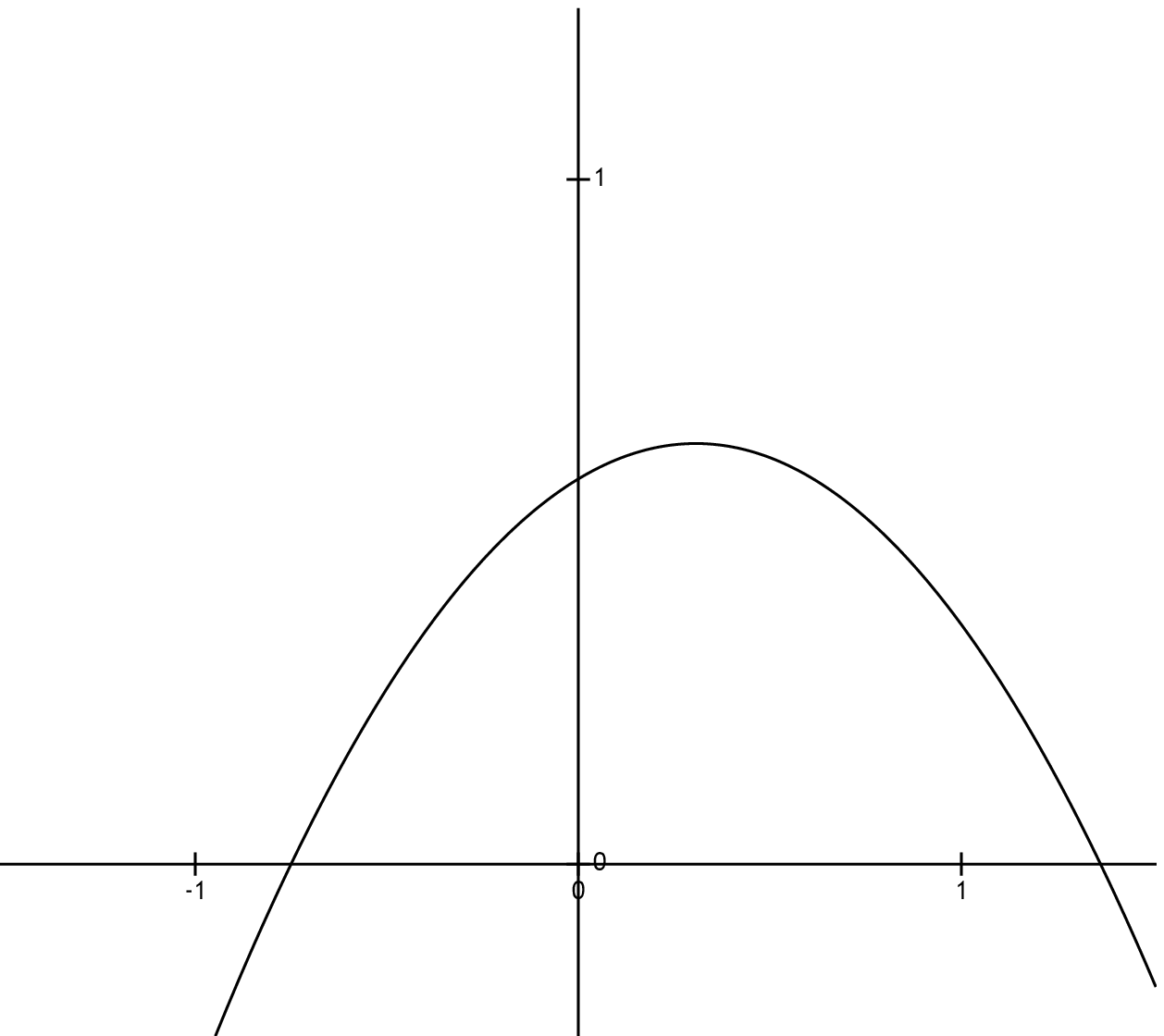}} & $-1 < \lambda_1^* < 0; \quad \lambda_2^* > 1$ & $|\lambda_1^*| < |\lambda_2^*|$  & saddle\\
\hline
6 & \parbox[t][1.75cm]{2.5cm}{$a > 1$, \\ $-1 < b < 0$}  & \raisebox{-.75\height} {\includegraphics[scale=0.15]{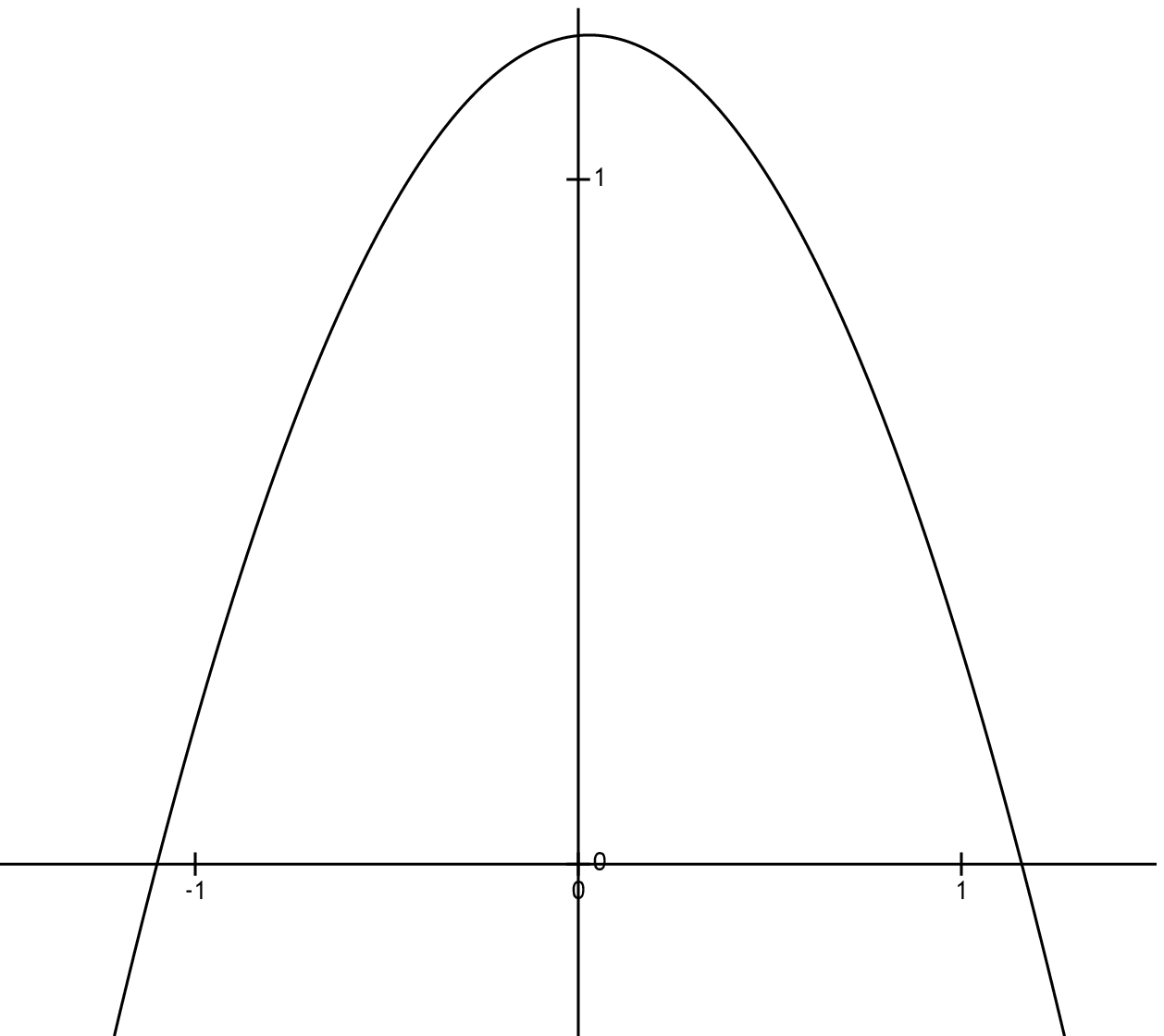}} & $\lambda_1^* < -1; \quad \lambda_2^* > 1$ & $|\lambda_1^*| < |\lambda_2^*|$  & unstable \\
\hline
7 & \parbox[t][1.75cm]{2.5cm}{$0 < a < 1$,\\ $ b < -1$,\\ $a+b < -1$} & \raisebox{-.75\height} {\includegraphics[scale=0.15]{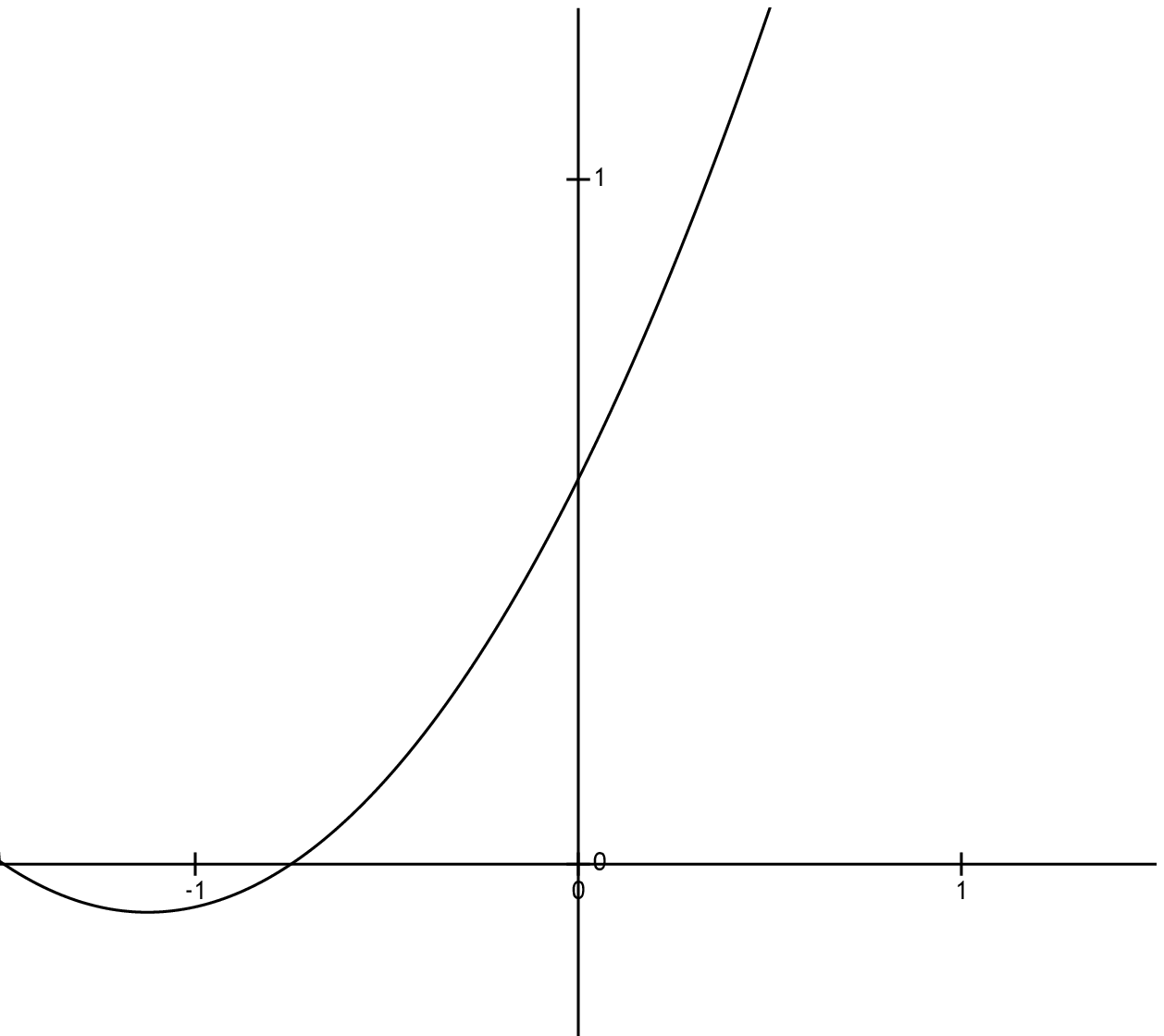}} & $-1 < \lambda_1^* < \lambda_2^* < 0$ & $|\lambda_1^*| > |\lambda_2^*|$  & stable \\
\hline
8 & \parbox[t][1.75cm]{2.5cm}{$0 < a < 1$,\\ $b < -1$,\\ $a + b > -1$} & \raisebox{-.75\height} {\includegraphics[scale=0.15]{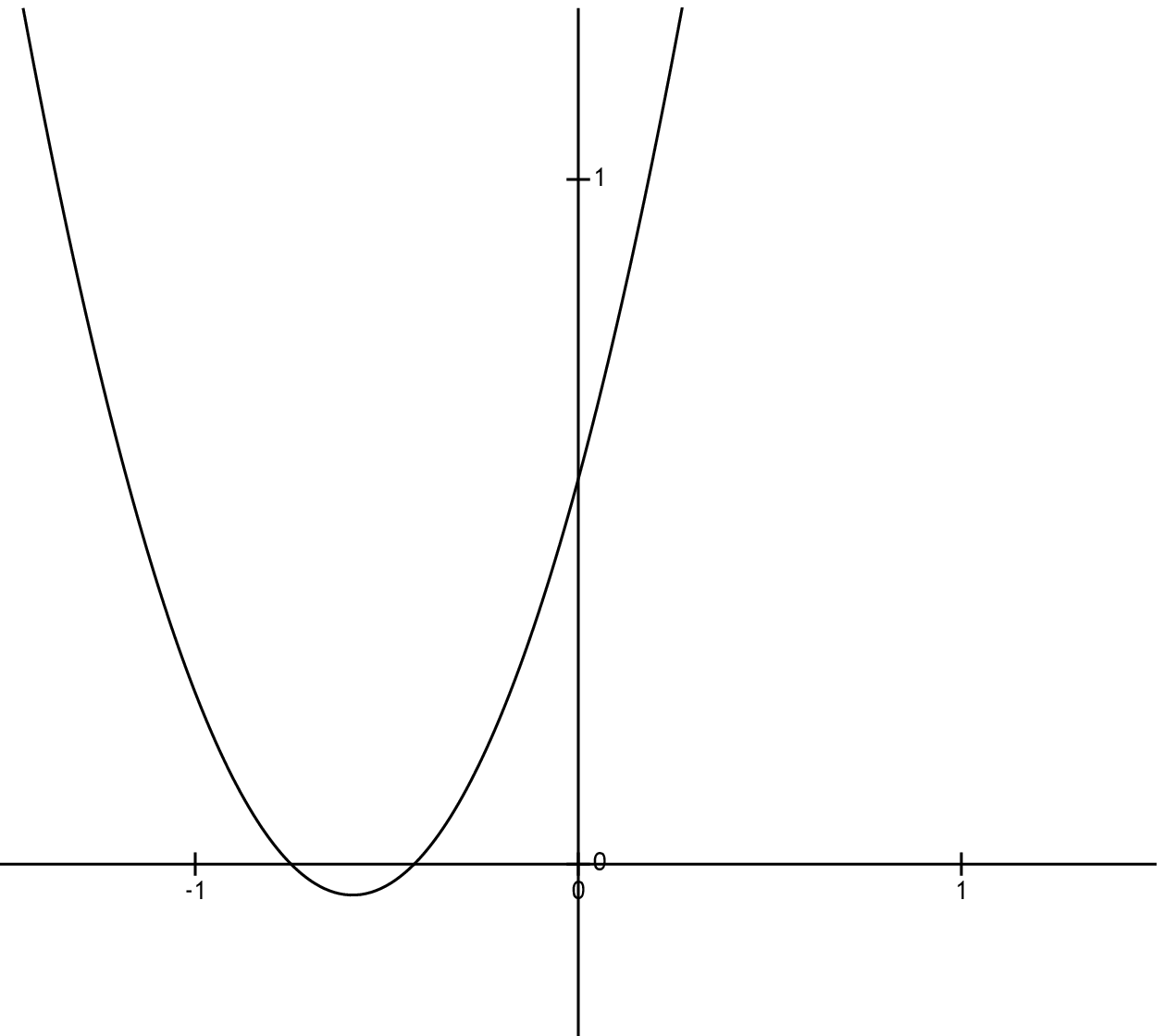}} & $\lambda_1^* < -1 < \lambda_2^* < 0 ;$ &  $|\lambda_1^*| > |\lambda_2^*|$ & saddle \\
\hline
9 & \parbox[t][1.75cm]{2.5cm}{$a > 1$,\\ $b < -1$ \\ $a + b > -1$} & \raisebox{-.75\height} {\includegraphics[scale=0.15]{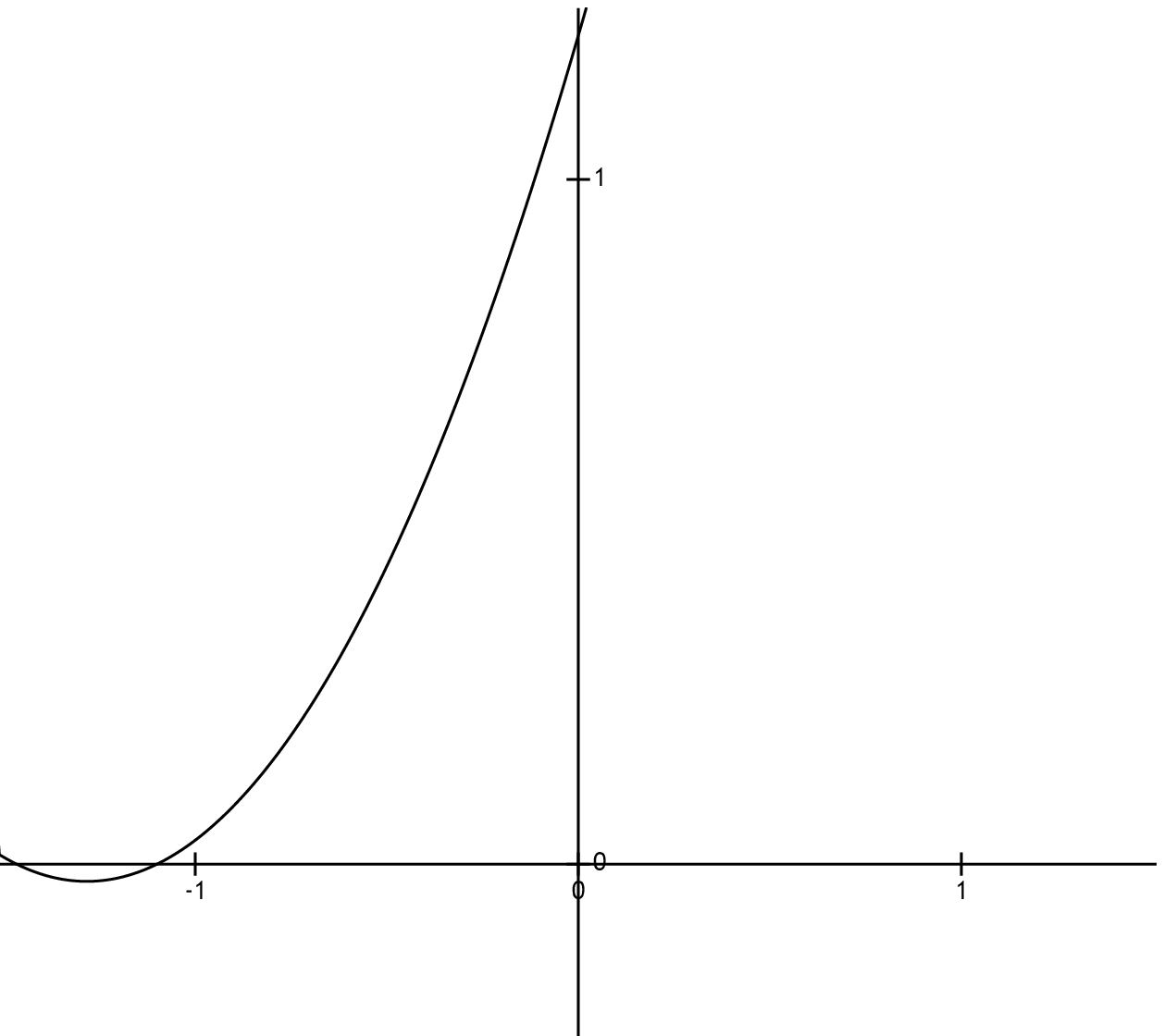}} & $\lambda_1^* < \lambda_2^* < -1$ & $|\lambda_1^*| > |\lambda_2^*|$  & unstable
\\
\hline
\end{tabular}
\end{table}

Note that oscillatory, i.e., nonmonotonic, behavior is pervasive, given that at least one eigenvalue is negative in all cases. This is in stark contrast with the majority of  models of optimal growth with many agents in the literature, with and without strategic interaction, where monotone trajectories of the state variable is one of their strongest predictions. In particular, cases 1, 4, and 7 are globally stable, in the sense that it is possible to define an interval $I \subset (0,k_m)$ and a continuous map $\psi$ such that for any $(k_0,k_1)$ that belong to the graph of $\psi$, the path of $k_t$ converges to the stationary point $k^*$. In other words, the stable manifold has $\dim =2$. Cases 1 and 7 imply oscillatory paths that end up converging, since the eigenvalue with largest modulus is negative, i.e., $|\lambda_1^*| > |\lambda_2^*|$. The reverse inequality holds in case 4, so the stable path may display oscillations during transitional dynamics, but convergence is eventually monotonic. In cases 2, 5, and 8, the stable manifold has $\dim=1$, so there is a continuous function, say $\psi_1$, such that for $k_0$ sufficiently close to $k^*$, the only points $(k_0,k_1)$ for which there is convergence are those on the graph of $\psi_1$. Given that the stable roots for cases 5 and 8 are negative, convergence is oscillatory; in case 2 there is monotone convergence. All remaining cases imply divergent paths, also with oscillations.

The stable manifold theorem provides an additional result to study the behavior of the aggregate saving function. There is a neighborhood $\mathcal{N}$ of the point $(k^*,k^*)$ and a continuously differentiable function $\psi: \mathcal{N} \to \mathbb{R}$ such that for $k_0$ sufficiently close to $k^*$, there exists $k_1$ with $(k_1,k_0) \in \mathcal{N}$ and $\psi(k_1,k_0)=0$. This is true if the Jacobian matrix $[D\psi(k^*,k^*)]$ has full rank. The linearized system $\Psi$ can be represented in terms of the coefficients  of the characteristic polynomial 
$P(\lambda)$ as
\begin{equation*}
\Psi_1^*(k_t-k^*) + \Psi_2^*(k_{t+1}-k^*) + \Psi_3^*(k_{t+2}-k^*)=0,
\end{equation*}
so that the behavior of $k_t$ near $k^*$ can be characterized by a $2 \times 2$ matrix
\begin{equation*}
A:=
\begin{bmatrix}
-\frac{\Psi_2^*}{\Psi_3^*}\hfill & -\frac{\Psi_1^*}{\Psi_3^*}\hfill\\[5pt]
\ 1 &\ 0\\
\end{bmatrix}
,
\end{equation*}
which is nonsingular. 

By Jordan decomposition, $A$ can be written as $A = B^{-1}\Lambda B$, where $B$ is a nonsingular matrix, and $\Lambda$ a diagonal matrix with the eigenvalues $\lambda_1^*$ and $\lambda_2^*$ in the main diagonal. Using the fact that $BA=\Lambda B$, the stable manifold can be characterized from the following system
\begin{equation*}
\begin{bmatrix}
b_{11} & b_{12}\\[5pt]
b_{21} & b_{22}\\
\end{bmatrix}
\begin{bmatrix}
-\frac{\Psi_2^*}{\Psi_3^*}\hfill & -\frac{\Psi_1^*}{\Psi_3^*}\hfill\\[5pt]
\ 1 &\ 0\\
\end{bmatrix}
=
\begin{bmatrix}
\lambda_1^* & 0\\[5pt]
0 & \lambda_2^*\\
\end{bmatrix}
\begin{bmatrix}
b_{11} & b_{12}\\[5pt]
b_{21} & b_{22}\\
\end{bmatrix}
,
\end{equation*}
and for any nonzero values of $b_{11}$ and $b_{22}$, we have that
\begin{equation*}
B=
\begin{bmatrix}
b_{11} &\left(\lambda_1^*+\frac{\Psi_2^*}{\Psi_3^*}\right)b_{11}\\[5pt]
\left(\lambda_2^*+\frac{\Psi_2^*}{\Psi_3^*}\right)^{-1}b_{22} & b_{22}\\
\end{bmatrix}
.
\end{equation*}

Note that the savings function must satisfy $\psi[g(k),k]=0$, hence its derivative is given by $g'(k^*)=-\psi^{-1}_1(k^*,k^*)\,\psi_2(k^*,k^*)$. The derivatives of the stable manifold at the stationary point are 
\begin{equation*}
\psi_1(k^*,k^*)=\left(\lambda_2^*+\tfrac{\Psi_2^*}{\Psi_3^*}\right)^{-1} b_{22}, \quad  \text{and} \quad \psi_2(k^*,k^*)=b_{22},
\end{equation*}
which implies 
\begin{equation*}
g'(k^*)=-\frac{\psi_2(k^*,k^*)}{\psi_1(k^*,k^*)}= \lambda_1^*.
\end{equation*}
If the system is governed by $\lambda_2^*$ instead, then simply replace $\lambda_1^*$ with $\lambda_2^*$ in all previous calculations.

\subsection{Stationary equilibrium}
\label{sec:stateqM}

In this section we establish the existence of a stationary equilibrium for the Markov game in terms of the map $\Psi$  defined in \eqref{eq:Psi}. For this, we focus on the first-order conditions for the players dynamic programs and incorporate the results obtained in the previous two sections.  The analysis is mainly based on \citet[Ch. 18.2]{stokeylucas89} for a dynamic nonoptimal economy with homogeneous agents, but there are enough differences to justify a separate treatment.

We need some preliminaries before presenting the main result. Assume that $G_2^i+G_2^j \neq 0$, so $\Psi_2^*$ does not vanish. By the implicit function theorem, there exists a rectangle $I \times I$ containing the point $(k^*,k^*)$ and a continuously differentiable map $H: I \times I \to I$ such that
\begin{equation}
\label{eq:Hdef}
\Psi(x,H(x,z),z)=0, \qquad \text{for all } (x,z) \in I \times I.
\end{equation}
Many important results depend on the characterization of this  mapping. The following monotonicity properties are introduced for that purpose.
\begin{defn}
\label{def:Hmonot}
A map $H:I \times I \to \mathbb{R}$ is said to be
\begin{enumerate}[label=(\alph*),leftmargin=*]
\item \emph{uniformly monotone} if $H(x,z)$ is increasing (resp.  decreasing) in $x$ for all $z \in I$, and increasing (resp. decreasing) in $z$ for all $x \in I$
\item \emph{mixed monotone} if $H(x,z)$ is increasing (resp.  decreasing) in $x$ for all $z \in I$ and decreasing (resp. increasing) in $z$, that is, for all $x \in I$.
\end{enumerate}
\end{defn}

All the stationary equilibria from Table \ref{tab:stable} can be grouped in terms of these properties.
From \eqref{eq:Hdef}, the partial derivatives of $H(x,z)$ evaluated at $(k^*,k^*)$, are $H_1^*=-\Psi_1^*/\Psi_2^*$ and $H_2^*=-\Psi_3^*/\Psi_2^*$. At the same time, it is well known that the eigenvalues $\lambda_1^*$ and $\lambda_2^*$ and the coefficients of $P$ are related by $\lambda_1^* + \lambda_2^* = -\Psi_2^*/\Psi_3^*$ and $\lambda_1^*\,\lambda_2^*=\Psi_1^*/\Psi_3^*$, therefore
\begin{align*}
H_1^*=\frac{\lambda_1^*\lambda_2^*}{\lambda_1^*+\lambda_2^*} \quad \text{and} \quad H_2^*=\frac{1}{\lambda_1^*+\lambda_2^*}.
\end{align*}
Results are shown in Table \ref{tab:Hmonot}.

\begin{table}[h!]
\caption{Properties of $H(x,z)$}
\label{tab:Hmonot}
\centering
\begin{tabular}{|c|c|c|c|c|}
\hline  {\textbf{Group}} & {\textbf{Cases}} & {\textbf{Partial Derivatives}} &  {\textbf{Monotonicity}} \\
\hline
I   & 1 -- 3 & $H_1^* > 0, \quad H_2^* < 0$ & Mixed \\
\hline
II  & 4 -- 6 & $H_1^* < 0, \quad H_2^* > 0$ & Mixed \\
\hline
III & 7 -- 9 & $H_1^* < 0, \quad H_2^* < 0$ & Uniform \\
\hline
\end{tabular}
\end{table}

Denote by $C(I)$ the space of bounded continuous functions $h:I \to \mathbb{R}$ with the sup norm, and define the operator $T$ on $C(I)$ by
\begin{equation}
\label{eq:Tdef}
Th(k)=H[k,h^2(k)], \qquad \text{for all } k \in I,
\end{equation}
where $H$ is the function defined in \eqref{eq:Hdef}. Clearly, a fixed point of $T$ is a solution to \eqref{eq:Psi} on $I$. We eliminate the unstable cases (3, 6, and 9) from Table \ref{tab:Hmonot}, because there has to be at least one eigenvalue smaller than one in absolute value to construct the relevant space of functions. The proof for $H$ uniformly  monotone is similar to the one in \citet{stokeylucas89}, so we omit it to concentrate on the cases where $H$ is mixed monotone.

Let $\varepsilon > 0$ and define $I_\varepsilon:=[k^*-\varepsilon,k^*+\varepsilon]$. Then, for any $\lambda \in (-1,1)$,  denote by $D_{|\lambda|}(I_\varepsilon)$ the space of continuous functions on $I_\varepsilon$ that have a stationary point $k^*$ and satisfy a Lipschitz condition with constant $|\lambda|$. More specifically, this set is given by
\begin{equation*}
D_{|\lambda|}(I_\varepsilon):=\big\{h \in C(I_\varepsilon): h(k^*)=k^*\ \text{and}\ |h(k)-h(k')| \leq |\lambda| |k - k'|,\ \text{all}\ k,k' \in I_\varepsilon\big\}.
\end{equation*}
We give full treatment to the case $-1 < \lambda_1^* < 0$ and $\lambda_2^* > 0$ (Group II in Table \ref{tab:Hmonot}), since the remaining cases can be handled in an analogous manner. Given that the stable manifold with $-1 < \lambda_1^* < 0$ implies that the policy function is strictly decreasing, we choose the subspace of $D_{|\lambda|}(I_\varepsilon)$ of decreasing functions,
\[
\upwt{D}_{|\lambda|}(I_\varepsilon):= \big\{h \in D_{|\lambda|}(I_\varepsilon) : \text{for all}\ k,k'\in I_\varepsilon,\ \text{if}\ k \leq k',\ \text{then}\ h(k') \leq h(k)\big\}.
\]
The space $\upwt{D}_{|\lambda|}(I_\varepsilon)$ endowed with the sup norm is a Banach space in the metric derived from this norm. The next result shows that for every $\lambda$ that lies in an interval specified below, the operator $T$ maps $\upwt{D}_{|\lambda|}(I_\varepsilon)$ to itself. See Appendix \ref{sec:appM} for a detailed proof.

\begin{lemm}
\label{lem:m1m2def}
There exists $-1 \leq \unwt{\lambda} < \lambda_1^*$ such that for every $\lambda \in (\unwt{\lambda},\lambda_1^*)$, there are constants $m_1, m_2$ with $ H_1^* < m_1 < 0$ and $0 < H_2^* < m_2$, that satisfy
\begin{equation}
\label{eq:m1m2def}
\lambda < H_1^* + H_2^*\lambda^2 < m_1 + m_2\lambda^2 < 0.
\end{equation}
\end{lemm}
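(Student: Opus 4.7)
The plan is to exploit a Vieta-type factorization of the quadratic $\lambda \mapsto H_1^\ast + H_2^\ast \lambda^2 - \lambda$, which collapses the lemma to an elementary one-variable calculation. Recall from the paragraph preceding the lemma that $H_1^\ast = -\Psi_1^\ast/\Psi_2^\ast = \lambda_1^\ast\lambda_2^\ast/(\lambda_1^\ast+\lambda_2^\ast)$ and $H_2^\ast = -\Psi_3^\ast/\Psi_2^\ast = 1/(\lambda_1^\ast+\lambda_2^\ast)$. In the Group~II regime to which the lemma applies (the case singled out for a detailed proof), we have $-1 < \lambda_1^\ast < 0 < \lambda_2^\ast$ and, because $H_2^\ast > 0$, $\lambda_1^\ast + \lambda_2^\ast > 0$.

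First I would prove the useful identity
\begin{equation*}
H_1^\ast + H_2^\ast\lambda^2 - \lambda \;=\; \frac{\lambda^2-(\lambda_1^\ast+\lambda_2^\ast)\lambda + \lambda_1^\ast\lambda_2^\ast}{\lambda_1^\ast+\lambda_2^\ast} \;=\; \frac{(\lambda-\lambda_1^\ast)(\lambda-\lambda_2^\ast)}{\lambda_1^\ast+\lambda_2^\ast},
\end{equation*}
which makes the leftmost inequality $\lambda < H_1^\ast + H_2^\ast\lambda^2$ completely transparent: for any $\lambda < \lambda_1^\ast$ both factors in the numerator are negative and the denominator is positive, so the right-hand side is strictly positive. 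In particular this inequality holds for the whole interval $(\unwt{\lambda},\lambda_1^\ast)$ once we define $\unwt{\lambda}$ below, and in fact for every $\lambda < \lambda_1^\ast$.

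Second, I would produce the upper bound $H_1^\ast + H_2^\ast\lambda^2 < 0$. Evaluating at $\lambda=\lambda_1^\ast$ gives $H_1^\ast + H_2^\ast(\lambda_1^\ast)^2 = \lambda_1^\ast < 0$, so $(\lambda_1^\ast)^2 < -H_1^\ast/H_2^\ast$. This means the parabola $\lambda\mapsto H_1^\ast+H_2^\ast\lambda^2$ is negative on the symmetric interval $\bigl(-\sqrt{-H_1^\ast/H_2^\ast},\sqrt{-H_1^\ast/H_2^\ast}\bigr)$, which strictly contains $\lambda_1^\ast$. Setting
\begin{equation*}
\unwt{\lambda} \;:=\; \max\!\left\{-1,\ -\sqrt{-H_1^\ast/H_2^\ast}\right\},
\end{equation*}
we have $\unwt{\lambda} \in [-1,\lambda_1^\ast)$, and for every $\lambda\in(\unwt{\lambda},\lambda_1^\ast)$ both $\lambda < H_1^\ast+H_2^\ast\lambda^2$ and $H_1^\ast+H_2^\ast\lambda^2 < 0$ hold.

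Finally, to construct $m_1, m_2$ I would use the slack obtained in the previous step. Given $\lambda\in(\unwt{\lambda},\lambda_1^\ast)$, set $\gamma := -(H_1^\ast + H_2^\ast\lambda^2) > 0$, and choose
\begin{equation*}
m_1 \;:=\; H_1^\ast + \tfrac{1}{3}\gamma, \qquad m_2 \;:=\; H_2^\ast + \frac{\gamma}{3\lambda^2}.
\end{equation*}
Then $m_1 > H_1^\ast$ and $m_2 > H_2^\ast$, so $m_1 + m_2\lambda^2 > H_1^\ast + H_2^\ast\lambda^2$; and since $H_2^\ast\lambda^2 > 0$ implies $\gamma < |H_1^\ast|$, we get $m_1 = H_1^\ast + \gamma/3 < H_1^\ast + |H_1^\ast| = 0$. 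Direct computation gives $m_1 + m_2\lambda^2 = (H_1^\ast+H_2^\ast\lambda^2) + \tfrac{2}{3}\gamma = -\tfrac{1}{3}\gamma < 0$, completing the chain of inequalities in \eqref{eq:m1m2def}. The main (and essentially only) obstacle is recognizing the factorization in the first step; once that is in hand, both the existence of $\unwt{\lambda}$ and the explicit construction of $m_1, m_2$ reduce to elementary continuity and slack arguments.
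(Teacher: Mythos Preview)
Your argument is correct and follows essentially the same route as the paper's proof. Your factorization $H_1^\ast + H_2^\ast\lambda^2 - \lambda = (\lambda-\lambda_1^\ast)(\lambda-\lambda_2^\ast)/(\lambda_1^\ast+\lambda_2^\ast)$ is exactly the statement that $P(\lambda)/(-\Psi_2^\ast)$ has $\lambda_1^\ast,\lambda_2^\ast$ as roots, which is how the paper obtains the leftmost inequality; your definition of $\unwt{\lambda}$ coincides with the paper's (note $-H_1^\ast/H_2^\ast = |\lambda_1^\ast\lambda_2^\ast|$). The only difference is that the paper dismisses the existence of $m_1,m_2$ with a one-line appeal to continuity, whereas you give an explicit construction with the $\gamma/3$ slack, which is a minor improvement in completeness but not a different idea.
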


Let $\unwt{\lambda}$, $m_1$, and $m_2$ be defined as in Lemma \ref{lem:m1m2def} and let $\lambda \in (\unwt{\lambda},\lambda_1^*)$. For any $\varepsilon > 0$, the continuity of $H$ implies that $Th$ is continuous for all $h \in \upwt{D}_{|\lambda|}(I_\varepsilon)$. By definition of a stationary point $\Psi(k^*,k^*,k^*)=0$, then $H(k^*,k^*)=k^*$. It follows that $Th(k^*)=k^*$ for any $h \in \upwt{D}_{|\lambda|}(I_\varepsilon)$. Now, since $H_1$ and $H_2$ are continuous on $I$, there exists some $\varepsilon > 0$ such that $I_\varepsilon \subset I$ and $H_1(x,z) \leq m_1 < 0$ and $
0 < H_2(x,z) \leq m_2$, for all $(x,z) \in I_\varepsilon \times I_\varepsilon$. Let $k,k' \in I_{\varepsilon'}$ and, without loss of generality, suppose that $k \geq k'$. The fact that $h$ is decreasing implies that $h(k) \leq h(k')$ and $h^2(k) \geq h^2(k')$. Then,
\begin{align*}
Th(k)-Th(k') & = \left[H(k,h^2(k))-H(k',h^2(k))\right]
+\left[H(k',h^2(k))-H(k',h^2(k'))\right],\\[5pt]
& \leq m_1(k-k')+m_2\left[h^2(k)-h^2(k')\right],\\[5pt]
& \leq m_1(k-k')+m_2\lambda\left[h(k)-h(k')\right],\\[5pt]
& \leq (m_1+m_2\lambda^2)(k-k'),
\end{align*}
where the third and fourth lines use the fact that $h \in \upwt{D}_{|\lambda|}(I_\varepsilon)$. By \eqref{eq:m1m2def}, we have that $(m_1+m_2\lambda^2) < 0$, so $Th(k) \leq Th(k')$. Hence $Th$ is decreasing.

It remains to prove that $Th$ satisfies the Lipschitz condition with constant $|\lambda|$. By the continuity of $Th$, it follows from Lemma \ref{lem:m1m2def} and the result from  the previous paragraph, that for $k,k' \in I_\varepsilon$ and $\lambda \in (\unwt{\lambda},\lambda_1^*)$, we have
\begin{align*}
\lambda \leq \frac{Th(k)-Th(k')}{k-k'} \leq m_1+m_2\lambda^2 < 0.
\end{align*}
This immediately implies that $|Th(k)-Th(k')| \leq |\lambda||k-k'|$. Hence, the operator $T$ maps the space $\upwt{D}_{|\lambda|}(I_\varepsilon)$ to itself. Note that $I_\varepsilon$ is compact and it is easy to verify that $\upwt{D}_{|\lambda|}(I_\varepsilon) \subset C(I_\varepsilon)$ is closed, bounded and convex, and that $\upwt{D}_{|\lambda|}(I_\varepsilon)$ is itself an equicontinuous family. Moreover, $T$ is continuous, but we leave the proof and several technical details to Appendix \ref{sec:appM}. All this means that $T$ satisfies the hypothesis of Schauder's fixed point theorem, so we have proved the main result of this section, which we state as a theorem.

\begin{thrm}
\label{thm:fixedpT}
There exist $\varepsilon > 0$ and $|\lambda|< 1$ such that the operator $T:\upwt{D}_{|\lambda|}(I_\varepsilon) \to \upwt{D}_{|\lambda|}(I_\varepsilon)$ defined in \eqref{eq:Tdef} has a fixed point.
\end{thrm}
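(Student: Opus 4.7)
My plan is to apply Schauder's fixed point theorem to the operator $T$ on the set $\upwt{D}_{|\lambda|}(I_\varepsilon)$, with the choices of $\varepsilon > 0$ and $\lambda \in (\unwt{\lambda},\lambda_1^*)$ supplied by Lemma~\ref{lem:m1m2def}. The statement of Schauder's theorem I want to use says that a continuous self-map of a nonempty, closed, bounded, convex subset of a Banach space, whose image is relatively compact, has a fixed point. So the work splits into (i) verifying that $\upwt{D}_{|\lambda|}(I_\varepsilon)$ has the right geometric/topological properties in $(C(I_\varepsilon),\|\cdot\|_\infty)$, (ii) verifying that $T$ maps this set to itself, (iii) showing that $T$ is continuous in the sup norm, and (iv) showing that $T(\upwt{D}_{|\lambda|}(I_\varepsilon))$ is relatively compact via Arzelà--Ascoli.

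For (i), I would observe that $\upwt{D}_{|\lambda|}(I_\varepsilon)$ is nonempty (the linear function $k \mapsto k^* + \lambda(k - k^*)$ lies in it), convex (a convex combination of decreasing $|\lambda|$-Lipschitz functions fixing $k^*$ retains all three properties), bounded (each $h$ satisfies $|h(k)-k^*| \leq |\lambda|\varepsilon$ pointwise, so $\|h\|_\infty$ is uniformly bounded), and closed (all three defining conditions pass to uniform limits). For (ii), the self-mapping property is precisely the content of the block of calculations culminating in $|Th(k)-Th(k')| \leq |\lambda||k-k'|$ that precedes the theorem: those lines show $Th(k^*)=k^*$, $Th$ is decreasing, and $Th$ is $|\lambda|$-Lipschitz, using Lemma~\ref{lem:m1m2def} and the continuity of $H_1, H_2$ to shrink $\varepsilon$ so that the uniform bounds $H_1 \leq m_1$ and $H_2 \leq m_2$ hold on $I_\varepsilon \times I_\varepsilon$.

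For (iv), relative compactness is essentially automatic: all elements of $T(\upwt{D}_{|\lambda|}(I_\varepsilon))$ are $|\lambda|$-Lipschitz and uniformly bounded on the compact interval $I_\varepsilon$, so Arzelà--Ascoli gives a convergent subsequence for any sequence in the image. In fact the whole set $\upwt{D}_{|\lambda|}(I_\varepsilon)$ is itself a uniformly equicontinuous, uniformly bounded family, hence relatively compact.

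The step I expect to be the main obstacle is (iii), the continuity of $T$ in the sup norm. Given $h_n \to h$ in $\upwt{D}_{|\lambda|}(I_\varepsilon)$, I need $\sup_{k \in I_\varepsilon}|H(k,h_n^2(k)) - H(k,h^2(k))| \to 0$. The inner composition is delicate because $h_n^2 = h_n \circ h_n$ involves the iterate. I would handle it by writing
\begin{equation*}
|h_n^2(k) - h^2(k)| \leq |h_n(h_n(k)) - h(h_n(k))| + |h(h_n(k)) - h(h(k))| \leq \|h_n - h\|_\infty + |\lambda|\,\|h_n - h\|_\infty,
\end{equation*}
so that $h_n^2 \to h^2$ uniformly on $I_\varepsilon$. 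Then, since $H$ is continuous on the compact rectangle $I_\varepsilon \times I_\varepsilon$, it is uniformly continuous there, and uniform convergence of the second argument gives $\|Th_n - Th\|_\infty \to 0$. Once (i)--(iv) are established, Schauder's theorem yields a fixed point $h^* \in \upwt{D}_{|\lambda|}(I_\varepsilon)$ with $h^*(k) = H(k,(h^*)^2(k))$, equivalently $\Psi(k,h^*(k),(h^*)^2(k)) = 0$ on $I_\varepsilon$, which is the desired stationary Markovian strategy. The symmetric Group~I case with $-1 < \lambda_1^* < 0$ and $\lambda_2^* > 0$ giving a mixed-monotone $H$ and decreasing $h$ is the one I treat; the uniformly monotone cases (Group~III) and the other mixed-monotone orientation follow by the analogous argument on a space of monotone functions of the appropriate sign, exactly as indicated in the paragraph preceding the theorem.
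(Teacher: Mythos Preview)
Your proposal is correct and follows essentially the same Schauder-based route as the paper: verify that $\upwt{D}_{|\lambda|}(I_\varepsilon)$ is nonempty, closed, bounded, convex and equicontinuous in $C(I_\varepsilon)$, use Lemma~\ref{lem:m1m2def} and the bounds on $H_1,H_2$ to show $T$ is a self-map, establish continuity of $T$, and conclude via Arzel\`a--Ascoli and Schauder. Your triangle-inequality estimate $|h_n^2(k)-h^2(k)|\le (1+|\lambda|)\|h_n-h\|_\infty$ is in fact tighter than the paper's two-lemma treatment of continuity in Appendix~\ref{sec:appM}; the only slip is a label---the eigenvalue configuration $-1<\lambda_1^*<0<\lambda_2^*$ you treat is Group~II, not Group~I, in Table~\ref{tab:Hmonot}.
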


\subsection{Discussion}
\label{sec:discussM}

We close this section with a stability analysis and a graphical analysis for the equilibrium in Markov strategies, along the lines of those made in Section \ref{sec:discussOL} for the precommitment equilibrium. First, differentiate the left-hand sides of \eqref{eq:kstar1} and \eqref{eq:kstar2} and evaluate at $k=k^*$, which yields
\begin{subequations}
\label{eq:nucondM}
\begin{align}
\label{eq:nucondMi}
\frac{\alpha_i'}{\alpha_i}(G_1^i+G_2^i)+\frac{f''-G_{11}^j-G_{12}^j}{f'-G_1^j} &< 0\\[5pt]
\label{eq:nucondMj}
\frac{\alpha_j'}{\alpha_j}(G_1^j+G_2^j)+\frac{f''-G_{11}^i-G_{12}^i}{f'-G_1^i} &< 0,
\end{align}
\end{subequations}
and then verify that for the aggregate savings function
$s(k^*):=f(k^*)-G^i(k^*,k^*)-G^j(k^*,k^*)$,
the following local stability condition $|s'(k^*)| < 1$ holds, i.e.,
\begin{equation*}
s'(k^*):=f'(k^*)-(G_1^i(k^*,k^*)+G_1^j(k^*,k^*))-(G_2^i(k^*,k^*)+G_2^j(k^*,k^*)).
\end{equation*}
To do this, use \eqref{eq:nucondM} to eliminate $(G_1^i+G_1^j)$ and $(G_2^i+G_2^j)$ from $s'(k^*)$ and rearrange terms, so the stability condition is equivalent to
\begin{align}
\label{eq:locstabM}
\left(\frac{\alpha_i'}{\alpha_i}\right)\left(\frac{\alpha_j'}{\alpha_j}\right)\left(f'-1\right)+\left(\frac{\alpha_i'}{\alpha_i}\right)\frac{f''-G_{11}^i-G_{12}^i}{f'-G_1^i}+\left(\frac{\alpha_j'}{\alpha_j}\right)\frac{f''-G_{11}^j-G_{12}^j}{f'-G_1^j} < 0,
\end{align}
if $s(k^*)$ is increasing, and with the reverse inequality if $s(k^*)$ is decreasing. Two remarks are in order: first, keeping in mind that the arguments of these functions are  best-response profiles for two different equilibria,  comparing \eqref{eq:locstabM} with the stability condition for the precommitment equilibrium \eqref{eq:locstabOL} is useful to get some understanding about the additional strategic interactions brought by Markov strategies; second, this stability condition has a close relation with the ``stability parameters'' $\nu_{ij}$ and $\nu_{ji}$ defined in \eqref{eq:nuijdef}.

For the graphical approach, by \eqref{eq:G1}, we have
$(f'-G_1^j)=\xi_{ij}\,f'$ and $(f'-G_1^i)=\xi_{ji}\,f'$,
where
\begin{equation*}
\xi_{ij}:=\frac{1+\delta_i/\omega_i}{1+\delta_i/\omega_i+\delta_j/\omega_j} \quad \text{and} \quad
\xi_{ji}:=\frac{1+\delta_j/\omega_j}{1+\delta_i/\omega_i+\delta_j/\omega_j}.
\end{equation*}
Thus, a stationary Markov equilibrium can be written as
\begin{equation}
\label{eq:SPMdisc}
\alpha_i\left(G^i(k^*,k^*)\right)\xi_{ji}(k^*,k^*)\,f'(k^*)=\alpha_j\left(G^j(k^*,k^*)\right)\xi_{ij}(k^*,k^*)\,f'(k^*)=1.
\end{equation}
Provided that these functions are sufficiently smooth so that $\alpha_i$ and $\alpha_j$ are invertible over the relevant range, it is possible to determine the optimal stationary strategies for $k \in I_\varepsilon$,
\begin{equation*}
G^i(k,k)=\alpha_i^{-1}\left(\frac{1}{\xi_{ji}(k,k)\,f'(k)}\right) \quad \text{and} \quad G^j(k,k)=\alpha_j^{-1}\left(\frac{1}{\xi_{ij}(k,k)\,f'(k)}\right).
\end{equation*}
Hence the Markov equilibrium results from the intersection of the aggregate consumption curve, $G(k,k):= G^i(k,k) + G^j(k,k)$, and the aggregate net output curve, $f(k)-k$. This is pictured in Figure~\ref{fig:stateqME} as point $E'$, which determines $k^*$. Points $C'$ and $D'$ represent the equilibrium consumption levels for players $i$ and $j$, respectively, associated with $k^*$.

\begin{figure}[h!]
\centering
\includegraphics[width=0.75\textwidth]{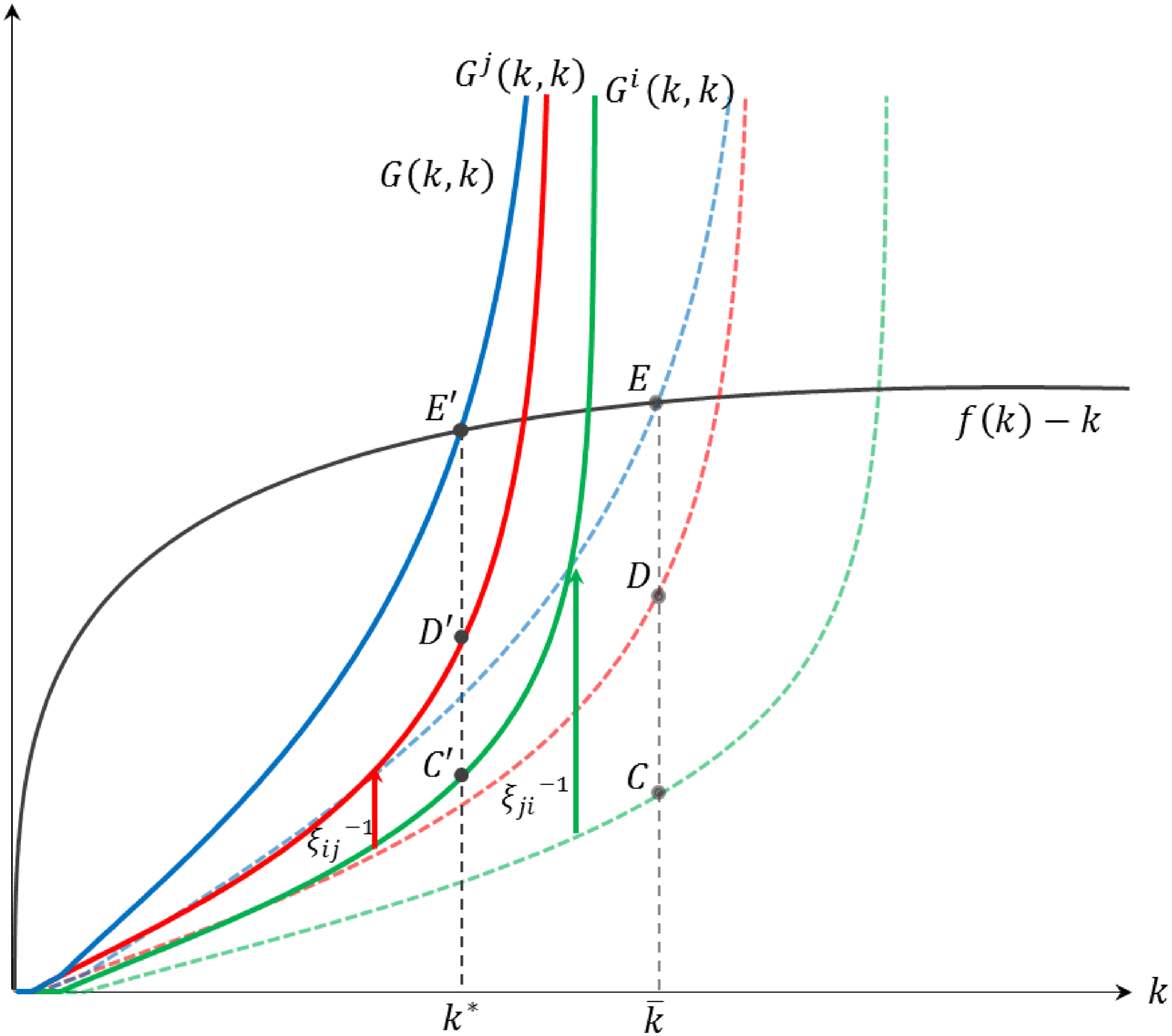}
\caption{Stationary equilibrium in Markov strategies}
\label{fig:stateqME}
\end{figure}

Given that both $\xi_{ij}$ and $\xi_{ji}$ are strictly positive and less than one, and ignoring second-order effects, it follows from \eqref{eq:SPMdisc} that $f'(k^*) > f'(\ov{k})$, hence $k^* < \ov{k}$. In other words, the precommitment equilibrium can be obtained from this condition by setting $\xi_{ij}=\xi_{ji}=1$ for all $k \in I$. As $\alpha_i \geq \alpha_j$ near $k^*$, it also follows from \eqref{eq:SPMdisc}  that $\xi_{ij}(k^*,k^*) < \xi_{ji}(k^*,k^*)$. This determines the shifts of the $G^i$ and $G^j$ curves in the graph (shown in upward arrows). Intuitively, when the possibility of commitment is no longer feasible, players internalize the effect that their actions have on their opponents. We can think that $\xi_{ij}$ and  $\xi_{ji}$ measure the burden that each player imposes on the other player by increasing the marginal cost of investment. Since $\xi_{ij} < \xi_{ji}$, the burden is heavier for the patient player $i$, than the other way around. As a result, the impatient player $j$ reduces consumption (from $D$ to $D'$) while player $i$ consumes more (from $C$ to $C'$).

\section{Concluding Remarks}
\label{sec:conclusions}

This paper studies a two-agent capital accumulation model with heterogeneity in preferences and income shares. Preferences are represented by recursive utility functions with decreasing marginal impatience. Given that agents share a single production technology, individual consumption and saving decisions are interrelated. This mutual influence among agents also creates incentives for strategic behavior. The stationary equilibria of this dynamic game are analyzed under two information structures: one in which agents precommit to future actions, and another one where agents use Markovian strategies. 

We develop a novel approach for the solution of stationary equilibria in this class of capital accumulation games with recursive preferences. The method results from a combination of different approaches, which include the dynamical systems approach, operator methods, and fixed point theory. In the case of Markovian strategies, Euler equations arising from each agent's dynamic program are transformed into a system of partial differential equations, which are solved by implicit programming. 

Despite the technical difficulties involved, we are able to prove the existence of stationary equilibria in open-loop and Markovian strategies, and characterize these equilibria based on general functional forms. Aggregate capital in any stationary equilibria is bounded above by the equilibrium level of the less patient agent in the autarky setting. Moreover, agents reduce capital accumulation in a Markovian equilibrium compared to the precommitment case. Under certain regularity conditions,  it is shown that convergence is monotone under precommitment, but Markovian equilibria may exhibit nonmonotonic paths for aggregate variables, even in the long-run. 

\appendix

\section{The Strategy Space: Proofs}
\label{sec:appSP}

\subsection{Proof of Lemma \ref{lem:stateqAU}}

For simplicity, assume that $\alpha_i(c) \geq \alpha_j(c)$ holds for all $c \in [0,k_m]$, since the argument does not hinge on this particular assumption. By \ref{asm:prodfp}, we have that $\alpha_i(0)f'(0^+) > 1$ (it could be infinity). From  \ref{asm:prodf}--\ref{asm:prodfp}, the maximum sustainable level satisfies $f'(k_m) < 1$. This, together with \ref{asm:alpha} and the fact that $f(k_m)-k_m > 0$, implies $\alpha_i(k_m)f'(k_m) < 1$. Hence the existence of $k_a^i \in (0,k_m)$ follows from the continuity of $\alpha_i$ and $f'$. The proof for $k_a^j \in (0,k_m)$ is analogous. 

By \eqref{eq:SPaut}, we have in a stationary equilibrium that
\begin{equation}
\label{eq:autequal}
1=\alpha_i(c_a^i)f'(k_a^i)=\alpha_j(c_a^j)f'(k_a^j),
\end{equation}
where $0 < c_a^i = f(k_a^i)-k_a^i$ and $0 < c_a^j = f(k_a^j)-k_a^j$. Suppose that $k_a^i < k_a^j$. Hence $f'(k_a^i) > f'(k_a^j) >1$, and it follows that $\alpha_i(c_a^i)<\alpha_j(c_a^j)$ from  \eqref{eq:autequal}. For this inequality to hold, $c_a^i$ must be sufficiently lower than $c_a^j$. In particular, $0 < f(k_a^i)-k_a^i < f(k_a^j)-k_a^j$, or, equivalently,
\begin{equation*}
\frac{f(k_a^i)-f(k_a^j)}{k_a^i-k_a^j} < 1.
\end{equation*}
Then, by the mean value theorem, there is a $k_a^i < x < k_a^j$ such that $f'(x) < 1$. But this contradicts the concavity of $f$. Hence, $k_a^j \leq k_a^i$. The other inequality, $c_a^j \leq c_a^i$, follows from the concavity of $f$ and the fact that both $k_j^a$ and $k_i^a$ satisfy $f'(k)> 1$. \qed

\section{Precommitment Strategies: Details and Proofs}
\label{sec:appOL}

\subsection{Proof of Proposition \ref{prp:localreg}}

Since \eqref{eq:stptOLi} and \eqref{eq:stptOLj} hold for any interior stationary point, we have that
\begin{subequations}
\label{eq:autvsOL}
\begin{align}
\label{eq:autvsOL1}
1 & = \alpha_i(\ov{c}^i)f'(\ov{k})= \alpha_i(c_a^i)f'(k_a^i),
\\[5pt]
\label{eq:autvsOL2}
1 & = \alpha_j(\ov{c}^j)f'(\ov{k})= \alpha_j(c_a^j)f'(k_a^j),
\end{align}
\end{subequations}
which implies $\alpha_i(\ov{c}^i) < 1$ and $\alpha_j(\ov{c}^j) < 1$ by \ref{asm:alpha}. This in turn implies $f'(\ov{k}) > 1$, $f'(k_a^i) > 1$, and $f'(k_a^j) > 1$. Next, divide \eqref{eq:autvsOL1} by \eqref{eq:autvsOL2} to obtain
\begin{equation*}
1=\frac{\alpha_i(\ov{c}^i)}{\alpha_j(\ov{c}^j)}=\frac{\alpha_i(c_a^i)f'(k_a^i)}{\alpha_j(c_a^j)f'(k_a^j)}.
\end{equation*}
Given that $\alpha_i(\cdot) \geq \alpha_j(\cdot)$, for the first equality to hold it must be the case that $\ov{c}^i \leq  \ov{c}^j$, which proves (i).

For part (ii), note that from \eqref{eq:autvsOL1}--\eqref{eq:autvsOL2} and the resource constraint, it follows that
\begin{align*}
1 = \alpha_i(f(\ov{k})-\ov{k}-\ov{c}^j)f'(\ov{k})
 =\alpha_i\left(f(\ov{k})-\ov{k}-\alpha_j^{-1}\left(1/f'(\ov{k})\right)\right)f'(\ov{k}).
\end{align*}
From the second inequality above, the nonnegativity condition $\ov{c}^i \geq 0$ is equivalent to
\begin{align*}
\alpha_j(f(\ov{k})-\ov{k})f'(\ov{k}) \geq 1 = \alpha_j(f(k_a^j)-k_a^j)f'(k_a^j).
\end{align*}
Hence, $\ov{k} \leq k_a^j$. The remaining inequality $\ov{k} \leq k_a^i$ follows from Lemma \ref{lem:stateqAU}.

To show part (iii), rearrange \eqref{eq:autvsOL1} and apply the result from part (ii) of this proposition to obtain
\begin{equation*}
\frac{\alpha_i(\ov{c}^i)}{\alpha_i(c_a^i)}=\frac{f'(k_a^i)}{f'(\ov{k})} \leq 1,
\end{equation*}
hence the monotonicity of the discount factor implies $\ov{c}^i \leq c_a^i$. The remaining inequality can be obtained applying a similar argument to \eqref{eq:autvsOL2}. This completes the proof.\qed

\subsection{Proof of Proposition \ref{prp:locstabOL}}

By local regularity, it follows that
\begin{equation*}
\frac{\alpha_i'}{\alpha_i}(f'-1)+\frac{f''}{f'} < 0 \quad \text{and} \quad \frac{\alpha_j'}{\alpha_j}(f'-1)+\frac{f''}{f'} < 0
\end{equation*}
on $I^i \times I^j \times I^k$. 

Multiplying both sides of the first inequality above by $\alpha_i'/\alpha_i$, multiplying both sides of the second inequality by $\alpha_j'/\alpha_j$, and adding up the result, we have
\begin{equation*}
\left[\left(\frac{\alpha_i'}{\alpha_i}\right)^2+\left(\frac{\alpha_j'}{\alpha_j}\right)^2\right](f'-1)+\left(\frac{\alpha_i'}{\alpha_i}+\frac{\alpha_j'}{\alpha_j}\right)\frac{f''}{f'} < 0.
\end{equation*}
Given that $\alpha_i'/\alpha_i,\,\alpha_j'/\alpha_j \geq 0$, Young's inequality\footnote{Young's inequality states that if $a$ and $b$ are nonnegative real numbers, and $p$ and $q$ positive real numbers such that $1/p+1/q=1$, then $ab \leq a^p/p+b^q/q$.} implies that
\begin{equation*}
\frac{\alpha_i'}{\alpha_i}\frac{\alpha_j'}{\alpha_j}\leq \frac{1}{2}\left(\frac{\alpha_i'}{\alpha_i}\right)^2+\frac{1}{2}\left(\frac{\alpha_j'}{\alpha_j}\right)^2 \leq \left(\frac{\alpha_i'}{\alpha_i}\right)^2+\left(\frac{\alpha_j'}{\alpha_j}\right)^2.
\end{equation*}
Hence, it follows that
\begin{equation*}
\frac{\alpha_i'}{\alpha_i}\frac{\alpha_j'}{\alpha_j}(f'-1)+\left(\frac{\alpha_i'}{\alpha_i}+\frac{\alpha_j'}{\alpha_j}\right)\frac{f''}{f'} < 0,
\end{equation*}
which is \eqref{eq:locstabOL}, the desired result.
\qed

\subsection{Proof of Theorem \ref{thm:manifoldOL}}

The proof of the theorem will be carried out in several steps which we formulate as independent lemmas. We assume that the matrix $A$ has no eigenvalues lying in the unit circle (hyperbolic fixed point) so the stable manifold theorem applies.

Generally speaking, the graph of a third-degree polynomial,
\begin{equation*}
p(\lambda)=\lambda^3 - \tr(A)\lambda^2 + \tfrac{1}{2}\left(\tr^2(A)-\tr(A^2)\right)\lambda - \det(A),
\end{equation*}
can be characterized by its roots, the intersection with the $y$-axis, a local maximum, a local minimum, and a point of inflection.

\begin{lemm}
\label{lem:Ps}
Assume that $\eta_i,\eta_j < 0$. Then $p(-1) < p(0) < 0 < p(1)$.
\end{lemm}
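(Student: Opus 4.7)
The plan is to reduce each of $p(0)$, $p(1)=\det(I-A)$, and $p(-1)=-\det(I+A)$ to an explicit polynomial in $\delta_l,\omega_l,\eta_l,\Delta_0,f'$ and then read off signs. The standing facts I will invoke are $\delta_l>0$, $\omega_l>\delta_l$ (because $\omega_l-\delta_l=-W^l/U^l>0$), $\Delta_0>0$, and $f'(\ov{k})>1$ (which follows from $\alpha_l(\ov{c}^l)f'(\ov{k})=1$ with $\alpha_l<1$), combined with the hypothesis $\eta_i,\eta_j<0$.

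The case $p(0)=-\det(A)=-\omega_i\omega_jf'/\Delta_0<0$ is immediate from \eqref{eq:detA}. For the other two values, the key algebraic simplification comes from the identities
\begin{equation*}
F_1^i+\tfrac{1}{f'}F_3^i=F_1^j+\tfrac{1}{f'}F_3^j=\frac{\omega_i\omega_j}{\Delta_0},\quad F_2^i+\tfrac{1}{f'}F_3^i=-\frac{\delta_i\omega_j}{\Delta_0},\quad F_2^j+\tfrac{1}{f'}F_3^j=-\frac{\omega_i\delta_j}{\Delta_0},
\end{equation*}
which follow from \eqref{eq:Fls} by direct cancellation of the cross terms $A:=\eta_i\omega_j-\delta_i\eta_j$ and $B:=\omega_i\eta_j-\eta_i\delta_j$. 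Applying the column operations $\text{col}_1\mapsto\text{col}_1\pm\tfrac{1}{f'}\text{col}_3$ and $\text{col}_2\mapsto\text{col}_2\pm\tfrac{1}{f'}\text{col}_3$ to $I\mp A$ (with signs matched) concentrates the $\eta$-dependence entirely in the third column, and the resulting $3\times 3$ determinant expands cleanly along row~3 once the common factors of $1/\Delta_0$ are extracted from the first two columns and, after substituting $F_3^l=(\text{cross term})\,f'/\Delta_0$, the factor $f'/\Delta_0$ is extracted from the third.

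For $p(1)$ this procedure collapses, using also the bilinear identities $A\omega_i+B\delta_i=\Delta_0\eta_i$, $A\delta_j+B\omega_j=\Delta_0\eta_j$, and $\delta_i(\omega_j+\delta_j)B+\delta_j(\omega_i+\delta_i)A=\Delta_0(\delta_i\eta_j+\delta_j\eta_i)$, to
\begin{equation*}
\Delta_0^2\det(I-A)=\delta_i\delta_j\Delta_0(f'-1)-(\delta_i\eta_j+\delta_j\eta_i),
\end{equation*}
both summands positive, so $p(1)>0$. The analogous computation for $I+A$, after invoking in addition $P^2-\delta_i\delta_j\omega_i\omega_j=\Delta_0(4\Delta_0+3\delta_i\delta_j)$ with $P:=\Delta_0+\omega_i\omega_j$, yields
\begin{equation*}
\Delta_0^2\det(I+A)=\Delta_0(1+f')(4\Delta_0+3\delta_i\delta_j)-2(A+B)-(\delta_i\eta_j+\delta_j\eta_i).
\end{equation*}
All three terms are positive: the first by inspection; the second because $A+B=\eta_i(\omega_j-\delta_j)+\eta_j(\omega_i-\delta_i)<0$ thanks to $\omega_l>\delta_l$ and $\eta_l<0$; and the third as before. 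This gives $\det(I+A)>0$ and hence $p(-1)=-\det(I+A)<0$.

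The main obstacle is purely algebraic: the naive expansions of $\det(I\mp A)$ generate many cross products in $A$ and $B$, and the reductions to the compact closed forms above hinge on recognizing the auxiliary bilinear identities listed above. Once these are isolated, the sign analysis is direct, and the three assertions of the lemma fall out with no further input from the hypotheses beyond $\eta_i,\eta_j<0$.
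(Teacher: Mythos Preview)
Your determinant-based route is sound and close in spirit to the paper's: both reduce $p(-1),p(0),p(1)$ to explicit rational functions in $\delta_l,\omega_l,\eta_l,f'$ and read off signs. The paper works from the Cayley--Hamilton expression for $p(\lambda)$ and plugs in $\lambda=-1,0,1$; you instead compute $\det(\lambda I-A)$ directly via column operations. Your auxiliary identities $A\omega_i+B\delta_i=\Delta_0\eta_i$, $A\delta_j+B\omega_j=\Delta_0\eta_j$, etc.\ are correct, and the sign arguments using $\omega_l>\delta_l$, $f'>1$, $\eta_l<0$ are exactly the right ones.

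There is, however, a genuine gap: the lemma asserts the full chain $p(-1)<p(0)<0<p(1)$, and you never verify the first inequality $p(-1)<p(0)$. You establish $p(-1)<0$ and $p(0)<0$ separately, but that does not order them. The paper handles this by a direct comparison (``simple calculation shows that $p(-1)<p(0)$''), which amounts to checking that
\[
p(0)-p(-1)=\frac{(4\omega_i\omega_j-\delta_i\delta_j)(f'+1)-\omega_i\omega_jf'}{\Delta_0}-\frac{\eta_i(2\omega_j-\delta_j)+(2\omega_i-\delta_i)\eta_j}{\Delta_0}>0,
\]
which is immediate once you note $4\omega_i\omega_j-\delta_i\delta_j>\omega_i\omega_j$ and the $\eta$-term is negative. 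You should add this step.

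A secondary point: your displayed closed forms carry a normalization slip. Comparing with the paper's $p(1)=\bigl[\delta_i\delta_j(f'-1)-(\delta_i\eta_j+\eta_i\delta_j)\bigr]/\Delta_0$, your expression $\Delta_0^2\det(I-A)=\delta_i\delta_j\Delta_0(f'-1)-(\delta_i\eta_j+\delta_j\eta_i)$ is off by a factor of $\Delta_0$ on the second term (and similarly for $\det(I+A)$). This does not affect your sign conclusions, since both summands are positive regardless, but it is worth correcting for the record.
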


\begin{proof}
Reordering terms in \eqref{eq:trA} yields
\begin{equation*}
\tr(A)= \frac{2\omega_i\omega_j -\eta_i(\omega_j-\delta_j)-\eta_j(\omega_i-\delta_i)}{\Delta_0}+f'.
\end{equation*}
By \ref{asm:etaneg} and the fact that $\omega_i-\delta_i >0$ and $\omega_j-\delta_j > 0$, it follows that $\tr(A) > 0$. It has also been established that $\det(A) > 0$. The linear coefficient of $p$ is given by
\begin{equation*}
\frac{\tr^2(A)-\tr(A^2)}{2}=\frac{\omega_i\omega_j-\eta_i\omega_j-\omega_i\eta_j}{\Delta_0} + \frac{2\omega_i\omega_j}{\Delta_0}f',
\end{equation*}
which is clearly positive with $\eta_i,\eta_j < 0$. These values completely describe the coefficients of the polynomial $p$. Hence, we will evaluate $p(\cdot)$ at certain reference points to help determine the stable and unstable local manifolds. Here we choose 0, $1$, and $-1$, which gives
\begin{align*}
p(-1)&= -\left(\frac{4\omega_i\omega_j-\delta_i\delta_j}{\Delta_0}\right)(f'+1)+\frac{\eta_i(2\omega_j-\delta_j)+(2\omega_i-\delta_i)\eta_j}{\Delta_0} < 0,\\[5pt]
p(0)&= -\frac{\omega_i\omega_j}{\Delta_0} < 0,\\[5pt]
p(1)&= \frac{\delta_i\delta_j\,(f'-1)-(\delta_i\eta_j+\eta_i\delta_j)}{\Delta_0} > 0.
\end{align*}
Simple calculation shows that $p(-1)< p(0)$. Hence $p(-1) < p(0) < 0 < p(1)$ as claimed.
\end{proof}

The next result is useful to characterize the critical points of $p$.

\begin{lemm}
\label{lem:trAb3}
If $\eta_i < 0$ and $\eta_j < 0$, then $\frac{1}{3}\tr(A) > 1$.
\end{lemm}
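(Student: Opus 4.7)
My plan is to show directly that $\tr(A) > 3$ by splitting it into the fractional term and $f'$, and bounding each piece separately.

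Start from the reordered expression
\[
\tr(A) = \frac{2\omega_i\omega_j - \eta_i(\omega_j-\delta_j) - \eta_j(\omega_i-\delta_i)}{\Delta_0} + f',
\]
with $\Delta_0 = \omega_i\omega_j - \delta_i\delta_j$. The first step is to observe that the constants from \eqref{eq:notatioOL} satisfy $\omega_i > \delta_i > 0$ and $\omega_j > \delta_j > 0$: indeed $\omega_i - \delta_i = -W^i/U^i$, and the sign conventions $W^i < 0 < U^i$ at a stationary point (noted right after \eqref{eq:notatioOL}) make this strictly positive. Combined with $\eta_i, \eta_j < 0$, this gives $-\eta_i(\omega_j-\delta_j) > 0$ and $-\eta_j(\omega_i - \delta_i) > 0$, so the numerator of the fraction is bounded below by $2\omega_i\omega_j$.

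The second step is to compare this to the denominator. Since $\delta_i\delta_j > 0$, we have $\Delta_0 = \omega_i\omega_j - \delta_i\delta_j < \omega_i\omega_j$, and therefore $2\omega_i\omega_j > 2\Delta_0 > 0$. Dividing, the fractional term in $\tr(A)$ exceeds $2$.

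The third step is to bound $f'$. At the stationary equilibrium, \eqref{eq:stptOLi} gives $\alpha_i(\ov{c}^i)f'(\ov{k}) = 1$, and assumption \ref{asm:alphap} forces $\alpha_i(\ov{c}^i) < \ov{\alpha}_i < 1$, so $f'(\ov{k}) > 1$. Combining, $\tr(A) > 2 + 1 = 3$, hence $\tfrac{1}{3}\tr(A) > 1$, as claimed. The argument is a direct sign analysis; the only subtlety is recalling that the stationarity condition already forces dynamic efficiency $f' > 1$, so no additional hypothesis is required.
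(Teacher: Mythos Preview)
Your proof is correct and follows essentially the same approach as the paper. The paper also splits $\tr(A)$ into $f' > 1$ plus the diagonal sum $F_1^i + F_1^j$, and then shows $F_1^i + F_1^j - 2 = \frac{2\delta_i\delta_j - \eta_i(\omega_j-\delta_j) - (\omega_i-\delta_i)\eta_j}{\Delta_0} > 0$ by the same sign observations you use; your two-step bound (numerator $> 2\omega_i\omega_j > 2\Delta_0$) is just a slight repackaging of that computation.
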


\begin{proof}
We have already established that $f'(\bar{k}) > 1$ for any given stationary point. Since the remaining entries of the main diagonal of $A$, are positive, it suffices to show that their sum is greater than two, or equivalently, that $F_1^i + F_1^j -2 > 0$. From \eqref{eq:Fls}, we have that
\begin{align*}
F_1^i +F_1^j - 2 &=\frac{2\delta_i\delta_j-\eta_i(\omega_j-\delta_j)-(\omega_i-\delta_i)\eta_j}{\Delta_0} > 0,
\end{align*}
and the desired result follows.
\end{proof}

\begin{lemm}
\label{lem:critpoints}
Suppose that $\eta_i < 0$, $\eta_j < 0$ and $\tr(A^2)-\tfrac{1}{3}\tr^2(A)>0$. Then, $p(\lambda)$ has a local maximum at $r_1 > 0$, a local minimum at $r_2$, and a point of inflection at $r_3$ with $r_1 < r_3 < r_2$ and $r_3 > 1$.
\end{lemm}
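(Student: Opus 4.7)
The plan is to work with the derivative and second derivative of the cubic $p(\lambda)$ directly, since the critical points and the inflection point are determined by elementary formulas in the coefficients. Specifically,
\[
p'(\lambda) = 3\lambda^2 - 2\tr(A)\lambda + \tfrac{1}{2}\bigl(\tr^2(A)-\tr(A^2)\bigr), \qquad p''(\lambda) = 6\lambda - 2\tr(A),
\]
so the inflection point is immediately $r_3 = \tfrac{1}{3}\tr(A)$, and the critical points are the roots of $p'(\lambda)=0$ given by the quadratic formula.

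First I would compute the discriminant of $p'$, which simplifies to $\tfrac{2}{3}\bigl(\tr(A^2)-\tfrac{1}{3}\tr^2(A)\bigr)$ after extracting a factor of $6$ (up to constants). By hypothesis this quantity is strictly positive, so $p'$ has two distinct real roots
\[
r_1 = \tfrac{1}{3}\tr(A) - \sqrt{\tfrac{1}{6}\bigl(\tr(A^2)-\tfrac{1}{3}\tr^2(A)\bigr)\cdot\tfrac{3}{1}}, \qquad r_2 = \tfrac{1}{3}\tr(A) + (\text{same radical}),
\]
with $r_1 < r_2$. Because the leading coefficient of $p$ is positive, $r_1$ is a local maximum and $r_2$ is a local minimum. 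Since $r_3 = \tfrac{1}{3}\tr(A)$ is precisely the midpoint of $r_1$ and $r_2$, the ordering $r_1 < r_3 < r_2$ is immediate.

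Next, the inequality $r_3 > 1$ is exactly the statement of Lemma~\ref{lem:trAb3}, which has already been proved under the current assumptions $\eta_i,\eta_j<0$, so it requires no further work. The only genuinely non-obvious claim is $r_1 > 0$. For that I would show $\tr(A) > \sqrt{\tfrac{3}{2}\bigl(\tr(A^2)-\tfrac{1}{3}\tr^2(A)\bigr)}$. Squaring and simplifying, this reduces to $\tr^2(A) > \tr(A^2)$. But from the proof of Lemma~\ref{lem:Ps} the linear coefficient of $p$ equals $\tfrac{1}{2}\bigl(\tr^2(A)-\tr(A^2)\bigr)$ and was shown to be strictly positive under $\eta_i,\eta_j<0$; this is precisely $\tr^2(A) > \tr(A^2)$. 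Combined with $\tr(A)>0$ (also established in Lemma~\ref{lem:Ps}), this yields $r_1>0$.

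No step looks like a serious obstacle: everything follows from the quadratic formula together with the two sign facts — positivity of $\tr(A)$ and positivity of $\tr^2(A)-\tr(A^2)$ — that were already carried out in the preceding lemmas. The one place to be a little careful is keeping track of constants when simplifying the discriminant of $p'$ to match the hypothesis stated in the form $\tr(A^2)-\tfrac{1}{3}\tr^2(A)>0$, and making sure the final squaring argument for $r_1>0$ is valid (which it is, because both sides are positive).
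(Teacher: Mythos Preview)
Your proposal is correct and follows essentially the same route as the paper: differentiate $p$, apply the quadratic formula under the discriminant hypothesis, read off $r_3=\tfrac{1}{3}\tr(A)$ from $p''$, and invoke Lemma~\ref{lem:trAb3} for $r_3>1$. The paper is terser about $r_1>0$ (it just says ``by Lemma~\ref{lem:trAb3} it is easy to verify''), whereas your explicit reduction to $\tr^2(A)>\tr(A^2)$ via the positivity of the linear coefficient from Lemma~\ref{lem:Ps} is in fact the cleaner justification---equivalently, Vieta's formulas give $r_1 r_2=\tfrac{1}{6}\bigl(\tr^2(A)-\tr(A^2)\bigr)>0$ and $r_1+r_2=\tfrac{2}{3}\tr(A)>0$.
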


\begin{proof}
First differentiate $p$ to find the critical points, which are the zeroes of $p'(\lambda)$ in $\mathbb{R}$, i.e., the solution of the quadratic equation
\begin{equation*}
\lambda^2-\tfrac{2}{3}\tr(A)\lambda
+\tfrac{1}{6}\left(\tr^2(A)-\tr(A^2)\right)=0.
\end{equation*}
Since $\tr(A^2)-\tfrac{1}{3}\tr^2(A)>0$, the critical points $r_1$ and $r_2$, given by
\begin{equation}
\label{eq:P2roots}
r_{1,2} = \tfrac{1}{3}\tr(A) \mp \tfrac{1}{2}
\sqrt{\tfrac{2}{3}\left(\tr(A^2)-\tfrac{1}{3}\tr^2(A)\right)}
\end{equation}
are well defined.\footnote{Allowing for complex roots does not alter the results in any significant way, so that case is omitted.} By Lemma \ref{lem:trAb3}, it is easy to verify that $0 < r_1 < r_2$.

Differentiating $p'$ once more, we have that
\begin{equation*}
p''(\lambda)=2\lambda-\tfrac{2}{3}\tr(A),
\end{equation*}
thus $p''$ vanishes at $r_3 = \frac{1}{3}\tr(A)$ and
\begin{equation*}
p''(\lambda) < (>)\ 0 \iff \lambda < (>)\ r_3,
\end{equation*}
respectively, so $r_3$ is a point of inflection. By simple inspection of \eqref{eq:P2roots}, it is clear that $r_1 < r_3 < r_2$, which immediately implies that $r_1$ is a local maximum and $r_2$ a local minimum of $p$ in $\mathbb{R}$. Finally, applying  Lemma \ref{lem:trAb3} again, it follows that $r_3 > 1$. This completes the proof.
\end{proof}

\subsection{Analysis of the Stable Manifold}

Let $\Phi:X \to X$ be a map describing the nonlinear discrete dynamical system
\begin{align}
\label{eq:PhiSup} 
x_{t+1} = \Phi(x_t), & & t=0,1,\ldots
\end{align}
and let $\ov{x} \in X$ be a point such that $\ov{x} = \Phi(\ov{x})$, i.e., a stationary point. By the stable manifold theorem, if $\Phi$ is continuously differentiable in a neighborhood $\mathcal{N}$ of $\ov{x}$ and $A=D\Phi(\ov{x})$ is the Jacobian matrix of $\Phi$, then there exists a neighborhood $\mathcal{U} \subset \mathcal{N}$, and a continuously differentiable function $\phi: \mathcal{U} \to \mathbb{R}^2$, for which the matrix $D\phi(\ov{x})$ has full rank 2. Moreover,  if $\{x_t\}$ is a solution to \eqref{eq:PhiSup} with $x_0 \in \mathcal{U}$ and $\phi(x_0)=0$, then $\lim_{t \to \infty} x_t=\ov{x}$. The set of $x$ values satisfying $\phi(x)=0$ is called the \emph{stable manifold} of the nonlinear dynamical system.

By Jordan decomposition, the matrix $A$ of the linearized system \eqref{eq:PhiSup} around $\ov{x}$ can be written as $A=B^{-1} \Lambda B$, where $B$ is nonsingular and $\Lambda$ is a diagonal matrix containing the eigenvalues of $A$. Hence, a solution can be expressed recursively in terms of these matrices
\begin{equation}
\label{eq:stabmanif0}
x_t = \ov{x} + B^{-1} \Lambda^t B\,(x_0-\ov{x}).
\end{equation}
It is clear that $x_t \to \ov{x}$ if and only if $B(x_0-\ov{x})=w_0$, where $w_0^i=w_0^j=0$. Let $\hat{x}_t:=x_t-\ov{x}$ denote deviations from stationary values for all $t$. This condition is equivalent to
\begin{equation}
\label{eq:stabmanif1}
\begin{bmatrix}
b_{11} & b_{12} & b_{13}\\
b_{21} & b_{22} & b_{23}\\
b_{31} & b_{32} & b_{33}\\
\end{bmatrix}
\begin{bmatrix}
\hat{c}_0^i\\
\hat{c}_0^j\\
\hat{k}_0\\
\end{bmatrix}
=
\begin{bmatrix}
0\\
0\\
w\\
\end{bmatrix}
,
\end{equation}
where $b_{31}\hat{c}_0^i + b_{32}\hat{c}_0^j + b_{33}\hat{k}_0=w$ is a constant to be determined.   

The solution \eqref{eq:stabmanif0} implies
\begin{equation*}
\begin{bmatrix}
\hat{c}_t^i \\
\hat{c}_t^j \\
\hat{k}_t   \\
\end{bmatrix}
=\frac{1}{\det(B)}
\begin{bmatrix}
M_{11} & M_{12} & M_{13}\\
M_{21} & M_{22} & M_{23}\\
M_{31} & M_{32} & M_{33}\\
\end{bmatrix}
\begin{bmatrix}
(\ov{\lambda}_1)^t &     0     & 0 \\
    0     & (\ov{\lambda}_2)^t & 0\\
    0     &     0     & (\ov{\lambda}_3)^t\\
\end{bmatrix}
\begin{bmatrix}
b_{11} & b_{12} & b_{13}\\
b_{21} & b_{22} & b_{23}\\
b_{31} & b_{32} & b_{33}\\
\end{bmatrix}
\begin{bmatrix}
\hat{c}_0^i \\
\hat{c}_0^j \\
\hat{k}_0   \\
\end{bmatrix}
,
\end{equation*}
where $M_{ij}$ is the $3 \times 3$ matrix whose elements are the principal minors of $B$ and $\det(B) \neq 0$. Solving the system above yields
\begin{align*}
\hat{c}_t^i =\frac{M_{13}}{\det(B)}\,w(\ov{\lambda}_1)^t, \quad \hat{c}_t^i =\frac{M_{23}}{\det(B)}\,w(\ov{\lambda}_1)^t, \quad \text{and} \quad \hat{k}_t =\frac{M_{33}}{\det(B)}\,w(\ov{\lambda}_1)^t,
\end{align*}
which represents a stable trajectory for all variables, since $0 < \ov{\lambda}_1 < 1$. Now, \eqref{eq:stabmanif1} can be expressed as
\begin{equation*}
\begin{bmatrix}
b_{11} & b_{12}\\
b_{21} & b_{22}\\ 
\end{bmatrix}
\begin{bmatrix}
\hat{c}_0^i\\
\hat{c}_0^j\\
\end{bmatrix}
=-
\begin{bmatrix}
b_{13}\\
b_{23}\\
\end{bmatrix}
\hat{k}_0,
\end{equation*}
and assuming $b_{11}b_{22}-b_{12}b_{21} \neq 0$, it follows that
\begin{equation*}
\begin{bmatrix}
\hat{c}_0^i\\
\hat{c}_0^j\\
\end{bmatrix}
= -\frac{1}{b_{11}b_{22}-b_{12}b_{21}}
\begin{bmatrix*}[r]
b_{22} & -b_{12}\\
-b_{21} & b_{11}\\ 
\end{bmatrix*}
\begin{bmatrix}
b_{13}\\
b_{23}\\
\end{bmatrix}
\hat{k}_0,
\end{equation*}
which yields
\begin{align*}
\hat{c}_0^i =\frac{M_{31}}{M_{33}}\,\hat{k}_0 \quad  \text{and}
\quad \hat{c}_0^j =\frac{M_{32}}{M_{33}}\,\hat{k}_0.
\end{align*}
And solving for $w$ again in \eqref{eq:stabmanif1}, it follows that 
\begin{align*}
w=\left(b_{31}\frac{M_{31}}{M_{33}}+b_{32}\frac{M_{32}}{M_{33}}+b_{33}\right)\hat{k}_0.
\end{align*}

In order to characterize the stable manifold, from the fact that $BA=\Lambda B$, we have
\begin{equation*}
\begin{bmatrix}
b_{11} & b_{12} & b_{13}\\
b_{21} & b_{22} & b_{23}\\
b_{31} & b_{32} & b_{33}\\
\end{bmatrix}
\begin{bmatrix}
\ov{F}_1^i & \ov{F}_2^i & \ov{F}_3^i\\
\ov{F}_2^j & \ov{F}_1^j & \ov{F}_3^j\\
-1    &   -1  & \ov{f}'\\
\end{bmatrix}
=
\begin{bmatrix}
\ov{\lambda}_1 &     0     & 0 \\
    0     & \ov{\lambda}_2 & 0\\
    0     &     0     & \ov{\lambda}_3\\
\end{bmatrix}
\begin{bmatrix}
b_{11} & b_{12} & b_{13}\\
b_{21} & b_{22} & b_{23}\\
b_{31} & b_{32} & b_{33}\\
\end{bmatrix}
,
\end{equation*}
which we can solve for arbitrary nonzero values of $b_{11}$, $b_{22}$, and $b_{33}$, therefore
\begin{align*}
&b_{12}=\left(\frac{\ov{F}_1^i-\ov{F}_2^i-\ov{\lambda}_1}{\ov{F}_1^j-\ov{F}_2^j-\ov{\lambda}_1}\right)b_{11}, \quad 
b_{13}=-\frac{1}{(\ov{f}'-\ov{\lambda}_1)}\left[\ov{F}_3^j\left(\frac{\ov{F}_1^i-\ov{F}_2^i-\ov{\lambda}_1}{\ov{F}_1^j-\ov{F}_2^j-\ov{\lambda}_1}\right)+\ov{F}_3^i\right]b_{11},\\[5pt] 
&b_{21}=\left(\frac{\ov{F}_1^j-\ov{F}_2^j-\ov{\lambda}_2}{\ov{F}_1^i-\ov{F}_2^i-\ov{\lambda}_2}\right)b_{22}, \quad b_{23}=-\frac{1}{(\ov{f}'-\ov{\lambda}_2)}\left[\ov{F}_3^i\left(\frac{\ov{F}_1^j-\ov{F}_2^j-\ov{\lambda}_2}{\ov{F}_1^i-\ov{F}_2^i-\ov{\lambda}_2}\right)+\ov{F}_3^j\right]b_{22},
\\[5pt]
&b_{31}=\left[\frac{\ov{F}_1^j-\ov{F}_2^j-\ov{\lambda}_3}{(\ov{F}_1^i-\ov{\lambda}_3)(\ov{F}_1^j-\ov{\lambda}_3)-\ov{F}_2^i\ov{F}_2^j}\right]b_{33}, \quad \text{and}\\[5pt]
&b_{32}=-\frac{1}{\ov{F}_3^j}\left[\frac{\ov{F}_3^i(\ov{F}_1^j-\ov{F}_2^j-\ov{\lambda}_3)}{(\ov{F}_1^i-\ov{\lambda}_3)(\ov{F}_1^j-\ov{\lambda}_3)-\ov{F}_2^i\ov{F}_2^j}+(\ov{f}'-\ov{\lambda}_3)\right]b_{33}.
 \end{align*}

The stable manifold is the set of values $(c^i,c^j,k)\in\mathcal{U}$ such that $\phi(c^i,c^j,k)=0$ holds. By the implicit function theorem, there exist functions $\pi_i$ and $\pi_j$ such that
\begin{align*}
\phi^i\left(\pi_i(k),\pi_j(k),k\right)=0,\quad \text{and} \quad 
\phi^j\left(\pi_i(k),\pi_j(k),k\right)=0.
\end{align*}
Differentiating around $\ov{k}$, we obtain
\begin{equation}
\label{eq:stabmanif2}
\begin{bmatrix}
\ov{\phi}_1^i & &\ov{\phi}_2^i\\[5pt]
\ov{\phi}_1^j & &\ov{\phi}_2^j\\
\end{bmatrix}
\begin{bmatrix}
\pi_i'(\ov{k})\\[5pt]
\pi_j'(\ov{k})\\
\end{bmatrix}
=
-
\begin{bmatrix}
\ov{\phi}_3^i\\[5pt]
\ov{\phi}_3^j\\
\end{bmatrix}
,
\end{equation}
where $\ov{\phi}_l^i$, $l=1,2,3$, denotes the partial derivative of $\phi^i$ with respect to the $l$-th argument,  evaluated at the stationary point, and similarly for $\ov{\phi}_l^j$. In fact, the derivatives of the stable manifold at the stationary point are related to the coefficients of $B$ as follows, 
\begin{equation*}
D\phi(\ov{x})=
\begin{bmatrix}
\ov{\phi}_1^i & \ov{\phi}_2^i & \ov{\phi}_3^i\\[5pt]
\ov{\phi}_1^j & \ov{\phi}_2^j & \ov{\phi}_3^j\\
\end{bmatrix}
=
\begin{bmatrix}
b_{11} & b_{12} & b_{13}\\[5pt]
b_{21} & b_{22} & b_{23}\\
\end{bmatrix}
.
\end{equation*}
This allows to solve for the derivatives of the policy functions from \eqref{eq:stabmanif2}, yielding
\begin{align*}
\ov{\pi}_i':=\pi_i'(\ov{k})&=-\left(\frac{b_{13}b_{22} - b_{12}b_{23}}{b_{11}b_{22} - b_{12}b_{21}}\right) \quad \text{and} \quad \ov{\pi}_j':=\pi_j'(\ov{k})=-\left(\frac{b_{11}b_{23} - b_{13}b_{21}}{b_{11}b_{22} - b_{12}b_{21}}\right).
\end{align*}
Equivalently,
\begin{align*}
\ov{\pi}_i'&=\frac{\left[\ov{F}_3^i+\ov{E}^{ij}(\ov{\lambda}_1)\ov{F}_3^j\right](\ov{f}'-\ov{\lambda}_2)-\ov{E}^{ij}(\ov{\lambda}_1)\left[\ov{F}_3^i\ov{E}^{ji}(\ov{\lambda}_2)+\ov{F}_3^j\right](\ov{f}'-\ov{\lambda}_1)}
{(\ov{f}'-\ov{\lambda}_1)(\ov{f}'-\ov{\lambda}_2)\left[1-\ov{E}^{ij}(\ov{\lambda}_1)\ov{E}^{ji}(\ov{\lambda}_2)\right]},
\\[5pt]
\ov{\pi}_j'&=\frac{\left[\ov{F}_3^i\ov{E}^{ji}(\ov{\lambda}_2)+\ov{F}_3^j\right](\ov{f}'-\ov{\lambda}_1)-\ov{E}^{ji}(\ov{\lambda}_2)\left[\ov{F}_3^i+\ov{E}^{ij}(\ov{\lambda}_1)\ov{F}_3^j\right](\ov{f}'-\ov{\lambda}_2)}
{(\ov{f}'-\ov{\lambda}_1)(\ov{f}'-\ov{\lambda}_2)\left[1-\ov{E}^{ij}(\ov{\lambda}_1)\ov{E}^{ji}(\ov{\lambda}_2)\right]},
\end{align*}
where 
\begin{align*}
\ov{E}^{ij}(\ov{\lambda}_1):=\frac{\ov{F}_1^i-\ov{F}_2^i-\ov{\lambda}_1}{\ov{F}_1^j-\ov{F}_2^j-\ov{\lambda}_1}
\quad \text{and} \quad
\ov{E}^{ji}(\ov{\lambda}_2):=\frac{\ov{F}_1^j-\ov{F}_2^j-\ov{\lambda}_2}{\ov{F}_1^i-\ov{F}_2^i-\ov{\lambda}_2}.
\end{align*}
Note that an appropriate set of conditions must be imposed on the model's parameters for all previous values to be well defined, in particular $f' - \ov{\lambda}_1 \neq 0$, $f' - \ov{\lambda}_2 \neq 0$, $\ov{F}_1^j-\ov{F}_2^j-\ov{\lambda}_1 \neq 0$, $\ov{F}_1^i-\ov{F}_2^i-\ov{\lambda}_2 \neq 0$, and 
$(\ov{F}_1^j-\ov{F}_2^j-\ov{\lambda}_1)(\ov{F}_1^i-\ov{F}_2^i-\ov{\lambda}_2)-(\ov{F}_1^i-\ov{F}_2^i-\ov{\lambda}_1)(\ov{F}_1^j-\ov{F}_2^j-\ov{\lambda}_2) \neq 0$.

\section{Markov Strategies: Details and Proofs}
\label{sec:appM}

\subsection{Proof of Proposition \ref{prp:kstardef}}

Let $g$ be a continuous function satisfying \eqref{eq:Psi}. If $g(k)$ has a stationary point $k^* > 0$, then
\begin{align*}
\Psi\left[k^*,g(k^*),g^2(k^*)\right]=\Psi(k^*,k^*,k^*)=0.
\end{align*}
But then, \eqref{eq:hdef1} and \eqref{eq:hdef2} imply
\begin{equation*}
k^*=g(k^*)=f(k^*)-G^i(k^*,k^*)-G^j(k^*,k^*).
\end{equation*}
Given that the strategies $G^i$, $G^j$ solve the first-order conditions \eqref{eq:focM}, and taking into account that $U^i,U^j > 0$ for all $k \in (0,k_m]$,
\begin{align*}
\alpha_i\left(G^i(k^*,k^*)\right)[f'(k^*)-G_1^j(k^*,k^*)]&=1,\\[5pt]
\alpha_i\left(G^i(k^*,k^*)\right)[f'(k^*)-G_1^j(k^*,k^*)]&=1,
\end{align*}
where $f'(k^*)-G_1^i(k^*,k^*)>0$, $i=1,2$, holds from optimality conditions. \qed

\subsection{Proof of Lemma \ref{lem:m1m2def}}

Given that $P(\lambda) < 0$ for any $\lambda < \lambda_1^*$,
it follows from \eqref{eq:poly2} that
$\Psi_1^*+\Psi_3^*\,\lambda^2 < -\Psi_2^*\,\lambda$,
and, since $\Psi_2^*>0$, it also follows that
\begin{align*}
H_1^*+H_2^*\,\lambda^2 = -\frac{1}{\Psi_2^*}\left(\Psi_1^*+\Psi_3^*\lambda^2\right) & > \lambda.
\end{align*}
Denote by $Q(\lambda)$ the quadratic polynomial on the left-hand side of the above equality. Note that $Q(\lambda_1^*)=\lambda_1^* < 0$, $Q(-\lambda_2^*) > 0$, and $Q$ is strictly increasing for every $\lambda < \lambda_1^*$. Then, there is a unique root for $Q(\lambda)=0$ in $(-\lambda_2^*,\lambda_1^*)$, which is $-\left(|\lambda_1^*\lambda_2^*|\right)^{-1/2}$. This value can be less or greater than $-1$, depending on the parameters. Hence the lower bound is set to
\begin{equation*}
\unwt{\lambda}:= \max\left\{-1,-(|\lambda_1^*\lambda_2^*|)^{-1/2}\right\},
\end{equation*}
so that $\lambda < Q(\lambda) < 0$ for every $\lambda \in (\unwt{\lambda},\lambda_1^*)$. The existence of $m_1 > H_1^*$ and $m_2 > H_2^*$ satisfying $\lambda < Q(\lambda) < m_1 + m_2 \lambda^2 < 0$ on that interval follows from continuity. This completes the proof. \qed

\subsection{Proofs ommited from Theorem \ref{thm:fixedpT}}

In order to prove the continuity of $T$, we need a preliminary result.

\begin{lemm}
\label{lem:hn2converg}
Let $\{h_n\}_{n \in \mathbb{N}}$ be a sequence of functions in $\upwt{D}_{|\lambda|}(I_\varepsilon)$ converging to $h$. Then, the sequence $h^2_n(k):=h(h_n(k))$ converges uniformly on $I_\varepsilon$ to $h^2(k):=h(h(k))$.
\end{lemm}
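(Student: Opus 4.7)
The plan is to reduce the claim to a one-line Lipschitz estimate, after taking care of two small bookkeeping issues: (i) that the compositions $h(h_n(k))$ and $h(h(k))$ actually make sense on $I_\varepsilon$, and (ii) that the limit $h$ inherits the Lipschitz constant $|\lambda|$ from the approximating sequence.

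First I would verify that every element of $\upwt{D}_{|\lambda|}(I_\varepsilon)$ maps $I_\varepsilon$ into itself. For any $h_n \in \upwt{D}_{|\lambda|}(I_\varepsilon)$ and any $k \in I_\varepsilon$, using $h_n(k^*)=k^*$ and the Lipschitz bound,
\[
|h_n(k)-k^*| = |h_n(k)-h_n(k^*)| \leq |\lambda|\,|k-k^*| \leq |\lambda|\,\varepsilon < \varepsilon,
\]
so $h_n(k)\in I_\varepsilon$. Hence $h\circ h_n$ is well defined on $I_\varepsilon$. Next, I would note that $\upwt{D}_{|\lambda|}(I_\varepsilon)$ is closed in $(C(I_\varepsilon),\|\cdot\|_\infty)$: uniform convergence preserves the fixed-point condition $h(k^*)=k^*$, preserves monotonicity (being nonincreasing is a pointwise property stable under pointwise limits), and preserves the Lipschitz inequality $|h(k)-h(k')|\leq |\lambda|\,|k-k'|$ by passing to the limit in $n$. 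Consequently $h\in \upwt{D}_{|\lambda|}(I_\varepsilon)$ and in particular $h$ is Lipschitz on $I_\varepsilon$ with constant $|\lambda|$.

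With these two observations in hand, the estimate is immediate. For every $k\in I_\varepsilon$,
\[
\bigl|h(h_n(k))-h(h(k))\bigr| \leq |\lambda|\,\bigl|h_n(k)-h(k)\bigr| \leq |\lambda|\,\|h_n-h\|_\infty.
\]
Taking the supremum over $k\in I_\varepsilon$ gives $\|h\circ h_n - h\circ h\|_\infty \leq |\lambda|\,\|h_n-h\|_\infty$, and since $|\lambda|<1$ is fixed and $\|h_n-h\|_\infty \to 0$ by hypothesis, we conclude $h(h_n(\cdot)) \to h(h(\cdot))$ uniformly on $I_\varepsilon$.

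There is really no substantive obstacle here; the only point at which care is needed is the closedness of $\upwt{D}_{|\lambda|}(I_\varepsilon)$ under uniform limits, which is what transfers the Lipschitz constant from the sequence to $h$ and makes the one-line contraction-type estimate applicable. Everything else is a direct consequence of uniform convergence in $C(I_\varepsilon)$.
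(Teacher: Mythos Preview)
Your proof is correct and, in fact, somewhat cleaner than the paper's. The paper argues as follows: it first invokes the Arzel\`a--Ascoli theorem (the family $\upwt{D}_{|\lambda|}(I_\varepsilon)$ being uniformly bounded and equicontinuous) to conclude that $h_n \to h$ uniformly, then uses that $h$ is continuous on the compact set $I_\varepsilon$, hence uniformly continuous, and finishes with the standard $\varepsilon$--$\delta$ chaining: for every $\varepsilon>0$ pick $\delta>0$ from uniform continuity of $h$, then $N$ from uniform convergence so that $|h_n(k)-h(k)|<\delta$, hence $|h(h_n(k))-h(h(k))|<\varepsilon$.

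Your route is different in two respects. First, you bypass Arzel\`a--Ascoli entirely; since convergence in $C(I_\varepsilon)$ with the sup norm is already uniform by hypothesis, that appeal is redundant in the paper. Second, instead of falling back on mere uniform continuity of $h$, you exploit the full Lipschitz structure of the space: you show $\upwt{D}_{|\lambda|}(I_\varepsilon)$ is closed under uniform limits, so $h$ itself is $|\lambda|$-Lipschitz, and then the single quantitative estimate $\|h\circ h_n - h\circ h\|_\infty \le |\lambda|\,\|h_n-h\|_\infty$ does the job. This is more elementary and gives an explicit contraction-type bound that the paper's qualitative argument does not. You also make explicit the self-mapping property $h_n(I_\varepsilon)\subset I_\varepsilon$ needed for the composition to be well defined, which the paper leaves implicit.
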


\begin{proof}
Since the family of functions in $\upwt{D}_{|\lambda|}(I_\varepsilon)$ is uniformly bounded and equicontinuous, by the Arzel\`{a}-Ascoli theorem, $h_n$ converges uniformly on $I_\varepsilon$. Given that $h$ is continuous, hence uniformly continuous, for every $\varepsilon > 0$, there is a $\delta > 0$ such that    $k,k' \in I_\varepsilon$ with $|k-k'|< \delta$ implies $|h(k)-h(k')|< \varepsilon$. On the other hand, there is a positive integer $N$ such that $|h_n(k) - h(k)|<\delta$ for all $n>N$ and all $k \in I_\varepsilon$. The combination of both results immediately implies that for every $\varepsilon > 0$, there is some $N$ such that $|h(h_n(k))-h(h(k))| < \varepsilon$ for all $n > N$ and all $k \in I_\varepsilon$. Hence $h^2_n(k)$ converges uniformly on $I_\varepsilon$ to $h^2(k)$.
\end{proof}

\begin{lemm}
\label{lem:Tcontin}
Let $\lambda \in (\lambda_2^*,\upwt{\lambda})$ and suppose  $I_\varepsilon$ is defined as in Lemma \ref{lem:m1m2def}. Then, the operator $T:\upwt{D}_{|\lambda|}(I_\varepsilon) \to \upwt{D}_{|\lambda|}(I_\varepsilon)$ is continuous in the sup norm.
\end{lemm}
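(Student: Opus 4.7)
The plan is to show that whenever a sequence $\{h_n\} \subset \upwt{D}_{|\lambda|}(I_\varepsilon)$ converges to $h$ in the sup norm, the image sequence $\{Th_n\}$ converges to $Th$ in the sup norm. Since $Th(k) = H(k, h^2(k))$, the difference $Th_n(k) - Th(k)$ only depends on the second argument of $H$, namely the discrepancy between $h_n^2(k)$ and $h^2(k)$. Thus the argument naturally splits into two pieces: (i) control $|H(k,z) - H(k,z')|$ by $|z - z'|$ via a Lipschitz estimate on $H$, and (ii) control $\|h_n^2 - h^2\|_\infty$ by $\|h_n - h\|_\infty$ via the preceding convergence lemma.

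For step (i), recall from the construction in Lemma \ref{lem:m1m2def} and the continuity of the partials $H_1, H_2$ on $I$ that there exist constants $m_1, m_2$ and an $\varepsilon > 0$ with
\begin{equation*}
|H_2(x,z)| \leq m_2, \qquad \text{for all } (x,z) \in I_\varepsilon \times I_\varepsilon.
\end{equation*}
Since $\upwt{D}_{|\lambda|}(I_\varepsilon)$ consists of functions mapping $I_\varepsilon$ into $I_\varepsilon$ (because $h(k^*) = k^*$ and $h$ is Lipschitz with constant $|\lambda| < 1$, so the image cannot escape $I_\varepsilon$), the values $h^2(k)$ and $h_n^2(k)$ both lie in $I_\varepsilon$. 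The fundamental theorem of calculus applied to $z \mapsto H(k,z)$ then yields
\begin{equation*}
|Th_n(k) - Th(k)| = |H(k,h_n^2(k)) - H(k,h^2(k))| \leq m_2\,|h_n^2(k) - h^2(k)|,
\end{equation*}
uniformly in $k \in I_\varepsilon$.

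For step (ii), I will invoke Lemma \ref{lem:hn2converg}, which asserts precisely that $h_n^2 \to h^2$ uniformly on $I_\varepsilon$ whenever $h_n \to h$ in $\upwt{D}_{|\lambda|}(I_\varepsilon)$. Combining with the Lipschitz bound above gives
\begin{equation*}
\|Th_n - Th\|_\infty \leq m_2\,\|h_n^2 - h^2\|_\infty \longrightarrow 0,
\end{equation*}
which is the required continuity of $T$ in the sup norm.

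The only genuinely nontrivial ingredient is the uniform convergence $h_n^2 \to h^2$, which is handled by the preceding lemma and relies on equicontinuity of the family $\upwt{D}_{|\lambda|}(I_\varepsilon)$ (inherited from the common Lipschitz constant) together with uniform continuity of the limit $h$. Beyond that, the argument is a routine one-line application of the mean value theorem combined with the boundedness of $H_2$ on the compact square $I_\varepsilon \times I_\varepsilon$; no delicate estimate is required and the proof should be quite short.
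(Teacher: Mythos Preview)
Your proposal is correct and follows essentially the same approach as the paper: sequential continuity via a Lipschitz bound on $H$ in its second argument, combined with uniform convergence of $h_n^2$ to $h^2$. The only cosmetic difference is that the paper, instead of merely citing Lemma~\ref{lem:hn2converg}, inserts the explicit estimate $|h_n^2(k)-h^2(k)| \leq |\lambda|\,|h_n(k)-h(k)|$ (recall the paper's convention $h_n^2(k):=h(h_n(k))$) to obtain the quantitative bound $\|Th_n-Th\| \leq m_2'|\lambda|\,\|h_n-h\|$, from which an $\varepsilon$--$\delta$ statement is read off directly.
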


\begin{proof}
Let $h_n$ be a sequence in $\upwt{D}_{|\lambda|}(I_\varepsilon)$ that converges to $h$. By a similar argument given in the main text, there exists $m_2' > 0$ such that
\begin{align*}
\left|(Th_n)(k)-(Th)(k)\right|&=\left|H[k,h_n^2(k)]-H[k,h^2(k)]\right| \\[5pt]
&\leq m_2'\left|h_n^2(k)-h^2(k)\right|\\[5pt]
& \leq m_2'|\lambda|\left|h_n(k)-h(k)\right|,
\end{align*}
for all $n$ and all $k \in I_\varepsilon$. Then, for some $0 < \delta < \varepsilon/(m_2'|\lambda|)$, if $\Vert h_n-h\Vert < \delta$, we have
\begin{align*}
\left\Vert Th_n-Th \right\Vert &= \sup_{k \in I_\varepsilon}\left|H[k,h_n^2(k)]-H[k,h^2(k)]\right|\\[5pt]
& \leq m_2'|\lambda|\left|h_n(k)-h(k)\right|\\[5pt]
& \leq m_2'|\lambda|\delta < \varepsilon,
\end{align*}
which proves that $T$ is continuous.
\end{proof}

\bibliographystyle{chicago}
\bibliography{SGRP}

\end{document}